\renewcommand{\Re}{\operatorname{Re}}
\renewcommand{\Im}{\operatorname{Im}}
\newcommand{\defeq}{\stackrel{\rm{def}}{=}}
\newtheorem{theorem}{Theorem}[section]
\newtheorem{proposition}{Proposition}[section]
\newtheorem{lemma}[proposition]{Lemma}
\newtheorem{corollary}[proposition]{Corollary}
\theoremstyle{remark}
\newtheorem{remark}[proposition]{Remark}
\numberwithin{equation}{section}
\newcommand{\ds}{\displaystyle}
\title[Blow-up in supercritical NLS]
{Blow-up dynamics in the mass \\ super-critical NLS equations}
\author[Kai Yang]{Kai Yang}
\address{Department of Mathematics  \& Statistics\\Florida International University,  Miami, FL, USA}
\curraddr{}
\email{yangk@fiu.edu}
\thanks{} 
\author[Svetlana Roudenko]{Svetlana Roudenko}
\address{Department of Mathematics \& Statistics\\Florida International University,  Miami, FL, USA}
\curraddr{}
\email{sroudenko@fiu.edu}
\thanks{}
\author[Yanxiang Zhao]{Yanxiang Zhao}
\address{Department of Mathematics\\George Washington University,  Washington, DC, USA}
\curraddr{}
\email{yxzhao@gwu.edu}
\thanks{}
\subjclass[2010]{35Q55, 35Q40, 65M70, 65N35}
\keywords{NLS equation, blow-up dynamics, super-critical collapse, dynamic rescaling method, multi-bump profiles}
\begin{document}

\begin{abstract}
We study stable blow-up dynamics in the $L^2$-supercritical nonlinear Schr\"{o}dinger equation with radial symmetry in various dimensions. We first investigate the profile equation and extend the result of Wang \cite{Wang1990} and Budd et al. \cite{BCR1999} on the existence and local uniqueness of solutions of the cubic profile equation to other $L^2$-supercritical nonlinearities and dimensions $d \geq 2$. We then numerically observe the multi-bump structure of such solutions, and in particular, exhibit the $Q_{1,0}$ solution, a candidate for the stable blow-up profile.  
Next, using the dynamic rescaling method, we investigate stable blow-up solutions in the $L^2$-supercritical NLS and confirm the square root rate of the blow-up as well as the convergence of blow-up profiles to the $Q_{1,0}$ profile.   
\end{abstract}

\maketitle


\section{Introduction}

Consider the Cauchy problem of the nonlinear Schr\"odinger (NLS) equation
\begin{align}\label{NLS}
\begin{cases}
iu_t+\Delta u + |u|^{p-1}u=0, \qquad (t,x) \in [0,T)\times \mathbb{R}^d \\
u(x,0)=u_0 \in H^s(\mathbb{R}^d), s \geq 1.   
\end{cases}
\end{align}
During their lifespan, the solutions $u(x,t)$ of the Cauchy problem \eqref{NLS} conserve mass and energy (or Hamiltonian):
\begin{align} \label{conservation}
&~  M[u(t)] \defeq \int |u(x,t)|^2 \, dx = M[u_0], \\
&~  E[u(t)] \defeq \frac{1}{2}\int|\nabla u(x,t)|^2 \, dx -\frac{1}{p+1}\int |u(x,t)|^{p+1} \, dx = E[u_0]. 
\end{align}
(The momentum is also conserved, however, we omit it due to the radial setting.)

We are interested in the $L^2$-supercritical case of the equation \eqref{NLS}, for that we recall the scaling index and scaling invariance. If $u(x,t)$ solves \eqref{NLS}, then so does $\tilde{u}(x,t)=\lambda^{\frac{2}{p-1}}u(\lambda x, \lambda^2 t)$. A direct calculation shows that the following $\dot{H}^{s_c}$ norm is invariant under the above scaling, i.e., 
$\|u\|_{\dot{H}^{s_c}}=\| \tilde{u} \|_{\dot{H}^{s_c}}$,
with the critical index defined by
\begin{equation}\label{criticality 0}
s_c=\frac{d}{2}-\frac{2}{p-1}.
\end{equation}
The equation \eqref{NLS} is classified as 
\begin{itemize}
\item $L^2$-subcritical (or mass-subcritical) if $s_c<0$;
\item $L^2$-critical (or mass-critical) if $s_c=0$;
\item intercritical (or mass-supercritical and energy-subcritical) if $0<s_c<1$;
\item $\dot{H}^1$-critical (or energy-critical) if $s_c=1$;
\item $\dot{H}^1$-supercritical (or energy-supercritical) if $s_c>1$.
\end{itemize}

The well-posedness of solutions to the equation \eqref{NLS} has been long investigated starting with works by Ginibre and Velo in \cite{GV1979aa}, see also \cite{Ca2003} and references therein. We discuss separately cases $s_c \leq 1$ and $s_c >1$. When $s_c \leq 1$, the local well-posedness is available in $H^1(\mathbb{R}^d)$, implying that 
for $u_0 \in H^1(\mathbb{R}^d)$ there exists $0<T\leq \infty$ such that there is a unique solution $u(t) \in \mathbb{C}([0,T), H^1(\mathbb{R}^d))$. We say the solution exists globally in time if $T=\infty$, and the solution blows up in finite time if $T < \infty$ and $\limsup\limits_{t\,\nearrow\, T} \|\nabla u(t)\|_{L^2}=\infty$.
When $s_c>1$, taking the Schwartz class initial data $u_0 \in \mathcal{S}(\mathbb{R}^d)$, local well-posedness is obtained in $H^s$ with $s>s_c$. As  discussed in \cite{HR2007}, solutions of the equation \eqref{NLS} may $H^s$-blow up at time $T^*$, that is $\lim_{t \rightarrow T^*} \|u(t)\|_{\dot{H}^s} = \infty$, and persistence of regularity yields that $T^*$ is unique with respect to the norms $H^{\tilde s}$, $\tilde s > s_c$. Hence, while the local $H^1$ theory is absent in this case, there is still a clear distinction between global solutions ($T=\infty$) and finite-time blow-up solutions ($T<\infty$) for $s_c>1$.  

Solutions to the equation \eqref{NLS} when $s_c \geq 0$ may blow-up in finite time. A typical argument to show the existence of such blow-up solutions is the convexity argument for the negative energy initial data ($E[u_0]<0$) with finite variance ($V[u_0]\defeq \int |xu_0|^2 <\infty$). 
When $s_c<0$, solutions to \eqref{NLS} exist globally in time, for example, see \cite{We1989}. The first attempt 
to study stable blow-up solutions was in the mass-critical case ($s_c=0$), where there has been good progress in both analytical and numerical descriptions, see \cite{SS1999}, \cite{Pe2001}, \cite{FP1998}, \cite{FP2000},\cite{FGW2005},\cite{FGW2007}, \cite{F2015}, \cite{RYZ2017}, and references therein. In this case, the dynamics of finite-time  stable blow-up is described as follows: solutions have the ``log-log" blow-up rate
\begin{align*}
\| \nabla u(t) \|_{2} \sim  \left(\dfrac{\ln\ln(\frac{1}{T-t})}{T-t}\right)^{\frac{1}{2}} \quad \mathrm{as} \,\, t \rightarrow T, 
\end{align*}
for example, see \cite{MPSS1986}, \cite{LPSS1988}, \cite{ADKM2003}, \cite{FMR2006}, \cite{MR2005}, \cite{Pe2001b}, \cite{Me1993}, \cite{Ra2005}, \cite{MR2003}, \cite{RYZ2017}; and the blow-up profiles are given by the (unique positive) ground state solution of the nonlinear elliptic equation
\begin{equation}
\label{Q-critical}
-Q+\Delta Q + |Q|^{p-1} Q = 0, \qquad Q \in H^1(\mathbb{R}^d).
\end{equation}

In this paper, we investigate stable blow-up solutions in the mass-supercritical case $s_c>0$.  
To be specific, we consider radial solutions $u(r,t)$ to the equation \eqref{NLS}, $r=|x|, x \in \mathbb{R}^d$. For consistency with previous work (e.g., \cite{MPSS1986}, \cite{LPSS1988}, \cite{SS1999}), we set $\sigma=\frac{p-1}{2}$ and write the nonlinear term as $|u|^{2\sigma} u$; note that in this notation  $s_c = \frac{d}{2}-\frac{1}{\sigma}$. 
Due to the scaling invariance we can rescale solutions to the NLS equation in the radial setting with a scaling function $L(t)$ (e.g., $L(t)=1/\|u(t)\|_{\infty}^{\sigma}$, for other options see \cite{LePSS1987}, \cite{LPSS1988})
\begin{align}\label{rescaling initial}
u(r,t)=\frac{1}{L(t)^{\frac{1}{\sigma}}}\,v(\xi, \tau), \quad \mbox{where} \quad \xi=\frac{r}{L(t)}, \quad \tau=\int_0^t\frac{ds}{L(s)^2}.
\end{align}
Then the equation \eqref{NLS} becomes
\begin{align}\label{DRNLS}
iv_{\tau}+ia(\tau)\left(\xi v_{\xi}+\frac{v}{\sigma}\right)+\Delta v + |v|^{2\sigma}v=0,
\end{align}
where
\begin{align}\label{E:a}
a(\tau)=-L\frac{dL}{dt}=-\frac{d \ln L}{d\tau}.
\end{align}
The second term in \eqref{DRNLS}, containing $a(\tau)$, makes a fundamental difference in the blowup behavior between the mass-critical and supercritical cases, this is due to the limiting behavior of $a(\tau)$. 
As discussed, for example, in \cite{SS1999} or \cite{Za1984}, the stable blow-up dynamics will correspond to $a(\tau) \rightarrow a$, a constant, as the rescaled time $\tau \to \infty$. 
If $a(\tau)$ converges to zero\footnote{For example, in \cite{LPSS1988}, \cite{SS1999}, \cite{RYZ2017} the convergence of $a(\tau)$ is at the rate $1/(\ln\tau+3\ln \ln \tau)$. 
}, then stable singular solutions to the equation \eqref{NLS} blow-up at the square root rate with the ``log-log" correction (and that happens exactly in the $L^2$-critical case, see, \cite{LePSS1987}, \cite{LePSS1988}, \cite{LPSS1988}, \cite{SS1999}, \cite{F2015}, \cite{RYZ2017}). If $a(\tau)$ converges to a non-zero constant, then solutions to \eqref{NLS} blow up at the square root rate without any correction in the leading term. This characterizes the $L^2$-supercritical stable blow-up, see \cite{LPSS1988}, \cite{SS1999}, \cite{F2015}). 
Therefore, understanding the behavior of $a(\tau)$ sheds light onto the {\it rate} of the stable blow-up solutions in the equation \eqref{NLS}.

A more challenging task is to understand and describe the blow-up profiles. 
Following the setting of Zakharov \cite{Za1984} (see also details in \cite{ZK1986}) 
$v(\xi,\tau) = e^{i \tau} Q(\xi)$ and 
considering $a(\tau) \rightarrow a$ as $\tau \rightarrow \infty$, we obtain
\begin{equation}\label{Q-super} 
\Delta_{\xi} Q -Q+ia\left( \dfrac{Q}{\sigma}+\xi Q_{\xi} \right) +|Q|^{2\sigma}Q=0.
\end{equation}
We refer to this equation as the {\it profile} equation. 
The desired solutions of \eqref{Q-super} satisfy 
\begin{equation}\label{Q-BC}
Q_{\xi}(0)=0,\qquad Q(0)=\mathrm{real}, \quad \mbox{and} \quad Q(\infty)=0.
\end{equation}
Accordingly, we investigate the profile equation \eqref{Q-super} with conditions \eqref{Q-BC}. 

Note that in the critical setting ($s_c=0$), the value $a=0$, and thus, the equation \eqref{Q-super} is reduced to \eqref{Q-critical}, the ground state equation. 

As predicted by Zakharov in \cite{Za1984} (see also \cite{LePSS1988}), 
stable blow-up solutions are expected to be of the self-similar form  
\begin{align}\label{u blow-up}
u(x,t) = \dfrac{1}{L(t)^{\frac{1}{\sigma}}} Q\left(\frac{x}{L(t)}\right) \exp \left({i \theta + \frac{i}{2a}\log \frac{T}{T-t}} \right),
\end{align}
where 
\begin{align}\label{L T}
L(t)=(2a(T-t))^{1/2}.
\end{align}


The existence theory for such solutions $Q$ was first shown by X.-P. Wang in his thesis \cite{Wang1990} for the 3d cubic case, he also showed that in the cubic case $Q(x)$ decays as $1/|x|$ when $|x| \to \infty$. 
Later in \cite{BCR1999}, Budd, Chen and Russell, using the Volterra integral equation theory, extended the existence results for the cubic NLS cases in the inter-critical regime, i.e., when $0 <s_c<1$ or $2<d<4$. They also showed the $1/|x|$ decay of such solutions $Q$ in the cubic case. 

The first numerical evidence of monotone decreasing solutions to the profile equation \eqref{Q-super} for the 3d cubic and 2d quintic NLS cases was given in \cite{LePSS1988a}. There it was also shown that the asymptotic behavior of solutions to \eqref{Q-super}--\eqref{Q-BC} (for any $\sigma >0$), in the form of \eqref{u blow-up}, includes for $Q(\xi)$ two linearly independent solutions for large $\xi$:
$$
Q_1 \approx |\xi|^{-\frac{i}{a}-\frac{1}{\sigma}} \quad \mbox{and} \quad Q_2 \approx e^{-\frac{ia\xi^2}{2}}|\xi|^{\frac{i}{a}-d+\frac{1}{\sigma}},
$$
and thus, $Q$ can be written as $Q=\alpha Q_1+\beta Q_2$ when $\xi \rightarrow \infty$ (with $\alpha, \beta \in \mathbb{C}$). 
Substituting the ansatz \eqref{u blow-up} into the formulas for conserved quantities, we obtain the following expressions for the mass and energy (here, $\omega_{d}$ - the surface area of the $d-1$-dimensional unit sphere):
\begin{align}
& M[u(t)]=L(t)^{2s_c} \, \omega_{d} \,\int_0^\infty |Q(\xi)|^2 \xi^{d-1}\,d\xi, \label{E:mass-u}\\
& E[u(t)]=L(t)^{2(s_c-1)} \, \frac{\omega_{d}}2 \, \int_0^{\infty} \left[ |\nabla Q(\xi)|^2 - \frac{1}{\sigma+1} |Q(\xi)|^{2(\sigma+1)} \right] \xi^{d-1} \,d \xi . 
\label{E:energy-u}
\end{align}
{For the profile solutions $Q$ and to simplify the notation (getting rid of $\omega_d$ and the factor $\frac12$ in \eqref{E:energy-u}), we denote the Hamiltonian of $Q$ as $H[Q]$,} or the integral in \eqref{E:energy-u} 
$$
H[Q] \defeq \, \int_0^{\infty} \bigg[ |\nabla Q(\xi)|^2 - \frac{1}{\sigma+1} |Q(\xi)|^{2(\sigma+1)} \bigg] \xi^{d-1} \,d \xi. 
$$
When $0<s_c<1$, the energy of the self-similar blow-up in \eqref{E:energy-u} must be finite, and hence, the Hamiltonian of $Q$ must be zero: $H[Q]=0$ (since $L(t) \to 0$ as $t \to T$). Similarly, the mass of $u $ in \eqref{E:mass-u} remains constant, and thus, the $L^2$ norm of $Q$ is unbounded. In summary, $Q =\alpha Q_1+\beta Q_2$ is in $\dot{H}^1$ but not in $L^2$. Though both $Q_1$ and $Q_2$ vanish at infinity, the fast oscillating span $Q_2$ should be excluded from consideration (or one has to choose only those parameters $Q(0)$ and $a$, which generate the solutions to \eqref{Q-super} with $\beta \equiv 0$). Thus, the next natural step is  to understand the $Q_1$-type solutions, or solution to \eqref{Q-super} with boundary conditions 
\begin{equation}\label{Q-BC2}
Q_{\xi}(0)=0,\qquad Q(0)=\mathrm{real}, \qquad Q(\infty)=0, \qquad H[Q]=0.
\end{equation}

As far as the uniqueness, the {\it local} uniqueness was shown in \cite{BCR1999}: for any given $Q(0) \in \mathbb{R}$ and constant $a>0$, the equation \eqref{Q-super}--\eqref{Q-BC2} has a unique solution. The obvious question then is,
which $Q(0)$ and $a$ produce solutions that would match the profiles of the stable blow-up solutions of \eqref{NLS} from generic initial data.

It should be mentioned that prior to \cite{BCR1999}, Kopell and Landman in \cite{KL1995} constructed such a unique profile $Q$ in the cubic case when the dimension $d$ is exponentially asymptotically close to $2$. (Using this result, Merle, Rapha\"el, and Szeftel in \cite{MRS2010} constructed stable blow-up solutions in the cubic case when $d \gtrapprox 2.$) In \cite{KR-2002} Rottsh\"{a}fer and Kaper improved the construction to include settings where the dimension $d$ is algebraically close to 2, i.e., $d(a)=2+O(a^l)$ for $l>0$. The physical case of the 3d cubic NLS, however, is in no way close to the dimension 2, consequently, the above perturbative approaches (the only ones currently available for $Q$ construction) leave the question of profile(s) $Q$ open.

LeMesurier, Papanicolaou, Sulem and Sulem in \cite{LePSS1988a}, while considering several generic initial conditions and solving the equation numerically via the dynamic rescaling method that they introduced, found that $a(\tau)$ tends to some specific constant, for example, in the 3d cubic NLS $a = 0.917...$; $Q(0)$ has also a specific value, $Q(0) = 1.885...$. They observed that the values of $a$ and $Q(0)$ are very sensitive to perturbations, even $4\%$ deviation would generate a nontrivial perturbation and would not generate a profile with non-oscillating tail. Despite such a sensitivity, it is remarkable that the authors in \cite{LePSS1988a} were able to identify the above parameters with such precision; furthermore, they investigated the case of the 2d quintic NLS and obtained $a = 1.533...$ and $Q(0)= 1.287...$ in that setting.

Budd, Chen and Russell  in \cite{BCR1999} numerically studied the cubic NLS in $2<d<4$, and found that the span of $Q_1$ solutions to the equation \eqref{Q-super}--\eqref{Q-BC2} is large: an infinite (at least a countable) number of distinct self-similar solutions (with monotone decay at infinity, or in other words, with non-oscillatory tails), which are characterized by the number of maxima of $|Q|$ when the dimension $d$ is close to 2, and thus, called `multi-bump' solutions. Among those solutions, only one monotonically decreasing solution, denoted by $Q_{1,0}$, is the actual profile of the {\it stable} blow-up (from the generic initial data) in the cubic NLS equation. It is indeed the only monotone solution when the dimension is asymptotically close to 2. As the dimension increases away from two, there may be other monotone solutions as was demonstrated for the cubic NLS in \cite{BCR1999}, we also show examples of several monotone solutions for other nonlinearities in Section \ref{sec: Qprofile}, see Figures \ref{Q3dp5}-\ref{Q3dp7}. Observe that the profile $Q_{1,0}$ is exactly the one found by LeMesurier, Papanicolaou, Sulem and Sulem in \cite{LePSS1988a}). 
We provide some more details about the 
profile solutions $Q_{J,K}$ in Section \ref{sec: Qprofile} and show examples; for further reading on the structure of profile solutions as well as estimates on the location and value of the maxima, see \cite{BCR1999}, \cite{Budd2002}; for further discussion on numerical treatments refer to \cite{BKW2006}, \cite{BHR2009}, \cite{ADKM2003}, \cite{SS1999}. An attempt to investigate multi-bump solutions analytically, via dynamical systems approach, was done by Rottsh\"{a}fer and Kaper in a very interesting paper \cite{KR-2003}. There, far range asymptotics was glued to the origin, resulting in the so-called midrange part, where non-monotone behavior is possible, and that led to a variety of multi-bump solutions. We also note that the papers \cite{KL1995}, \cite{KR-2002}, \cite{KR-2003} are the only analytical constructions of $Q$ in the mass-supercritical setting.

In this paper, we investigate mass-supercritical cases of the NLS equation, including energy-supercritical cases. We first show the existence and local uniqueness of solutions to the profile equation \eqref{Q-super}--\eqref{Q-BC}, including nonlinearities $p \neq 3$, and review the decay of $Q$ solutions. We then investigate the profile equation \eqref{Q-super} and study the multi-bump solutions $Q_{J,K}$ numerically for the specific powers of nonlinearities $p=3, 5, 7$ (or $\sigma = 1, 2, 3$) in a variety of dimensions (from two to five). Finally, using the dynamic rescaling method, we obtain stable blow-up solutions to the NLS equations and show that the rate of the blow-up is indeed given by \eqref{L T} and the blow-up profiles converge to the specific solution $Q_{1,0}$ of the profile equation \eqref{Q-super} with \eqref{Q-BC2}.

The paper is organized as follows: in Section \ref{S:2}, we discuss the existence and uniqueness theory for solutions of \eqref{Q-super}--\eqref{Q-BC} to all mass-supercritical cases for $d \geq 2$, as well as the decay of $Q$ as $|x| \to \infty$. In Section \ref{sec: Qprofile}, we provide a numerical method to obtain the solutions $Q$ to \eqref{Q-super} \& \eqref{Q-BC}, and in particular, numerically identify the profiles $Q_{1,0}$. We observe no difference in obtaining $Q$ solutions for $s_c<1$, $s_c=1$ and $s_c>1$, though these three cases lead, respectively, to the zero Hamiltonian, constant Hamiltonian and negative Hamiltonian solutions from our numerical observations (the negative Hamiltonian solutions are actually due to the finite interval, we show that the Hamiltonian decreases to negative infinity as the computational domain increases). In Section \ref{S:4}, we simulate the blow-up solutions by the dynamic rescaling method and show the results of convergence of blow-up solutions to the profiles $Q_{1,0}$ and the square root rate of the blow-up. We provide various error estimates 
between the blow-up rate and the predicted rate. All the error quantities are satisfactorily small (e.g., on the order $10^{-5}$). Such behaviors are observed among all considered cases with $0< s_c<1$, $s_c=1$ and $s_c>1$.

{\bf Acknowledgments.} S.R. was partially supported by the NSF CAREER grant DMS-1151618 as well as part of the K.Y.'s graduate research fellowship to work on this project came from the above grant. Y.Z. was partially supported by the Columbian College Facilitating Funds (CCFF 2018) and the Simons Foundation through grant No. 357963.

\section{Existence theory of $Q$}\label{S:2}
\subsection{Existence of $Q$ for general nonlinearity in radial setting.}

We start with the existence theory of the equation \eqref{Q-super}--\eqref{Q-BC}, which is derived from the equation \eqref{DRNLS} as a stationary solution by separation of variables $v(\xi,\tau)=e^{i\tau}Q(\xi)$ (recall $\xi$ and $\tau$ from \eqref{rescaling initial}). Denoting $\displaystyle\lim_{\tau \rightarrow \infty}a(\tau)=a$, we obtain
\begin{align}\label{Q eqn}
\begin{cases}
\Delta_{\xi} Q -Q+ia\left( \dfrac{Q}{\sigma}+\xi Q_{\xi} \right) +|Q|^{2\sigma}Q=0,\\
Q_{\xi}(0)=0,\qquad Q(0)=\mathrm{real}, \quad \mbox{and} \quad Q(\infty)=0.
\end{cases}
\end{align}
Here, $\sigma>0$, and $\Delta_{\xi}:=\partial_{\xi \xi}+\dfrac{d-1}{\xi} \partial_{\xi}$ denotes the Laplacian with the radial symmetry. 

This equation is the key to understanding the profiles for the stable blow-up solutions in the mass-supercritical case. In the cubic case ($\sigma=1$), the existence of solutions to \eqref{Q eqn} was shown in \cite{LPSS1988}, \cite{SS1999}, \cite{Wang1990} and \cite{BCR1999}. Here, we consider mass-supercritical cases with $d\geq 2$ and $p>1$, or $\sigma>0$. First, we discuss some useful properties of solutions, then we address the question of existence by incorporating the approach from \cite{BCR1999}, and close this section with an alternative proof of existence using the method from \cite{Wang1990}, which also gives decay estimates. 

\begin{lemma}\label{L: Q identity}
If $Q(\xi)$ is the solution of the equation (\ref{Q eqn}) in any dimension $d$, then, it satisfies the identities
\begin{align}\label{Q identity 1}
\left|\xi Q_{\xi}+\dfrac{Q}{\sigma}\right|^2 &+ 2\left( d-2-\dfrac{1}{\sigma} \right) \int_0^{\xi} s |Q_{s}|^2  +\left( 2-\dfrac{2}{\sigma}\right)\int_0^{\xi} s|Q|^2 \\
&-\frac{d-2}{\sigma}|Q(0)|^2 -\xi^2|Q|^2+\left( \dfrac{d-2}{\sigma}-\dfrac{1}{\sigma^2} \right)|Q(\xi)|^2 \nonumber\\
&+\dfrac{1}{\sigma+1}\xi^2|Q|^{2\sigma+2}+\dfrac{2}{\sigma (\sigma+1)}\int_0^{\xi}s|Q|^{2\sigma+2}=0,\nonumber
\end{align}
and 
\begin{align}\label{Q identity 2}
2\Im(\xi Q_{\xi}\bar{Q})+2(d-2)\Im \int_0^{\xi}Q_{s}\bar{Q}+2a\left( \dfrac{1}{\sigma}-1 \right)\int_0^{\xi}s|Q|^2+a|\xi|^2|Q|^2=0.
\end{align}
\end{lemma}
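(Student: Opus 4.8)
The plan is to obtain both identities as \emph{local} (truncated at radius $\xi$) Pohozaev/virial-type relations: multiply the profile equation in \eqref{Q eqn} by a suitably chosen multiplier, take either the imaginary or the real part, and integrate over $[0,\xi]$ against the planar weight $s\,ds$. The organizing remark is that the dilation term in \eqref{Q eqn} is $ia$ times the expression $P\defeq \xi Q_\xi+\frac{Q}{\sigma}$, whose squared modulus $|P|^2=|\xi Q_\xi+\frac{Q}{\sigma}|^2$ is precisely the leading term of \eqref{Q identity 1}. I will use two multipliers: $s\bar Q$, for which taking imaginary parts \emph{keeps} $a$ and yields the first-order identity \eqref{Q identity 2}; and $s\bar P$, for which taking real parts \emph{kills} $a$ because $\Re(ia\,s|P|^2)=0$, yielding the energy-type identity \eqref{Q identity 1}. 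Throughout I will freely use $\Re(Q_s\bar Q)=\tfrac12(|Q|^2)'$, $\Re(Q_{ss}\bar Q_s)=\tfrac12(|Q_s|^2)'$, and $\Im(Q_{ss}\bar Q)=\tfrac{d}{ds}\Im(Q_s\bar Q)$.

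For \eqref{Q identity 2} I would multiply the equation by $\bar Q$ and take the imaginary part. The real terms $-Q$ and $|Q|^{2\sigma}Q$ drop out, the Laplacian gives $\tfrac{d}{ds}\Im(Q_s\bar Q)+\tfrac{d-1}{s}\Im(Q_s\bar Q)$, and the dilation term gives $a\big(\tfrac1\sigma|Q|^2+\tfrac{s}{2}(|Q|^2)'\big)$. Multiplying this pointwise relation by $s$ and integrating from $0$ to $\xi$, the two derivative terms are integrated by parts via $\int_0^\xi s\,\tfrac{d}{ds}\Im(Q_s\bar Q)\,ds=\xi\,\Im(Q_\xi\bar Q)-\int_0^\xi\Im(Q_s\bar Q)\,ds$ and $\int_0^\xi s^2(|Q|^2)'\,ds=\xi^2|Q|^2-2\int_0^\xi s|Q|^2\,ds$. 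All boundary contributions at $s=0$ vanish since $Q_s(0)=0$. Collecting and multiplying by $2$ yields \eqref{Q identity 2}.

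For \eqref{Q identity 1} I would multiply the equation by $s\bar P=s^2\bar Q_s+\tfrac{s}{\sigma}\bar Q$ and take the real part, whereupon $a$ disappears. The remaining pieces split into a scaling part (multiplier $s^2\bar Q_s$) and a mass part (multiplier $\tfrac{s}{\sigma}\bar Q$). The scaling part of the Laplacian produces $\tfrac{s^2}{2}(|Q_s|^2)'+(d-1)s|Q_s|^2$, giving after integration by parts the boundary term $\tfrac{\xi^2}{2}|Q_\xi|^2$ together with $(d-2)\int_0^\xi s|Q_s|^2$; the mass part, via $\Re(Q_{ss}\bar Q)=\tfrac12(|Q|^2)''-|Q_s|^2$, produces $-\tfrac1\sigma\int_0^\xi s|Q_s|^2$, the cross boundary term $\tfrac{\xi}{\sigma}\Re(Q_\xi\bar Q)$, and the pointwise $|Q(\xi)|^2$ and $|Q(0)|^2$ contributions. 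Combining the scaling and mass boundary pieces reconstitutes $\tfrac12|P(\xi)|^2$ (since $|P|^2=\xi^2|Q_\xi|^2+\tfrac{2\xi}{\sigma}\Re(Q_\xi\bar Q)+\tfrac1{\sigma^2}|Q|^2$), the leftover $-\tfrac1{2\sigma^2}|Q(\xi)|^2$ being absorbed into the $|Q(\xi)|^2$ bookkeeping. The linear term $-Q$ contributes $-\tfrac{\xi^2}{2}|Q(\xi)|^2+(1-\tfrac1\sigma)\int_0^\xi s|Q|^2$, and the nonlinearity, using $|Q|^{2\sigma}(|Q|^2)'=\tfrac1{\sigma+1}(|Q|^{2\sigma+2})'$, contributes $\tfrac{\xi^2}{2(\sigma+1)}|Q|^{2\sigma+2}+\tfrac1{\sigma(\sigma+1)}\int_0^\xi s|Q|^{2\sigma+2}$. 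Summing and multiplying by $2$ gives \eqref{Q identity 1}.

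The main obstacle is the bookkeeping in \eqref{Q identity 1}: the mass part of the multiplier forces the appearance of $(|Q|^2)''$, so one needs a careful double integration by parts to assemble exactly the coefficients $\big(\tfrac{d-2}{\sigma}-\tfrac1{\sigma^2}\big)$ on $|Q(\xi)|^2$ and $-\tfrac{d-2}{\sigma}$ on $|Q(0)|^2$, and to verify that the scaling and mass boundary pieces recombine into the single square $|P(\xi)|^2$. One must also check that every boundary term at the origin vanishes; this is precisely where the condition $Q_\xi(0)=0$ from \eqref{Q eqn} is used, together with the $s$- and $s^2$-prefactors that suppress the would-be singular contributions of the radial Laplacian at $s=0$.
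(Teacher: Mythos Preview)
Your approach is correct and coincides with the paper's own proof: the paper obtains \eqref{Q identity 1} by multiplying the equation in \eqref{Q eqn} by $2\xi\big(\xi\bar Q_\xi+\overline{Q/\sigma}\big)$, taking the real part, and integrating on $[0,\xi]$, and obtains \eqref{Q identity 2} by multiplying by $2\xi\bar Q$, taking the imaginary part, and integrating --- exactly the multipliers $2s\bar P$ and $2s\bar Q$ in your write-up. The only difference is that you spell out the integrations by parts and the recombination into $|P(\xi)|^2$, whereas the paper leaves this as a one-line computation.
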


\begin{proof}
The identity (\ref{Q identity 1}) is obtained by multiplying the equation in (\ref{Q eqn}) by $2\xi (\xi \bar{Q}_{\xi}+\bar{\frac{Q}{\sigma}})$, taking the real part and then integrating from $0$ to $\xi$.  The identity (\ref{Q identity 2}) is obtained by multiplying the equation in (\ref{Q eqn}) by $2\xi \bar{Q}$, taking the imaginary parts and then integrating from $0$ to $\xi$. 
\end{proof}
In the next lemma we show the boundedness of solutions to \eqref{Q eqn}. Recall $s_c=\frac{d}{2}-\frac{1}{\sigma}$ and $\sigma >0$.

\begin{lemma}\label{L Q bound}
If $s_c>0$, $d > 1+\frac1{\sigma}$, and $Q$ 
is the $C^2$ solution of \eqref{Q eqn}, then $|Q(\xi)|$ is bounded {for all $\xi >0$}.
\end{lemma}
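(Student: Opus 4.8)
My plan is to argue by contradiction, with identity \eqref{Q identity 1} as the workhorse. The reason to prefer \eqref{Q identity 1} over \eqref{Q identity 2} or a naive energy functional is structural: in \eqref{Q identity 1} the focusing contribution $\frac{1}{\sigma+1}\xi^2|Q|^{2\sigma+2}$ and the only genuinely dangerous term $-\xi^2|Q|^2$ both carry the same weight $\xi^2$, so their competition is decided purely by the powers of $|Q|$. Writing their sum as $\xi^2|Q|^2\big(\tfrac{1}{\sigma+1}|Q|^{2\sigma}-1\big)$, one sees that once $|Q|^{2\sigma}>\sigma+1$ this combination is positive and, along any sequence $\xi_n$ with $|Q(\xi_n)|\to\infty$, diverges like $\xi_n^2|Q(\xi_n)|^{2\sigma+2}$. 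I would take $\xi_n$ to be points where $|Q|$ attains a new running maximum, so that $|Q(\xi_n)|\to\infty$ encodes the assumed failure of boundedness. The plan is then to show that every other term in \eqref{Q identity 1} is either bounded, of a favorable sign, or of strictly lower order than $\xi_n^2|Q(\xi_n)|^{2\sigma+2}$, contradicting the vanishing of the whole expression.

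Among the remaining terms, $\left|\xi Q_\xi+\tfrac{Q}{\sigma}\right|^2$ and $\tfrac{2}{\sigma(\sigma+1)}\int_0^\xi s|Q|^{2\sigma+2}$ are nonnegative, the term $-\tfrac{d-2}{\sigma}|Q(0)|^2$ is a fixed constant, and the pointwise term $\big(\tfrac{d-2}{\sigma}-\tfrac{1}{\sigma^2}\big)|Q(\xi_n)|^2$ is $O(|Q(\xi_n)|^2)$, hence negligible against the divergent quantity. The only terms that could conspire to balance the divergence are the weighted integrals $2\big(d-2-\tfrac1\sigma\big)\int_0^\xi s|Q_s|^2$ and $\big(2-\tfrac2\sigma\big)\int_0^\xi s|Q|^2$, and only when their coefficients are negative (i.e.\ when $d<2+\tfrac1\sigma$ or $\sigma<1$). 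For the second integral I would split $[0,\xi_n]$ according to whether $|Q|^{2\sigma}\le\sigma+1$ or not: on the first set $s|Q|^2$ is bounded by $Cs$ and contributes $O(\xi_n^2)$, while on the second set $s|Q|^2\le\tfrac{1}{\sigma+1}s|Q|^{2\sigma+2}$, so $\int_0^\xi s|Q|^2\le \tfrac{1}{\sigma+1}\int_0^\xi s|Q|^{2\sigma+2}+C\xi_n^2$. A short computation shows that this bound, combined with the existing $\tfrac{2}{\sigma(\sigma+1)}$-coefficient, still leaves a strictly favorable (growth-unfavorable) multiple of $\int_0^\xi s|Q|^{2\sigma+2}$, at the cost of a harmless $O(\xi_n^2)$ remainder.

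The main obstacle is the weighted Dirichlet integral $\int_0^\xi s|Q_s|^2$, whose coefficient $2\big(d-2-\tfrac1\sigma\big)$ is negative precisely in the interesting range $1+\tfrac1\sigma<d<2+\tfrac1\sigma$ (which includes the intercritical cubic case $2<d<3$). To control it I would derive a companion identity by multiplying the equation in \eqref{Q eqn} by $2s\bar Q$ and taking the real part (the real analogue of the computation producing \eqref{Q identity 2}); after integration by parts this expresses $\int_0^\xi s|Q_s|^2$ through $\int_0^\xi s|Q|^{2\sigma+2}$, $\int_0^\xi s|Q|^2$, boundary terms, and an imaginary current integral that is in turn controlled via \eqref{Q identity 2} and Cauchy--Schwarz. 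Substituting into \eqref{Q identity 1} reduces all weighted integrals to a single multiple of $\int_0^\xi s|Q|^{2\sigma+2}$ plus lower-order terms; here the hypothesis $d>1+\tfrac1\sigma$ enters decisively, since it forces $\big|d-2-\tfrac1\sigma\big|<1$ in the adverse regime, and this smallness is exactly what keeps the net coefficient of $\int_0^\xi s|Q|^{2\sigma+2}$ of the right sign. Once this bookkeeping is complete, the right-hand side of the rearranged \eqref{Q identity 1} is $O(\xi_n^2)+o\big(\xi_n^2|Q(\xi_n)|^{2\sigma+2}\big)$, which cannot match the divergent left-hand side; the contradiction yields a uniform bound on $|Q|$. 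On any finite interval boundedness is immediate from continuity of the $C^2$ solution, so the genuine content is precisely this uniform control as $\xi\to\infty$, and I expect the delicate constant-chasing in the companion-identity substitution to be the crux of the argument.
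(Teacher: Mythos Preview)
Your overall strategy---contradiction via identity \eqref{Q identity 1}, with the pairing $\xi^2|Q|^2\big(\tfrac{1}{\sigma+1}|Q|^{2\sigma}-1\big)$ as the driving divergent term---matches the paper's spirit, and your splitting argument for $(2-\tfrac{2}{\sigma})\int_0^\xi s|Q|^2$ is clean and correct. The gap is in the treatment of $\int_0^\xi s|Q_s|^2$ in the adverse range $1+\tfrac{1}{\sigma}<d<2+\tfrac{1}{\sigma}$.

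Multiplying \eqref{Q eqn} by $2s\bar Q$ and taking the real part does produce a companion identity, but it contains the term $-2a\int_0^\xi s^2\,\Im(Q_s\bar Q)\,ds$. Whether you unpack this via \eqref{Q identity 2}, the pointwise current formula, or Cauchy--Schwarz, you are led to quantities with weight $s^3$, for instance $\int_0^\xi s^3|Q|^2\,ds$. Along your running-maximum sequence this is only bounded by $\tfrac{1}{4}\xi_n^4|Q(\xi_n)|^2$, which need \emph{not} be $o\big(\xi_n^2|Q(\xi_n)|^{2\sigma+2}\big)$: if $|Q(\xi_n)|\to\infty$ slower than $\xi_n^{1/\sigma}$, the weight-$s^3$ term dominates. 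Separately, your coefficient claim is off: substituting the companion identity gives a net coefficient $\tfrac{2}{\sigma(\sigma+1)}+2(d-2-\tfrac{1}{\sigma})$ in front of $\int_0^\xi s|Q|^{2\sigma+2}$, and keeping this nonnegative requires $|d-2-\tfrac{1}{\sigma}|\le\tfrac{1}{\sigma(\sigma+1)}$, not merely $<1$. For $\sigma=1$ this already fails on $2<d<2.5$.

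The paper avoids both issues by reversing the order: it first shows $|Q_\xi|$ is bounded and only then deduces boundedness of $|Q|$. The trick is to take $\xi_j$ as running maxima of $|Q_\xi|$ (not of $|Q|$), which gives the elementary bound $\int_0^{\xi_j} s|Q_s|^2\,ds\le\tfrac{1}{2}\xi_j^2|Q_\xi(\xi_j)|^2$ directly, with no companion identity needed. After dividing \eqref{Q identity 1} by $\xi^2$, this term (with coefficient $2+\tfrac{1}{\sigma}-d$ on the right) competes head-to-head with the first term $\tfrac{1}{\xi_j^2}\big|\xi_j Q_\xi+\tfrac{Q}{\sigma}\big|^2\sim|Q_\xi(\xi_j)|^2$ on the left, and the hypothesis $d>1+\tfrac{1}{\sigma}$ enters precisely as $2+\tfrac{1}{\sigma}-d<1$, so the left wins. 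This is where the condition is sharp in the paper's argument; your sketch invokes the same numerical inequality but attaches it to the wrong competition.
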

\begin{proof}
First note that a $C^2$ solution to \eqref{Q eqn} is bounded on a finite interval $\xi \in [0,M]$ for any finite $M>0$. 
Therefore, it suffices to show boundedness of $Q$ for $\xi > M$, or as $\xi \rightarrow \infty$. For that we first rewrite the identity \eqref{Q identity 1} as follows
\begin{align}\label{Q identity 11}
&\dfrac{1}{\xi^2}\left| \xi\, Q_{\xi}+\frac{Q}{\sigma} \right| ^2
+|Q|^2\left( \frac{1}{\sigma+1}|Q|^{2\sigma}-1 \right)
+\frac{1}{\xi^2}\left(\frac{d-2}{\sigma}-\frac{1}{\sigma^2} \right)|Q|^2 \\
&\qquad\qquad\qquad 
+ \frac1{\xi^2} \left(2-\frac{2}{\sigma}\right)\int_0^{\xi}s\,|Q|^2
+\frac{2}{\sigma(\sigma+1)\xi^2} \int_0^{\xi}s\,|Q|^{2\sigma+2} \nonumber\\
=&\dfrac{1}{\xi ^2} \left(\frac{d-2}{\sigma}|Q(0)|^2+2\left(2+\frac{1}{\sigma}-d \right)\int_0^{\xi}s|Q_s|^2 \right). \nonumber
\end{align}
Our claim is that if $|Q_{\xi}|$ is bounded, then so is $|Q|$. 
To the contrary, suppose that $|Q|$ is not bounded, that is, $|Q(\xi)| \to \infty$ as $\xi \to \infty$. 
By combining the second and third terms in the left-hand side (LHS) of \eqref{Q identity 11}, we obtain 
$$
|Q|^2 \left[\frac{1}{\sigma+1}|Q|^{2\sigma}-1 + \frac{1}{\xi^2}(\frac{d-2}{\sigma}-\frac{1}{\sigma^2}) \right].
$$ 
Observe that since $|Q| \to \infty$ by our assumption, the first term with $|Q|^{2\sigma}$ in the square brackets is dominant (and grows as $\xi$ grows), and the last term is decreasing as $\xi^{-2}$ regardless of the sign in $\frac{d-2}{\sigma}-\frac{1}{\sigma^2}$, and thus, the whole expression is increasing to infinity as $\xi \to \infty$, so for sufficiently large $\xi$ it will certainly be positive. 
Adding the second line from \eqref{Q identity 11} and comparing it with the entire expression, we obtain for large enough $\xi$
\begin{align}\label{Q boundedness}
0  \leq\  & |Q|^2\left[\frac{1}{\sigma+1}|Q|^{2\sigma}-1
+\frac{1}{\xi^2}\left(\frac{d-2}{\sigma}-\frac{1}{\sigma^2}\right)\right]  \\
&+\frac{1}{\xi^2}\int_0^{\xi}s|Q|^2\left[ \frac{2}{\sigma(\sigma+1)}|Q|^{2\sigma} +\left(2-\frac{2}{\sigma}\right) \right]ds \nonumber\\
 \leq\ & \textrm{LHS of (\ref{Q identity 11})} \nonumber \\
 =\ & \textrm{RHS of (\ref{Q identity 11})}. \nonumber
\end{align}   
Now, the RHS of \eqref{Q identity 11} has two terms: the first term behaves as $\frac{c}{\xi^2}$ and the integral in the second term is on the order of $c \, \xi^2$ as $\xi \to \infty$ (recalling the hypothesis $|Q_{\xi}|$ being bounded). Hence, overall, the RHS is bounded by a constant when $\xi \rightarrow \infty$. This gives a contradiction as the left side in the inequality (\ref{Q boundedness}) grows to $\infty$ as $\xi \rightarrow \infty$, while the RHS remains bounded. Therefore, we conclude that $|Q|$ can not grow to infinity as $\xi \to \infty$ and has to be bounded provided $|Q_{\xi}|$ is bounded.

Next, we show that $|Q_{\xi}|$ is indeed bounded when $\xi \rightarrow \infty$. Again, we prove the boundedness of $|Q_{\xi}|$ by contradiction. 
Suppose $|Q_{\xi}|$ is not bounded, that is $\limsup_{\xi\rightarrow \infty}|Q_{\xi}(\xi)|=\infty$. Then, there exists a monotonically increasing sequence $\lbrace \xi_j\rbrace_0^{\infty}$ for both $\xi_j$ and $Q(\xi_j)$ such that $|Q_{\xi}(\xi_j)|\rightarrow \infty$ as $\xi_j \rightarrow \infty$, and $|Q_{\xi}(\xi_j)|>|Q_{\xi}(\xi_k)|$ for $j>k$. 

For the RHS of the identity \eqref{Q identity 11}, we first consider the case when $2+\frac{1}{\sigma}-d \leq 0$ (or $d \geq 2+\frac1{\sigma}$). Then the RHS is negative. On the other hand, the LHS of \eqref{Q identity 11}, is always positive. We consider separately the case when $Q$ is bounded and when $Q$ is unbounded.
If $Q$ is bounded, we move the ``possible negative terms" $\frac{1}{\xi^2}(\frac{d-2}{\sigma}-\frac{1}{\sigma^2})|Q|^2$ and $|Q|^2(\frac{1}{\sigma+1}|Q|^{2\sigma}-1)$ in (\ref{Q identity 11}) to the RHS and then put the prior bound of $|Q|$ on the terms $\frac{1}{\xi^2}(\frac{d-2}{\sigma}-\frac{1}{\sigma^2})|Q|^2$ and $|Q|^2(\frac{1}{\sigma+1}|Q|^{2\sigma}-1)$. Then, these two terms will be absorbed by a constant $c$ and the RHS will still be negative by choosing sufficiently large $j$. If $Q$ is not bounded, then all the terms in the LHS of the identity (\ref{Q identity 11}) are positive. In both cases, we obtain the identity with strictly positive LHS and strictly negative RHS for some sufficiently large $\xi_j$. Therefore, we reach a contradiction in the case $2+\frac{1}{\sigma}-d \leq 0$ as $\xi \rightarrow \infty$.

Now we consider the case $2+\frac{1}{\sigma}-d > 0$ (or $1+\frac1{\sigma} < d < 2+ \frac1{\sigma}$). In this case, the RHS of the identity (\ref{Q identity 11}) satisfies
\begin{align}\label{Q bound 1}
c+\dfrac{2(2+\frac{1}{\sigma}-d)}{\xi_j^2}\int_0^{\xi_j}s|Q_s(s)|^2ds  & \leq c+\dfrac{2(2+\frac{1}{\sigma}-d)}{\xi_j^2}\int_0^{\xi_j}s|Q_s(\xi_j)|^2ds \\
& =c+\left(2+\frac{1}{\sigma}-d \right)|Q_{\xi}(\xi_j)|^2. \nonumber
\end{align}
Since $d > 1+\frac1{\sigma}$, there exists $\delta > 0$ such that $1-\delta>2+\frac{1}{\sigma}-d$ (that is, $0<\delta<d-1-\frac{1}{\sigma}$). Again we discuss two options for $Q$: $Q$ being bounded or $Q$ being unbounded.
If $Q$ is bounded, we move the ``possible negative terms" $\frac{1}{\xi^2}(\frac{d-2}{\sigma}-\frac{1}{\sigma^2})|Q|^2$ and $|Q|^2(\frac{1}{\sigma+1}|Q|^{2\sigma}-1)$ in (\ref{Q identity 11}) to the RHS and then put the prior bound on $|Q|$. These two terms will be absorbed by a constant $c$. If $Q$ is not bounded, then all terms in the LHS of the identity (\ref{Q identity 11}) are not negative. In both scenarios, we obtain
\begin{align}\label{Q bound 2}
(1-\delta)|Q_{\xi}(\xi_j)|^2 < \textrm{LHS of (\ref{Q identity 11})} \leq  c+(2+\frac{1}{\sigma}-d)|Q_{\xi}(\xi_j)|^2.
\end{align}
Note that $1-\delta>2+\frac{1}{\sigma}-d$, and hence, we reach a contradiction in the inequality (\ref{Q bound 2}) by choosing a sufficiently large $\xi_j$. Therefore, we conclude that $|Q_{\xi}|$ is bounded. Combining the cases of $\xi<M$ and $\xi \rightarrow \infty$, we conclude that $|Q(\xi)|$ is bounded for all $\xi >0$.
\end{proof}

\begin{remark}
The only limitation for extending our results to $d=1$ is the argument on choosing the parameter $\delta > 0$, where we want $1-\delta>2+\frac{1}{\sigma}-d$ to be true. This implies that we need $2+\frac{1}{\sigma}-d<1$, implying $d > 1+\frac1{\sigma}>1$.  
\end{remark}

Before we state our existence and uniqueness results for the equation \eqref{Q eqn}, we recall three lemmas on Volterra integral equations from \cite{Bu1983} on existence, continuity and extension of solutions.

\begin{lemma}[Existence and uniqueness for the Volterra integral equation]\label{L: IE Existence}
Consider the Volterra integral equation
\begin{align}\label{E: Volterra equation}
x(t)=f(t)+\int_0^t g(t,s,x(s))ds.
\end{align} 
Let $a$, $b$ and $L$ be positive numbers, and for some fixed $\alpha \in (0,1)$ we define $c=\alpha/L$. Suppose
\begin{itemize}
\item 
$f$ is continuous on $[0,a]$,
\item 
$g$ is continuous on 
$\displaystyle U=\lbrace  (t,s,x):0 \leq s \leq t \leq a \, \mathrm{and} \, |x-f(t)| \leq b \rbrace$, 
\item 
$g$ is Lipschitz with respect to $x$ on $U$, i.e., $\displaystyle |g(t,s,x)-g(t,s,y)| \leq L|x-y|$, if $(t,s,x), \, (t,s,y) \in U$.
\end{itemize}
If $M=\max_U|g(t,s,x)|$, then there exists a unique solution of \eqref{E: Volterra equation} on $[0,T]$, where $T=\min[a,b/M,c]$.
\end{lemma}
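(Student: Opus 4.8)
The plan is to recast \eqref{E: Volterra equation} as a fixed-point problem and apply the Banach contraction mapping principle. Define the Picard operator
\[
(\mathcal{T}x)(t) = f(t) + \int_0^t g(t,s,x(s))\,ds,
\]
and seek a fixed point $x = \mathcal{T}x$ in a suitable complete metric space. The natural choice is the closed tube
\[
X = \left\{\, x \in C[0,T] : \max_{0\le t \le T} |x(t) - f(t)| \le b \,\right\},
\]
equipped with the sup-norm metric; as a closed subset of the Banach space $C[0,T]$ it is complete. The three quantities defining $T = \min[a, b/M, c]$ then play three distinct roles: $T \le a$ keeps the time variable in the region where $f$ and $g$ are given, $T \le b/M$ forces $\mathcal{T}$ to map $X$ into itself, and $T \le c = \alpha/L$ makes $\mathcal{T}$ a contraction.

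First I would check that $\mathcal{T}$ is well defined and maps $X$ into $X$. For $x \in X$ and $0 \le s \le t \le T$, one needs $(t,s,x(s)) \in U$ so that $g$ is defined and bounded by $M$; granting this, the estimate
\[
|(\mathcal{T}x)(t) - f(t)| \le \int_0^t |g(t,s,x(s))|\,ds \le M\,t \le M\,T \le b
\]
shows that $\mathcal{T}x$ again lies in the tube. I would also verify that $\mathcal{T}x$ is continuous in $t$: since $g$ is continuous on the compact set $U$ it is uniformly continuous and bounded there, so splitting the difference $(\mathcal{T}x)(t') - (\mathcal{T}x)(t)$ into the contribution from the moving upper limit of integration (controlled by $M\,|t'-t|$) and the contribution from the change in the first argument of $g$ (controlled by uniform continuity) yields $\mathcal{T}x \in C[0,T]$.

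Next I would establish the contraction estimate. For $x, y \in X$ the Lipschitz hypothesis in the third variable gives, for each $t \le T$,
\[
|(\mathcal{T}x)(t) - (\mathcal{T}y)(t)| \le \int_0^t |g(t,s,x(s)) - g(t,s,y(s))|\,ds \le L\int_0^t |x(s) - y(s)|\,ds \le L\,T\,\|x-y\|_\infty .
\]
Since $T \le c = \alpha/L$, the right-hand side is at most $\alpha\,\|x-y\|_\infty$ with $\alpha \in (0,1)$, so $\mathcal{T}$ is a contraction on $X$. The Banach fixed-point theorem then supplies a unique $x \in X$ with $\mathcal{T}x = x$, which is precisely the unique solution of \eqref{E: Volterra equation} on $[0,T]$.

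The main obstacle is the bookkeeping in the self-mapping step, namely ensuring that the substituted argument $(t,s,x(s))$ genuinely lands in $U$ for every $x \in X$ and every $0 \le s \le t \le T$, so that both the bound $|g| \le M$ and the Lipschitz estimate are applicable along the whole integration range. This is exactly the point where the geometry of the tube $X$ and the definition of $U$ must be reconciled, and where the constant $b$ (through $T \le b/M$) enters. Once well-definedness and the self-map property are secured, the contraction and uniqueness are routine. An alternative to invoking Banach's theorem would be to run the Picard iteration directly and sum a Bielecki-type weighted geometric series, but the contraction formulation above matches the stated hypotheses most cleanly.
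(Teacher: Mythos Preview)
The paper does not give its own proof of this lemma; it is stated there without proof and attributed to Burton's monograph \cite{Bu1983}. Your contraction-mapping argument is exactly the standard proof (and is the one Burton gives): set up the Picard operator on the closed tube in $C[0,T]$, use $T\le b/M$ for the self-map and $T\le \alpha/L$ for the contraction, then invoke Banach's fixed-point theorem. So there is nothing to compare against, and your approach is the expected one.

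One comment on the obstacle you flag at the end. As the set $U$ is written in the statement, membership of $(t,s,x(s))$ requires $|x(s)-f(t)|\le b$, whereas your tube $X$ only controls $|x(s)-f(s)|$; you note the tension but do not actually close it. In Burton's original the constraint is $|x-f(s)|\le b$ (the lemma in the paper is a transcription), and with that reading your tube $X$ gives $(t,s,x(s))\in U$ immediately for every $0\le s\le t\le T\le a$, so both the bound $|g|\le M$ and the Lipschitz estimate are available along the whole range of integration. With that clarification your argument is complete.
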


\begin{lemma}[Continuity for the Volterra integral equation]\label{L: IE Continuity}
Let $f:[0,a] \rightarrow \mathbb{R}$ and $g: U \rightarrow \mathbb{R}$ both be continuous, where $\displaystyle U=\lbrace  (t,s,x):0 \leq s \leq t \leq a \, \mathrm{and} \, |x-f(t)| \leq b \rbrace$.
Then there exists a continuous solution of \eqref{E: Volterra equation} on $[0,T]$, where $T=\min[a,b/M]$ and $M=\max_U \vert g(t,s,x) \vert$.
\end{lemma}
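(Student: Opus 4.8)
The plan is to recast \eqref{E: Volterra equation} as a fixed-point problem for the integral operator and solve it with \emph{Schauder's} fixed-point theorem. The presence of a Lipschitz condition in the previous lemma was what powered a contraction/successive-approximation argument and delivered uniqueness; here we are given only continuity of $g$, so this is the Peano-type situation in which uniqueness may fail but existence still follows from compactness rather than contraction.

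First I would extract the structural consequences of the hypotheses. Since $f$ is continuous on the compact interval $[0,a]$, the region $U$ is closed and bounded in $\mathbb{R}^3$, hence compact; therefore $M=\max_U|g|$ is finite and attained, and $g$ is \emph{uniformly} continuous on $U$. Fix $T=\min[a,b/M]$, work in the Banach space $C([0,T])$ with the sup-norm, and introduce the closed, bounded, convex set
$$
\mathcal{K}=\{x\in C([0,T]):|x(t)-f(t)|\le b \text{ for all } t\in[0,T]\}
$$
together with the operator $(Px)(t)=f(t)+\int_0^t g(t,s,x(s))\,ds$. I would then verify the three hypotheses of Schauder's theorem. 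Self-mapping: for $x\in\mathcal{K}$ the integrand lies in $U$ (so it is bounded by $M$), and the choice $T\le b/M$ yields $|(Px)(t)-f(t)|\le\int_0^t|g(t,s,x(s))|\,ds\le Mt\le MT\le b$, whence $P(\mathcal{K})\subseteq\mathcal{K}$. Relative compactness of $P(\mathcal{K})$: uniform boundedness is immediate, and for equicontinuity one splits, for $0\le t_1<t_2\le T$,
$$
(Px)(t_2)-(Px)(t_1)=\big(f(t_2)-f(t_1)\big)+\int_0^{t_1}\big(g(t_2,s,x(s))-g(t_1,s,x(s))\big)\,ds+\int_{t_1}^{t_2}g(t_2,s,x(s))\,ds,
$$
and bounds the three pieces via continuity of $f$, uniform continuity of $g$ in its first slot, and the trivial estimate $M|t_2-t_1|$ for the tail integral; each bound is uniform in $x\in\mathcal{K}$, so Arzel\`a--Ascoli applies. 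Continuity of $P$: if $x_n\to x$ uniformly, uniform continuity of $g$ gives $g(t,s,x_n(s))\to g(t,s,x(s))$ uniformly, and passing to the limit under the integral gives $Px_n\to Px$. Schauder then furnishes a fixed point $x\in\mathcal{K}$, which is exactly a continuous solution of \eqref{E: Volterra equation} on $[0,T]$.

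The main obstacle I anticipate is the equicontinuity estimate, specifically the middle integral $\int_0^{t_1}\big(g(t_2,s,x(s))-g(t_1,s,x(s))\big)\,ds$. The difficulty is that the first argument of $g$ plays a double role, being simultaneously the explicit parameter in $g(t,\cdot,\cdot)$ and the moving upper limit of integration, so one cannot merely invoke continuity in the integration variable $s$. The clean resolution is to lean on the compactness of $U$: uniform continuity of $g$ supplies a modulus of continuity in the first slot that is uniform over $(s,x)\in U$, forcing the middle term to $0$ as $|t_2-t_1|\to0$ uniformly in $x\in\mathcal{K}$. A secondary technical point to confirm is that for every $x\in\mathcal{K}$ the triple $(t,s,x(s))$ genuinely lies in $U$ for all $0\le s\le t\le T$, so that $M$ controls the integrand on the whole domain of integration; this keeps both the self-mapping estimate and the definition of $P$ internally consistent.
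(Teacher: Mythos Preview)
Your Schauder fixed-point argument is correct and complete: the self-map estimate, the Arzel\`a--Ascoli equicontinuity check (including your handling of the middle integral via uniform continuity of $g$ on the compact set $U$), and the continuity of $P$ are all sound, and together yield the claimed continuous solution on $[0,T]$.

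As for comparison with the paper: the paper does not supply its own proof of this lemma. It is one of three results on Volterra integral equations that the authors simply \emph{quote} from Burton's monograph \cite{Bu1983} (see the sentence immediately preceding Lemma~\ref{L: IE Existence}: ``we recall three lemmas on Volterra integral equations from \cite{Bu1983}''). So there is no in-paper argument to set your proposal against. Your Schauder-based proof is the standard Peano-type existence argument for this setting and is exactly the kind of proof one finds in the cited source; nothing in your write-up conflicts with how the lemma is used downstream in Theorem~\ref{T: Q existence}.
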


\begin{lemma}[Extension of the solution]\label{L: IE Extension}
Let $f:[0,\inf) \rightarrow \mathbb{R}$ and $g: U \rightarrow \mathbb{R}$ be continuous, where $\displaystyle U=\lbrace  (t,s,x):0 \leq s \leq t \leq a \, \mathrm{and} \, |x-f(t)| \leq b \rbrace$. 
If $x(t)$ is a solution of \eqref{E: Volterra equation} on an interval $[0,T)$, then there exists a $\tilde{T}>T$ such that $x(t)$ can be continued to $[0,\tilde{T}]$.
\end{lemma}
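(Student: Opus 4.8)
The plan is a two-stage argument. First I would promote the solution, which is only assumed to exist on the half-open interval $[0,T)$, to a genuine solution on the closed interval $[0,T]$ by showing that $\lim_{t\to T^-}x(t)$ exists; then I would treat $t=T$ as a fresh starting point and invoke the local existence result, Lemma~\ref{L: IE Continuity}, to push the solution strictly past $T$. Throughout I work inside the set $U$, on which $g$ is defined, so I tacitly assume $T\le a$ and that the solution stays in the slab $|x(t)-f(t)|\le b$. Since $f$ is continuous and the constraints $0\le s\le t\le a$ and $|x-f(t)|\le b$ are closed and bounded, $U$ is compact, and hence $g$ is bounded on it, $M=\max_U|g|<\infty$, a bound I use repeatedly.

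For the first stage, I would estimate the increment of $x$ near $T$. For $0\le t_1<t_2<T$ I split
\[
x(t_2)-x(t_1)=\big(f(t_2)-f(t_1)\big)+\int_{t_1}^{t_2}g(t_2,s,x(s))\,ds+\int_0^{t_1}\big(g(t_2,s,x(s))-g(t_1,s,x(s))\big)\,ds.
\]
The first bracket tends to $0$ by continuity of $f$; the middle integral is bounded by $M|t_2-t_1|$; and the last integral is controlled by $a$ times the modulus of continuity of $g$ in its first slot, which is uniform because $U$ is compact. Hence $x$ satisfies the Cauchy criterion as $t\to T^-$, so the limit $x_T:=\lim_{t\to T^-}x(t)$ exists. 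Setting $x(T)=x_T$ makes $x$ continuous on $[0,T]$; passing to the limit in \eqref{E: Volterra equation} (using the uniform continuity of $g$ on $U$ to move the limit inside the integral) shows that $x$ solves \eqref{E: Volterra equation} on the full closed interval $[0,T]$, and taking limits in $|x(t)-f(t)|\le b$ gives $|x(T)-f(T)|\le b$.

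For the second stage, with the solution now fixed on $[0,T]$, I would define the new forcing term
\[
\tilde f(t):=f(t)+\int_0^T g(t,s,x(s))\,ds,\qquad t\ge T,
\]
which is continuous in $t$ since $f$ is continuous and $g$ is continuous on $U$ with $x$ frozen on $[0,T]$. For $t\ge T$ the integral equation then rewrites as the shifted Volterra equation $x(t)=\tilde f(t)+\int_T^t g(t,s,x(s))\,ds$, to which I would apply Lemma~\ref{L: IE Continuity} (with the base point translated from $0$ to $T$). This produces a continuous solution on some $[T,\tilde T]$ with $\tilde T>T$, and concatenating it with the solution on $[0,T]$ yields the desired continuation.

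The main obstacle sits entirely in the first stage and in the bookkeeping needed to remain inside $U$: one must be sure that $g$ is only ever evaluated at points of $U$, which forces $T\le a$ and requires the solution not to have run into the boundary of the admissible slab, and that the uniform modulus of continuity of $g$ on the compact set $U$ is genuinely available to kill the third integral in the splitting above. Once the limit $x_T$ is secured and the strict room to move is confirmed, the restart in the second stage is routine, being nothing more than a relabeled application of the local existence lemma.
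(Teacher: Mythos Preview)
The paper does not actually supply a proof of this lemma: it is one of three results that are simply \emph{recalled} from Burton's monograph \cite{Bu1983} and then used as black boxes in the proof of Theorem~\ref{T: Q existence}. So there is no in-paper argument to compare your proposal against.

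That said, your two-stage outline is precisely the standard proof one finds in the reference: first use compactness of $U$ (hence boundedness and uniform continuity of $g$) to show $x$ is uniformly Cauchy near $T$ and thereby extend to the closed interval $[0,T]$; then rewrite the equation with the accumulated integral absorbed into a new forcing term $\tilde f$ and relaunch the local existence machinery from $t=T$. Your decomposition of $x(t_2)-x(t_1)$ into three pieces, with the uniform modulus of continuity of $g$ controlling the last piece, is the right mechanism for the Cauchy criterion. You have also correctly flagged the one genuine subtlety, namely that the restart requires some strict room inside the slab $|x-f(t)|\le b$; if $|x(T)-f(T)|=b$ exactly, one cannot guarantee that nearby candidate solutions remain in $U$, and the lemma as literally stated here (and in many textbook statements) quietly assumes this boundary case does not occur. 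In the application within the paper this is harmless, since boundedness of $Q$ on finite intervals keeps the solution well inside any sufficiently large slab.
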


\begin{theorem}[Existence and uniqueness of $Q$]\label{T: Q existence}
If $s_c>0$ and $d \geq 2$, for any given initial value $Q(0) \in \mathbb{R}$ and a constant $a>0$, the equation (\ref{Q eqn}) has a unique $C^2$ solution. 
\end{theorem}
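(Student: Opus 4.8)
The plan is to convert the singular second-order boundary value problem \eqref{Q eqn} into a single Volterra integral equation to which the existence, uniqueness and extension Lemmas~\ref{L: IE Existence}--\ref{L: IE Extension} apply, and then to promote the resulting local solution to a global one on $[0,\infty)$ using the a priori bounds from Lemma~\ref{L Q bound}. The starting point is to rewrite the radial equation in divergence form, $(\xi^{d-1}Q_\xi)_\xi = \xi^{d-1}\big[\,Q - ia(Q/\sigma + \xi Q_\xi) - |Q|^{2\sigma}Q\,\big]$, and integrate once from $0$ to $\xi$. The condition $Q_\xi(0)=0$ kills the boundary term, and since $d\ge 2$ the weight $s^{d-1}$ makes the integral well defined at the origin. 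The term involving $\xi Q_\xi$ is the only one carrying the derivative inside the integral; I would remove it by integration by parts, using $\int_0^\xi s^{d}Q_s\,ds = \xi^d Q(\xi) - d\int_0^\xi s^{d-1}Q\,ds$, after which the right-hand side depends on $Q$ alone.

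Integrating a second time from $0$ to $\xi$ (using the prescribed value $Q(0)$) and exchanging the order of integration by Fubini, I would arrive at a closed integral equation of the form
\begin{equation*}
Q(\xi)=Q(0)+\int_0^\xi g\big(\xi,s,Q(s)\big)\,ds,
\end{equation*}
in which the kernel contains the factor $\int_s^\xi t^{1-d}\,dt$ (equal to $(\xi^{2-d}-s^{2-d})/(2-d)$ for $d\ne 2$ and $\ln(\xi/s)$ for $d=2$) multiplied by $s^{d-1}$. The crucial point is that for $d\ge 2$ the two apparent singularities cancel: the combination $s^{d-1}\int_s^\xi t^{1-d}\,dt$ extends continuously to the corner $s=\xi=0$, so the kernel $g$ is continuous on the domain $U$ of Lemma~\ref{L: IE Existence}. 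Moreover $g$ is locally Lipschitz in its third argument, since both the linear terms and the nonlinearity $|Q|^{2\sigma}Q$ are $C^1$ functions of $(\Re Q,\Im Q)$ for every $\sigma>0$ (their Wirtinger derivatives have modulus $\sim|Q|^{2\sigma}$, which is bounded on bounded sets). With $f\equiv Q(0)$ continuous, Lemma~\ref{L: IE Existence} then yields a unique continuous solution on some interval $[0,T]$.

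Next I would bootstrap regularity and extend. Differentiating the integral equation shows $Q\in C^1$ with $Q_\xi(\xi)\sim c\,\xi$ near the origin, so that $\xi^{-1}Q_\xi$ has a finite limit and, feeding this back into \eqref{Q eqn}, $Q_{\xi\xi}$ is continuous up to $\xi=0$; hence the solution is genuinely $C^2$ and satisfies $Q_\xi(0)=0$. To reach all of $[0,\infty)$ I would invoke the extension Lemma~\ref{L: IE Extension} repeatedly; the only way a maximal solution could fail to be global is if $|Q|$ escaped to infinity at a finite $\xi$. This is exactly what Lemma~\ref{L Q bound} rules out: since $s_c>0$ and $d\ge 2$ give $d-1\ge d/2>1/\sigma$, i.e.\ $d>1+\tfrac1\sigma$, both $|Q|$ and $|Q_\xi|$ remain bounded, so no finite-$\xi$ blow-up can occur and the solution continues to $[0,\infty)$. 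Global uniqueness follows from the local uniqueness in Lemma~\ref{L: IE Existence} by a standard continuation argument: the set of $\xi$ on which two solutions with the same data agree is nonempty, open, and closed.

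The main obstacle, and the step I would spend the most care on, is the treatment of the origin. Both the factor $\tfrac{d-1}{\xi}$ in the radial Laplacian and the weight $\xi^{d-1}$ make the naive integral formulations have discontinuous kernels at $\xi=0$; the success of the whole scheme hinges on arranging the double integration and Fubini exchange so that the singular powers cancel and produce a kernel continuous on $U$, which is precisely where the restriction $d\ge 2$ is used (together with the integration by parts that eliminates the $\xi Q_\xi$ term). Once the problem is in genuine Volterra form with a continuous, locally Lipschitz kernel, the remaining existence, regularity, extension and uniqueness steps are routine applications of the cited lemmas and the a priori bound of Lemma~\ref{L Q bound}.
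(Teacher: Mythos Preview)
Your approach is essentially the paper's: convert \eqref{Q eqn} to a Volterra integral equation $Q(\xi)=Q(0)+\int_0^\xi g(\xi,s,Q(s))\,ds$ (the paper writes out the explicit kernels in \eqref{Q vol1}--\eqref{Q vol2}, separating $d=2$ from $d>2$, rather than describing the derivation), apply Lemmas~\ref{L: IE Existence}--\ref{L: IE Extension} for local existence, uniqueness and continuation, and then read off $C^2$ regularity by differentiating.

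The one substantive difference is how you justify global extension. You invoke Lemma~\ref{L Q bound} to rule out blow-up at a finite $\xi$, but that lemma, as stated and proved, concerns boundedness of an already-global $C^2$ solution as $\xi\to\infty$; its argument relies on terms of order $\xi^{-2}$ becoming negligible and does not directly preclude $|Q|\to\infty$ at a finite endpoint. The paper does \emph{not} use Lemma~\ref{L Q bound} in the proof of Theorem~\ref{T: Q existence} at all: it simply observes that a continuous solution is bounded on any closed interval $[0,M]$ and lets the extension lemma push $M\to\infty$. Neither argument is fully explicit on why the maximal interval cannot be finite, but your appeal to Lemma~\ref{L Q bound} is the wrong tool for that step; if you want an a priori bound on a finite interval, you would need to rework the identities of Lemma~\ref{L: Q identity} in that local setting rather than cite Lemma~\ref{L Q bound} as is.
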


\begin{proof}
The equation \eqref{Q eqn} is equivalent to the following Volterra integral equations. For $d=2$:
\begin{align}\label{Q vol1}
\begin{cases}
Q(\xi) = Q(0)-ia\int_0^{\xi} sQ(s)ds \\
\hspace{3cm}+  \int_0^{\xi} \left[ 1+ia(2-\frac{1}{\sigma})-|Q(s)|^{2\sigma}\right]Q(s)\left[s(\ln \xi -\ln s) \right]ds,\\
 Q(\infty)=0.
\end{cases}
\end{align}
For $d>2$:
\begin{align}\label{Q vol2}
\begin{cases}
\ds Q(\xi) = Q(0)-ia\int_0^{\xi} sQ(s)ds\\ 
\hspace{3cm}+  \dfrac{1}{d-2} \int_0^{\xi} \left[ 1+ia(d-\frac{1}{\sigma})-|Q(s)|^{2\sigma}\right]Q(s)\left(s-\frac{s^{d-1}}{\xi^{d-2}} \right)ds,\\
 Q(\infty)=0.
\end{cases}
\end{align}
Both of the equations \eqref{Q vol1} and \eqref{Q vol2} are of the form 
\begin{align}\label{Q form}
Q(\xi)=Q(0)+\int_0^{\xi} g(s,\xi,Q(s))ds, \quad Q(\infty)=0.
\end{align}

By Lemmas \ref{L: IE Existence} and \ref{L: IE Continuity} (arguing similarly to \cite{BCR1999}), one can easily see that 
\eqref{Q form} has a continuous unique solution on the interval $\xi \in [0,M]$ for a fixed $M>0$ as $g(s,\xi,Q(s))$ is continuous. Both existence and uniqueness are extended to $M=\infty$ by Lemma \ref{L: IE Extension} as $|Q(\xi)|$ is bounded on any finite interval $[0,M]$. We next note that $Q$ is the solution not only to the 
equation \eqref{Q vol1}, or \eqref{Q vol2}, but actually to the differential equation \eqref{Q eqn}, and thus, differentiating $Q$ twice classically, it is straightforward to see that $Q$ is of class $C^2$, which finishes the proof.
\end{proof}

\begin{corollary}\label{C: Q decay}
For $d \geq 2$ and $s_c>0$, if $\sigma=1$, then $|Q(\xi)| \lesssim \xi^{-1}$ for $\xi$ large enough (recall that $\xi$ is radial variable here, and thus, non-negative).
\end{corollary}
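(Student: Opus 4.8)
The plan is to reduce to the regime $\xi \to \infty$ and to show that, when $\sigma = 1$, both admissible asymptotic behaviors of $Q$ are controlled by $\xi^{-1}$. Every $C^2$ solution is bounded on a finite interval $[0,M]$, and by Lemma \ref{L Q bound} (valid since $d \geq 2 > 1 + \frac1\sigma$ in the cubic case) the quantities $|Q|$ and $|Q_\xi|$ are globally bounded; hence only the algebraic rate for large $\xi$ remains. First I would record that the linearization of \eqref{Q eqn} at infinity (discarding $|Q|^2Q$, which is higher order since $Q \to 0$) carries the two independent solutions $Q_1$ and $Q_2$ from the introduction, with $|Q_1| \sim \xi^{-1}$ and $|Q_2| \sim \xi^{1-d}$. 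For every $d \geq 2$ one has $1 - d \leq -1$, so both modes are $O(\xi^{-1})$ and the target bound is exactly the slowest decay rate, carried by the non-oscillatory span $Q_1$.

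To turn this into a rigorous upper bound I would use identity \eqref{Q identity 2}, which for $\sigma = 1$ loses its $\int_0^\xi s|Q|^2$ term (the coefficient $\frac1\sigma - 1$ vanishes) and collapses to
\[
a\,\xi^2 |Q(\xi)|^2 = -2\,\Im\!\big(\xi Q_\xi \bar Q\big) - 2(d-2)\,\Im\!\int_0^{\xi} Q_s\bar Q\,ds .
\]
The corollary is thereby reduced to showing the right-hand side stays bounded as $\xi \to \infty$. Using only the crude pointwise bounds $|\Im(\xi Q_\xi\bar Q)| \leq \xi\,|Q|\,|Q_\xi|$ and $|\Im(Q_s\bar Q)| \leq |Q_s|\,|Q|$ together with the boundedness of $|Q_\xi|$, a bound $|Q| \lesssim \xi^{-\beta}$ feeds back through the displayed identity to yield $\xi^2|Q|^2 \lesssim \xi^{1-\beta}$, i.e.\ $|Q| \lesssim \xi^{-(1+\beta)/2}$. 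Starting from $\beta = 0$ (boundedness), this recursion $\beta \mapsto \frac{1+\beta}{2}$ produces $\xi^{-1/2}, \xi^{-3/4}, \xi^{-7/8}, \dots$, so that $|Q| \lesssim \xi^{-1+\epsilon}$ for every $\epsilon > 0$.

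Closing the final gap to the sharp exponent $\xi^{-1}$ is where I expect the real work to lie. The bootstrap above stalls at $\xi^{-1+\epsilon}$ precisely because it only uses $|Q_\xi| \lesssim 1$; to reach $\xi^{-1}$ one must show that $Q_\xi$ inherits the decay of $Q$ (so that $\xi Q_\xi\bar Q \to 0$) and that $\int_0^\xi \Im(Q_s\bar Q)\,ds$ converges, which in turn hinges on cancellation in the fast-oscillating tail. This is forced by the first-order term $ia\xi Q_\xi$, the largest term in \eqref{Q eqn} and the source of the $e^{-ia\xi^2/2}$ factor in $Q_2$. I would obtain the required matching estimate from the integral-equation framework of Theorem \ref{T: Q existence}, following Wang \cite{Wang1990}: on the tail one treats $|Q|^2Q$ as a higher-order perturbation in the Volterra form \eqref{Q vol1} (for $d=2$) or \eqref{Q vol2} (for $d>2$) and matches $Q$ to $\alpha Q_1 + \beta Q_2$ with bounded coefficients via a contraction argument. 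Since $|Q_1| \sim \xi^{-1}$ and $|Q_2| \sim \xi^{1-d} \leq \xi^{-1}$, this gives $|Q| \lesssim \xi^{-1}$, and simultaneously validates the convergence needed to bound the right-hand side of the identity above. The main obstacle, then, is the delicate control of the oscillatory component of $Q_\xi$ in the tail, which the identity \eqref{Q identity 2} alone does not capture but the Volterra/fixed-point analysis does.
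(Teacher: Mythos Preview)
Your opening observation matches the paper's exactly: for $\sigma=1$ the coefficient $\tfrac1\sigma-1$ in \eqref{Q identity 2} vanishes and one is left with
\[
a\,\xi^2|Q(\xi)|^2 \;=\; -2\,\Im(\xi Q_\xi\bar Q) - 2(d-2)\int_0^\xi \Im(Q_s\bar Q)\,ds.
\]
At that point the paper does nothing further on its own; it simply refers to \cite[Theorem~2.2]{BCR1999} for the remainder. (One small slip in your write-up: with $\sigma=1$ one has $1+\tfrac1\sigma=2$, so the hypothesis of Lemma~\ref{L Q bound} reads $d>2$, not ``$d\ge 2>1+\tfrac1\sigma$''. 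This is harmless because $s_c>0$ together with $\sigma=1$ already forces $d>2$.)

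Your route after the identity is genuinely different from the paper's citation. The bootstrap $\beta\mapsto\tfrac{1+\beta}{2}$ is correct and yields $|Q(\xi)|\le C(\epsilon)\,\xi^{-1+\epsilon}$ for every $\epsilon>0$; this is a clean, self-contained step that neither the paper nor the cited reference takes. The gap is the final passage to the sharp exponent $\xi^{-1}$. You propose to close it by matching $Q$ on the tail to $\alpha Q_1+\beta Q_2$ via a contraction in the Volterra framework of Theorem~\ref{T: Q existence}, but this is only a program, not an argument. Two concrete obstructions: (i) the pair $Q_1,Q_2$ in the introduction are \emph{formal} leading-order asymptotics, not actual solutions one can project onto, so a rigorous matching first requires constructing an honest fundamental system on the tail (WKB/Liouville--Green, or the full fixed-point apparatus behind Theorem~\ref{T: Q Wang}); (ii) the Volterra representations \eqref{Q vol1}--\eqref{Q vol2} are based at $0$, and recasting them as a tail contraction needs a different kernel together with precisely the decay you are trying to establish. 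In effect you are invoking Theorem~\ref{T: Q Wang}-level machinery to finish what is stated as a corollary, and you leave that machinery unexecuted.

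So: same starting identity as the paper, a nice extra bootstrap the paper does not have, but the decisive step from $\xi^{-1+\epsilon}$ to $\xi^{-1}$ is missing. The paper's approach (deferring to \cite{BCR1999} once the $\sigma=1$ cancellation is noted) is complete by citation; yours is a plausible alternative route whose hard part has not been carried out.
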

\begin{proof}
When $\sigma=1$, the term $2a\left(\frac{1}{\sigma}-1 \right) \int_0^{\xi}s|Q|^2ds$ in \eqref{Q identity 2} cancels. Then, the rest of the proof is the same as in \cite[Theorem 2.2]{BCR1999}.
\end{proof}

\subsection{Decay of $Q$ and an alternative proof of existence for the case $s_c=\frac{1}{2}$.}
The argument of Thereom \ref{T: Q existence} does not provide information on the decay rate of $Q$ except for $\sigma = 1$ as in Corollary \ref{C: Q decay} (though the asymptotic analysis in \cite{LePSS1988} showed it should be $|\xi|^{-\frac{1}{\sigma}}$, and the argument for the cubic nonlinearity from \cite{BCR1999} also gave the decay rate $|\xi|^{-1}$ as we showed above). An argument of X.-P. Wang from \cite{Wang1990} allows us to obtain extension of the existence theory of $Q$ to other $s_c>0$ with $\sigma >\frac{1}{2}$ as well as the decay properties, which we state in Theorem \eqref{T: Q Wang}. {We note that this theorem should hold for all $s_c>0$ but for brevity we present the argument for $s_c=\frac{1}{2}$, see also our Remark \ref{R:1} on how to handle other cases.}  This alternative approach implies that $|Q(\xi)| \leq |\xi|^{-\frac{1}{\sigma}}$, the same conclusion as in the asymptotic analysis in \cite{LePSS1988}.   

\begin{theorem}\label{T: Q Wang}
For any $a>0$ satisfying $s_c=\frac{1}{2}$ and $\sigma> \frac{1}{2}$, there exists a global nontrivial solution of \eqref{Q eqn} such that $Q(\xi)=\frac{1}{\xi^{\frac{1}{\sigma}}}R(\xi)$, where $R(\xi)$ is bounded.
\end{theorem}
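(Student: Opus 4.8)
The plan is to strip off the expected algebraic decay by the substitution $Q(\xi)=\xi^{-1/\sigma}R(\xi)$ and to prove that the equation for $R$ has a bounded global solution. Inserting this ansatz into \eqref{Q eqn} and using that $s_c=\tfrac12$ forces $d=1+\tfrac2\sigma$ (so that $d-1=\tfrac2\sigma$), the first-order terms in $R$ cancel, and after multiplying by $\xi^{1/\sigma}$ one is left with
\[
R'' + i a\,\xi\, R' - R + \frac{1}{\xi^2}\left(\frac{\sigma-1}{\sigma^2}\,R + |R|^{2\sigma}R\right)=0 .
\]
Since $Q=\xi^{-1/\sigma}R$, the conclusion of the theorem is precisely that this equation admits a nontrivial global solution with $R$ bounded; the data in \eqref{Q eqn} translate into regularity of $R$ at the origin (with $Q(0)\neq0$ giving nontriviality) and the demand that $R$ not be driven to infinity as $\xi\to\infty$.

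First I would dispose of existence. For the dimensions covered by Theorem \ref{T: Q existence} the global $C^2$ solution with the prescribed data at the origin is already available, and for the remaining range one recasts the displayed $R$-equation (equivalently \eqref{Q eqn}) as a Volterra equation of the form \eqref{Q form} and applies Lemmas \ref{L: IE Existence}--\ref{L: IE Extension}; the crude bound of Lemma \ref{L Q bound}, which gives $|Q|\lesssim 1$ and hence $|R|\lesssim \xi^{1/\sigma}$, rules out blow-up at any finite $\xi$ and produces a global solution. The real content of the theorem is therefore to upgrade this growth bound to boundedness of $R$, i.e.\ the sharp decay $|Q(\xi)|\lesssim \xi^{-1/\sigma}$.

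For that, the key observation is that the linear operator $R\mapsto R''+ia\xi R'-R$ has two explicit asymptotic modes, $R_1\sim \xi^{-i/a}$ and $R_2\sim e^{-ia\xi^2/2}\,\xi^{\,i/a-1}$, both bounded as $\xi\to\infty$ (the fast-oscillating mode $R_2$ in fact decays like $\xi^{-1}$), while Abel's identity gives a Wronskian of constant modulus. I would then represent $R$ on $[\xi_0,\infty)$ by variation of parameters built from $R_1,R_2$, treating the lower-order and nonlinear terms $\xi^{-2}\big(\tfrac{\sigma-1}{\sigma^2}R+|R|^{2\sigma}R\big)$ as a perturbation. Because this perturbation carries the integrable weight $\xi^{-2}$, the associated integral operator is a contraction on a ball of $C_b([\xi_0,\infty))$ once $\xi_0$ is large, yielding a bounded solution near infinity; a continuity/bootstrap argument started from the crude bound $|R|\lesssim\xi^{1/\sigma}$ of Lemma \ref{L Q bound} then identifies the global solution above with a bounded one. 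Boundedness on the compact part $[0,\xi_0]$ is automatic, so $R$ is bounded on $[0,\infty)$ and $Q=\xi^{-1/\sigma}R$ has the asserted form.

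The main obstacle is exactly this boundedness step. The drift $ia\xi R'$ has an unbounded coefficient and is the term that defeats a naive energy estimate obtained by pairing the equation with $\bar R'$ or $\bar R$: the resulting identities, in the spirit of \eqref{Q identity 1}--\eqref{Q identity 2}, produce $\xi$-weighted contributions that do not close by themselves. One must instead exploit that this drift is conservative and that the nonlinearity is damped by $\xi^{-2}$, which is what makes the variation-of-parameters/fixed-point scheme, rather than a single monotone quantity, the correct mechanism. Finally, the restriction to $s_c=\tfrac12$ enters only through the clean exponents in the substitution; as indicated in Remark \ref{R:1}, the same reduction treats general $s_c>0$ at the cost of an additional lower-order term carrying a slower-decaying but still integrable weight.
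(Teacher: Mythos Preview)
Your route is genuinely different from the paper's, and the bootstrap step contains a real gap.

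The paper does not use the bare substitution $Q=\xi^{-1/\sigma}R$. After an innocuous rescaling in $a$ it sets $\tilde Q(t)=t^{-1/\sigma}e^{-it^2/4}P(t)$; the extra oscillatory factor is the crucial move, because it converts the drift $ia\xi R'$---the very term you flag as the obstacle---into the potential $\tfrac{t^2}{4}P$, producing a Weber-type equation in which the first-order term disappears once $s_c=\tfrac12$ kills the remaining $\tfrac{2s_c-1}{t}P_t$. The paper then passes to a first-order system and runs a Schauder fixed point on the space $\{y\in C^1[0,\infty): y(0)=0,\ |y|\text{ and }|y'|/(t+1)\text{ bounded}\}$, so boundedness of $P$ (hence of $R$) is built into the function space from the outset and no bootstrap is needed. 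The hypothesis $\sigma>\tfrac12$ enters through the decay rate $t^{-\min(3,\,2\sigma+1)}$ of the transformed nonlinear perturbation in the Gronwall estimate feeding the fixed point.

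Your contraction on $C_b([\xi_0,\infty))$ does manufacture bounded solutions near infinity, but the ``continuity/bootstrap'' meant to identify the solution launched from the origin with one of them does not close. From Lemma~\ref{L Q bound} you only know $|R|\lesssim\xi^{1/\sigma}$, so the nonlinear forcing $\xi^{-2}|R|^{2\sigma}R$ is of order $\xi^{1/\sigma}$; feeding this through your variation-of-parameters kernels (of moduli $\sim 1$ and $\sim s^{-1}$) returns $|R|\lesssim\xi^{1/\sigma}$ again---no improvement. Invoking oscillatory cancellation in $\int R_1 f/W$ would require a priori control on the oscillations of $R$ itself, which you do not have. What is missing is a single mechanism that enforces regularity at the origin and boundedness at infinity simultaneously; the paper supplies exactly this by constructing the solution in one Schauder step on a space that encodes both.
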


\begin{proof}
In order to apply the fixed point argument, we first define the Banach space
\begin{align*}
\mathbb{B}=\left\lbrace y(t)\vert y(t)\in C^1[0,\infty), \, |y(t)|, \,  \frac{|y'(t)|}{t+1}  \textrm{ bounded}, \, y(0)=(0,0)^T \right\rbrace
\end{align*}
with the norm
\begin{align*}
\|y(t)\|_{\mathbb{B}}= \max \left\lbrace \sup_{t\in [0,\infty)}|y(t)|, \, \sup_{t\in [0,\infty)} \frac{|y(t)|}{t+1} \right\rbrace.
\end{align*}

We go back to the equation \eqref{Q eqn}. We rescale the solution $Q$ as 
\begin{align}
\tilde{Q}(t)=\dfrac{1}{a^{\frac{1}{2\sigma}}}Q\left( \frac{t}{a^{\frac{1}{2}}}\right).
\end{align}
Then the equation \eqref{Q eqn} becomes
\begin{align}\label{Q rescaled}
\tilde{Q}_{tt}+\dfrac{d-1}{t}\tilde{Q}_t-\dfrac{1}{a}\tilde{Q}+i(t\tilde{Q}_t+\dfrac{\tilde{Q}}{\sigma})+|\tilde{Q}|^{2\sigma}\tilde{Q}=0.
\end{align}
Further substitution 
\begin{align*}
\tilde{Q}(t)=\dfrac{1}{t^{\frac{1}{\sigma}}} e^{-it^2/4} P(t),
\end{align*}
and recalling $s_c=\frac{d}{2}-\frac{1}{\sigma}$, transforms the equation \eqref{Q rescaled} into 
\begin{align} \label{P eqn}
P_{tt}+\dfrac{2s_c-1}{t}P_t+\dfrac{1}{\sigma} \left( 2+\dfrac{1}{\sigma}-d\right) \dfrac{1}{t^2}P-\dfrac{1}{a}P+\dfrac{t^2}{4}P-is_cP+\dfrac{|P|^{2\sigma}P}{t^2}=0.
\end{align}

The term $\frac{2s_c-1}{t}P_t$ is eliminated by taking $s_c=\frac{1}{2}$. By splitting $P=x_1+ix_2$, where $x_1$ and $x_2$ are real and $x= (x_1,x_2)^T$, and rewriting \eqref{P eqn} as a first order system by letting 
$X=(x(t),x_t(t))^T$, we obtain
\begin{align}\label{X eqn}
\begin{cases}
X'=A(t)X+H_y(t)X \\
X(0)=(0,0,b,0)^T,
\end{cases}
\end{align} 
where $b$ is some constant, and
\begin{align}\label{X split}
\begin{split}
& A(t)=\left[ \begin{matrix}
0 & I\\
-a(t)I+\frac{1}{2} J&0
\end{matrix} \right], \qquad 
H_y(t)=\left[ \begin{matrix}
0 & 0 \\
\frac{|x(t)|^{2\sigma}}{t^{2}} I & 0
\end{matrix} \right], \\ 
& a(t)=\frac{t^2}{4}-\frac{1}{a}+\frac{1}{\sigma}\left(\frac{3}{2}-\frac{d}{2}\right)\frac{1}{t^2}, \qquad J=\left[ \begin{matrix}
0&-1\\
1&0
\end{matrix} \right],
\end{split}
\end{align}
and $I$ is the $2 \times 2$ identity matrix.

From \eqref{X split}, we observe that the linear part $A(t)$ is of the same form as in \cite{Wang1990}, except with the extra term $\frac{1}{\sigma}(\frac{3}{2}-\frac{d}{2})\frac{1}{t^2}$ in $a(t)$, which is of the higher order and can be neglected in the analysis. Therefore, the uniform bound on the linear term $A(t)$ is obtained according to the argument in \cite[Section 3 and Section 4, Chapter 2]{Wang1990}.

\begin{remark}\label{R:1}
The statement of Theorem \ref{T: Q Wang} holds for other $s_c\neq \frac{1}{2}$, which can be obtained as follows: if $s_c \neq \frac{1}{2}$, then the matrix $A(t)$ in \eqref{X split} becomes
$$  A(t)=\left[ \begin{matrix}
0 & I\\
-a(t)I+\frac{1}{2} J& \frac{1-2s_c}{\xi} I
\end{matrix} \right].$$
Consequently, the four eigenvalues of the matrix $A(t)$ are
\begin{align*}
& \lambda_1= \sqrt{-a+\frac{i}{2}}-2s_c+1,  \qquad
 \lambda_2= \sqrt{-a-\frac{i}{2}}-2s_c+1, \\
& \lambda_3= -\sqrt{-a+\frac{i}{2}}-2s_c+1,  \qquad
  \lambda_3= -\sqrt{-a-\frac{i}{2}}-2s_c+1.
\end{align*}
Next, after some tedious computations, we get the corresponding eigenfunctions,  then diagonalizing the matrix $A(t)$ yields a corresponding expression to \eqref{X split} and neglecting higher orders finishes this argument in a general case.
\end{remark}

Continuing with the proof of the theorem, we follow the same process as in \cite[Chapter 2]{Wang1990}: it is easy to see that for Lemma 2 in \cite[Section 4, Chapter 2]{Wang1990}, the prior bound for the nonlinear term $\tilde{B}_y(t)$, which comes from a series of linear transformations on $H_y(t)$ (see \cite[Section 3, Chapter 2]{Wang1990}), turns out to be $\|\tilde{B}_y(t)\|_{\mathbb{B}} \leq \frac{1}{t^{\alpha}}$ for large $t$, where $\alpha=\min \lbrace 3,2\sigma +1 \rbrace$. In order to apply the Gronwall's inequality for Lemma 3 in \cite[Section 4, Chapter 2]{Wang1990} for the next step, we need $2\sigma+1<2$. Thus, we need $\sigma>\frac{1}{2}$.

The proof of Theorem \ref{T: Q Wang} is completed by an application of the Schauder fixed point theorem. The rest of the details can be found in \cite[Section 4, Chapter 2]{Wang1990}.
\end{proof}

\begin{remark} \label{E:1}
Note that the above Theorems \ref{T: Q existence} and \ref{T: Q Wang} do not guarantee that for any $a>0$,  the solution $Q$ would be slowly decaying,  i.e., $Q=\alpha Q_1 +\beta Q_2$ with $\beta = 0$,  where $Q_1$ and $Q_2$ are as in \eqref{Q1 and Q2} in Section \ref{sec: Qprofile}. This unique solution $Q$ may include the fast oscillating tails from $Q_2$ part, which is not suitable for us (due to zero Hamiltonian). The main issue is to identify for which $a>0$, solutions $Q$ do not have fast oscillating parts,  which we start addressing in the next section.  
\end{remark}

\section{Profiles $Q$}\label{sec: Qprofile}

Observe that after rescaling \eqref{DRNLS}, we have the mass and energy in terms of $v$ as 
\begin{align}
\label{rescaled mass}& M[v]=L(t)^{2s_c} \int_{\mathbb{R}^d} |v|^2 dx,\\
\label{rescaled energy}& E[v]= L(t)^{2s_c-2} \, \frac12 \,  \int_{\mathbb{R}^d} \left( |\nabla v|^2 - \frac{2}{p+1} |v|^{p+1}  \right)  dx \defeq \frac{\omega_d}2 \, L(t)^{2s_c-2} \, H[v]. 
\end{align}
It was shown in \cite{BCR1999}, see also \cite{KL1995}, that in the cubic NLS case (and $2<d<4$, or equivalently, $0<s_c<1$), the Hamiltonian  $H[Q]=0$ if and only if
\begin{align}\label{Q bc}
\left|\left( \dfrac{1}{\sigma}+\dfrac{i}{a} \right) Q(\xi) + \xi Q_{\xi}(\xi) \right| \rightarrow 0 \quad \mathrm{as} \,\, |\xi| \rightarrow \infty. 
\end{align}
We note that this property is an essential ingredient in numerical study of solutions to \eqref{Q-super}, since it is a good approximation for the boundary condition $Q(\infty)=0$ by taking $\xi=K$ in \eqref{Q bc} for some $K$ large enough (e.g., $K=200$).

We next investigate suitable boundary conditions in the case $s_c \geq 1$. Recall that the asymptotic analysis in \cite{LePSS1987} for the equation (\ref{Q eqn}) shows that one can drop the (higher order) nonlinear term $|Q|^{2\sigma}Q$ in \eqref{Q-super} and obtain $Q$ as a linear span of two linearly independent solutions 
\begin{align}\label{Q1 and Q2}
Q_1 \sim |\xi|^{-\frac{i}{a}-\frac{1}{\sigma}}, \qquad Q_2 \sim e^{-\frac{ia\xi^2}{2}}|\xi|^{\frac{i}{a}-d+\frac{1}{\sigma}},
\end{align} 
and hence, $Q=\alpha Q_1+\beta Q_2$. 
As discussed in the introduction, we exclude the fast oscillating solution $Q_2$, since  only $Q_1$ with the slow decaying tails would produce potential candidates for the stable blow-up profile in the NLS equation (see \cite{LPSS1988}, \cite{SS1999} for further details). 
Note that if $Q \sim Q_1$, then the decay of $Q$ is $|\xi|^{-\frac1{\sigma}}$, as we also show in Theorem \ref{T: Q Wang}. Computing $H[Q]$, we note that both terms do not converge in the energy-supercritical case ({\color{red} the asymptotic behavor is} $\xi^{2(s_c-1)}$ as $\xi \to \infty$) and give some constant when $s_c=1$, see exact computation at the end of this section. 
Therefore, we no longer have the zero Hamiltonian property for $Q$ when $s_c \geq 1$, however, the equation (\ref{Q bc}) is still a good approximation for the boundary condition $Q(\infty)=0$, since the solution $Q$ must be linearly dependent on $Q_1$ when $\xi \rightarrow \infty$; therefore, computing the Wronskian for $Q$ and $Q_1$, gives us 
\begin{align*}
\left( \dfrac{1}{\sigma}+\dfrac{i}{a} \right) Q(\xi) + \xi Q_{\xi}(\xi)=0 ~~~\mbox{as} ~~~ \xi \rightarrow \infty,
\end{align*}
which yields \eqref{Q bc} in all cases $s_c > 0$. Thus, we approximate the boundary condition $Q(\infty)=0$ by 
\begin{align}\label{Q eqn bc2}
\left( \dfrac{1}{\sigma}+\dfrac{i}{a} \right) Q(K) + KQ_{\xi}(K)=0,
\end{align} 
taking sufficiently large $K$.

We are now ready to compute the profiles for the  mass-supercritical cases, including energy-critical and energy-supercritical regimes. We first confirm the results of Budd et al in \cite{BCR1999} (in particular, we show our computations in the 3d cubic case ($d=3$, $\sigma=1$) as the computational consistency check), and then show the results for other  nonlinear powers such as $p=5$ and $p=7$ (or $\sigma=2,3$, correspondingly) and dimensions $d=2, 3, 4, 5$.

\subsection{Computation of $Q$}

We split the solution $Q$ into the real and imaginary parts $Q=P+iW$. Then, the equations (\ref{Q eqn}) and (\ref{Q eqn bc2}) become

\begin{eqnarray}\label{Q compute}
\begin{cases}
\Delta P-P-a(\dfrac{W}{\sigma}+\xi W_{\xi})+(P^2+W^2)^{\sigma}P=0,\\
\Delta W-W+a(\dfrac{P}{\sigma}+\xi P_{\xi})+(P^2+W^2)^{\sigma}W=0,\\
P_{\xi}(0)=0,\\
W(0)=0,\\
W_{\xi}(0)=0,\\
\frac{1}{\sigma}P-\frac{1}{a}W+KP_{\xi}=0,\\
\frac{1}{a}P+\frac{1}{\sigma}W+KW_{\xi}=0.
\end{cases}
\end{eqnarray}

The equation system (\ref{Q compute}) is discretized by the Chebyshev collocation points and differential matrices (see \cite{Tr2000}, \cite{STL2011}). Then, it reduces into the nonlinear algebraic system which can be solved by the matlab solver \texttt{fsolve}. The initial guess for the solver \texttt{fsolve} for such case is obtained by solving the initial value problem of (\ref{Q eqn}) by the matlab solver \texttt{ode45}, with the estimation of the parameters $a$ and $P(0)$. This method requires a relative accurate estimation on the values of $a$ and $P(0)$. However, since we know that $a\approx 0.917$ and $P(0)\approx 1.885$ for $\sigma=1$ from \cite{LPSS1988} and \cite{BCR1999}, the estimation on  $a$ and $P(0)$ for the other cases can be obtained by {\it continuous parameter} search. For example, if we want to compute the 4d cubic case ($d=4$ and $\sigma=1$), we use the values of $a$ and $P(0)$ to compute the initial guess of $d=3.1$, and then compute the solution $Q$ for $d=3.1$. Next, we use these parameters of $a$ and $P(0)$ to compute the case for $d=3.2$, and finally until $d=4$. During our computation, this estimation can be refined by extrapolation, e.g., $a \vert _{d=4}=3a \vert _{d=3.9}-3a \vert _{d=3.8}+a\vert _{d=3.7}$. This refinement allows us to take larger steps on the dimension or nonlinearity and also reduce the iterations in the \texttt{fsolve} in the next stage. 

We use $N=257$ Chebyshev collocation points and take the length of the computational interval $L_D=200$ during our computation. The tolerance on \texttt{fsolve} is set to be $10^{-15}$. For most of the cases, the residue for the algebraic system is on the order of $10^{-12}$.

\subsection{Numerical results on profiles Q}

We first show our results of $Q$ solutions in the 3d cubic case (see Figure \ref{Q3d3p}), {where we plot $|Q|$ (since $Q$ is complex-valued).}  
Our challenge is to 
select the initial condition $Q(0)$ and the parameter $a$ that yields the slowly oscillating type $Q_1$ solutions  of the profile equation. In \cite{BCR1999} it was done by considering the perturbations on the dimension ($d = 2.0001, 2.001, 2.01$, etc) and showing that (at least some) $Q_1$-type solutions of the profile equation can be organized in branches of multi-bump solutions $Q_{K,J}$. 

In these multi-bump solutions $Q_{K,J}$ the index $K = 0,1,2,...$ indicates the branch of the sequence. In the cubic NLS case in \cite{Budd2002} it was shown that solutions of each specific branch $K$ converge non-uniformly as $d \to 2 $ to the $H^1$ solution $Q^{K}$ of the `ground state' equation \eqref{Q-critical} 
(here, $Q^0 \equiv 0$, $Q^1$ is the ground state, $Q^{l}$, $l \geq 2$, are the excited, sign-changing, states). Thus, we are mostly interested in the branch $K=1$, or $\{Q_{1,J} \}$, solutions of which non-uniformly converge to the well-known ground state. Solutions within the branch were classified in \cite{BCR1999} (in the cubic NLS case) by the number of bumps (or maxima) in each sequence, $J=0,1,...$, when the dimension $d$ was close to 2. It was also shown that when $d$ is close to 2, the number $K+J$ indicated the number of turning points. As we increase the dimension, the number of turning points (as well as maxima) change for a fixed $J$, and some profiles of $|Q|$ loose some turning points, and some even become completely monotone (with no bumps), see Figures \ref{Q3d3p}-\ref{Q3dp7}; in any case, here we keep the notation $Q_{K,J}$ for consistency. 

We obtain $Q_{1,0}$ from initial guess for parameters $a$ and $Q(0)$ in the 3d cubic case we extracted from the literature \cite{LPSS1988}, \cite{BCR1999} -- see the blue curve in Figure \ref{Q3d3p}; this confirms matching of our computations with previous results. By using another initial guess for parameters $a$ and $Q(0)$, for example, from \cite{BCR1999}, we obtain the solution $Q_{1,1}$, which is the first bifurcation from $Q_{1,0}$ (see the red dashed curve in Figure \ref{Q3d3p}). 

To really understand the dependence of solutions on parameters $a$ and $Q(0)$, we study the pseudo-phase plane, which was introduced by Kopell-Landman in \cite{KL1995} and adopted in Budd-Chen-Russell \cite{BCR1999}. We write
\begin{align}\label{Q phase}
Q\equiv C(\xi)\exp \left( i\int_0^{\xi} \psi \right), \qquad D(\xi)={C_{\xi}}/{C}\equiv\Re({Q_{\xi}}/{Q}).
\end{align}
In other words, $C$ is the amplitude of $Q$, $C(\xi) = |Q(\xi)|$, $D$ is its logarithmic derivative, and $\psi$ is the gradient of the phase. 
{In the coordinates $(C,D)$ we will track the behavior of the graph as it is decreasing down to the origin as both $C$ and $D$ approach zero when $\xi \to \infty$. To see that recall that asymptotically 
$$
Q(\xi) \sim \alpha \, \xi^{-\frac{1}{\sigma}} \exp \left(-\frac{i}{a} \, \log(\xi) \right) + \beta \, \xi^{-(d-\frac1{\sigma})} \exp\left( -\frac{i\, a \, \xi^2}2  + \frac{i}{a} \log(\xi)\right),  
$$
where the first term is slowly decaying and the second term decays faster with fast oscillations. The solution $Q$ that varies slowly at infinity, would have no oscillations at the end of the curve (as $C \to 0$), since 
$$
C \sim \frac{\alpha}{\xi^{\frac{1}{\sigma}}} \quad \mbox{and} \quad D \sim -\frac1{\sigma \,\xi} \quad \mbox{as} \quad \xi \to \infty.
$$
Thus, such solutions will approach the origin in coordinates $(C,D)$ along the curve 
$D \sim -\frac1{\sigma \, \alpha^\sigma} \, C^{\sigma}$. In the case of $\sigma=1$ (cubic power), this will be a straight line with slope $-1/\alpha$, which we demonstrate in the paths shown in Figure \ref{Q3d3p} (right plot). In the case of $\sigma =2, 3$, this will be a parabola (quadratic or cubic, respectively), which we show in Figures \ref{Q3dp5} and \ref{Q3dp7} (plots on the right).

If the solution $Q$ oscillates fast at infinity, then its graph in the coordinates $(C,D)$ will approach the origin in the oscillating manner, since 
$$
C \sim \frac{\alpha}{\xi^{\frac{1}{\sigma}}} \quad \mbox{and} \quad D \sim
- \frac{\beta \, a}{\alpha} \, \frac1{\xi^{d-\frac2{\sigma}-1}} \, \sin\left(\frac{a \, \xi^2}2 - \frac2{a} \log (\xi)\right).
$$}
We show an example of such oscillating behavior for $\sigma=1$ (cubic nonlinearity in 3d) in Figure \ref{Q3d3p_a_behavior}, where we perturb the value of $a$, while keeping $Q(0)$ fixed (recall $Q(0)=Q_{1,0}(0)= 1.8856569903$): taking  $a=0.8$ (left plot) and $a=1$ (right plot). Both plots show severe oscillations as the curve approaches the origin (recall that for $a \approx 0.9$, or more precise, $a = 0.9173561446$, the curve has no oscillations as shown in Figure \ref{Q3d3p}). 
\begin{figure}[ht]
\begin{center}
\includegraphics[width=0.42\textwidth]{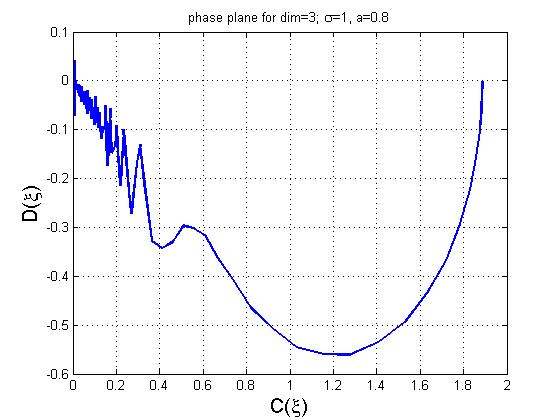}
\includegraphics[width=0.42\textwidth]{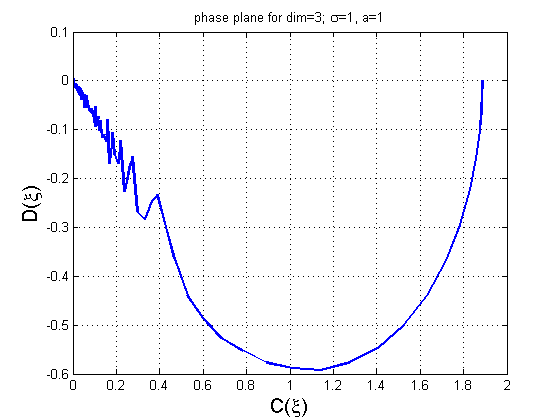}
\caption{ $Q$ solutions in the coordinates $(C,D)$ for different values of parameter $a$ with fixed value of $Q(0)\equiv Q_{1,0}(0)$. Left: $a=0.8$. Right: $a=1$. Both solutions show fast oscillating behavior when approaching the origin, thus, not suitable candidates for the blow-up profiles (the value of $a$ with no oscillations is close to $0.9$).}
\label{Q3d3p_a_behavior}
\end{center}
\end{figure}
We note that our results in Figure \ref{Q3d3p} match the ones obtained in \cite{BCR1999} both in the amplitude and phase plane representations, therefore, we conclude that our numerical approach is trustful.

\begin{figure}
\begin{center}
\includegraphics[width=0.42\textwidth]{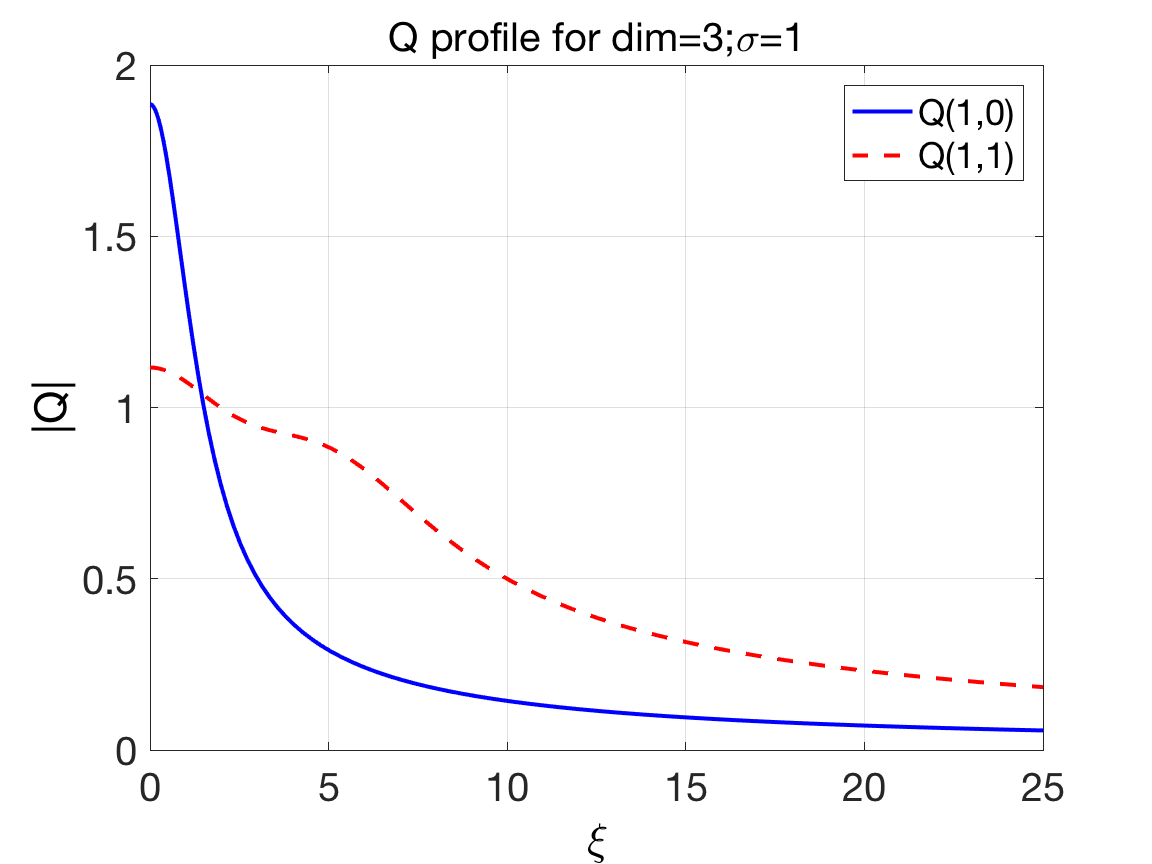}
\includegraphics[width=0.42\textwidth]{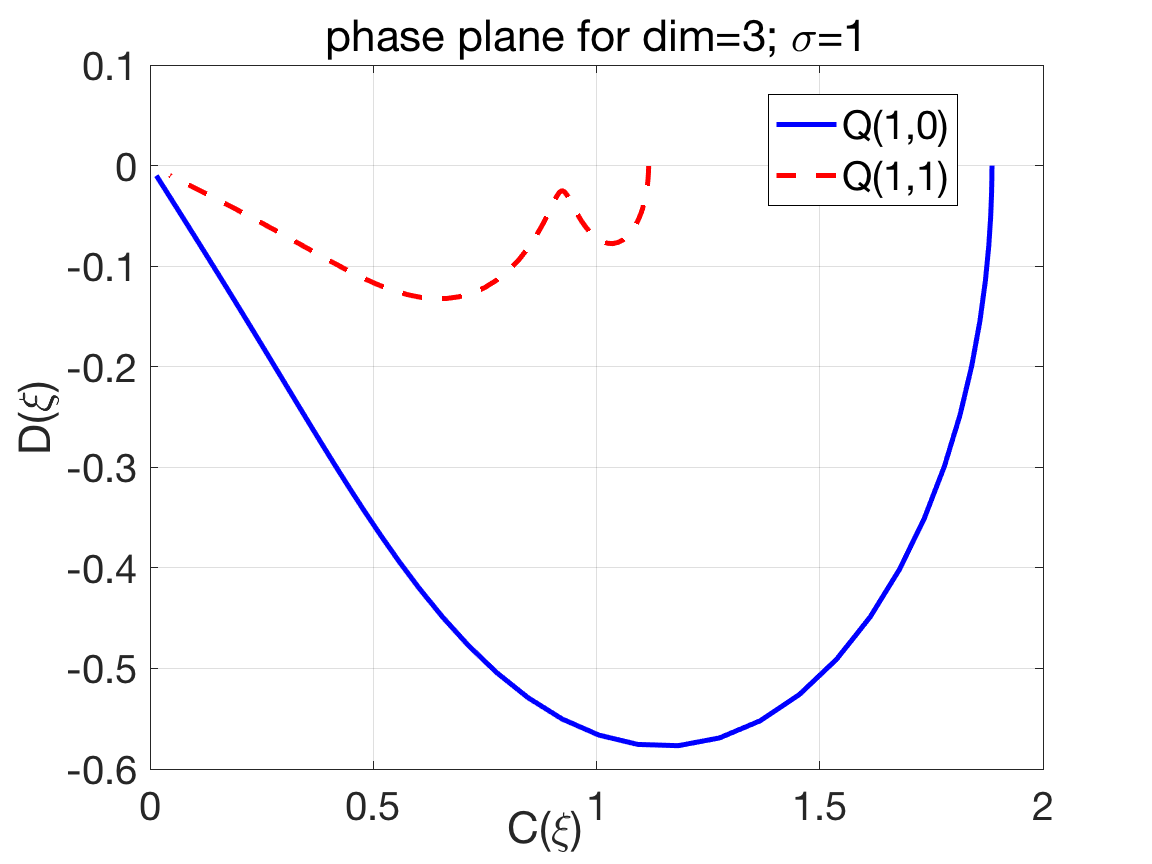}
\caption{ $Q$ profiles (we plot $|Q|$). Left: The blue curve is the first solution $Q_{1,0}$ in the branch $Q_{1,J}$ and the red one is the first bifurcation solution $Q_{1,1}$. Right: The phase plane for the 3d cubic case in coordinates $(C,D)$.} 
\label{Q3d3p}
\end{center}
\end{figure}

In Figures \ref{Q3dp5}--\ref{Q3dp7} we show $|Q|$ profiles for the 3d quintic ($d=3$ and $\sigma=2$) and 3d septic ($d=3$ and $\sigma=3$) cases , which are the energy-critical and energy-supercritical cases, respectively. 

\begin{figure}
\begin{center}
\includegraphics[width=0.42\textwidth]{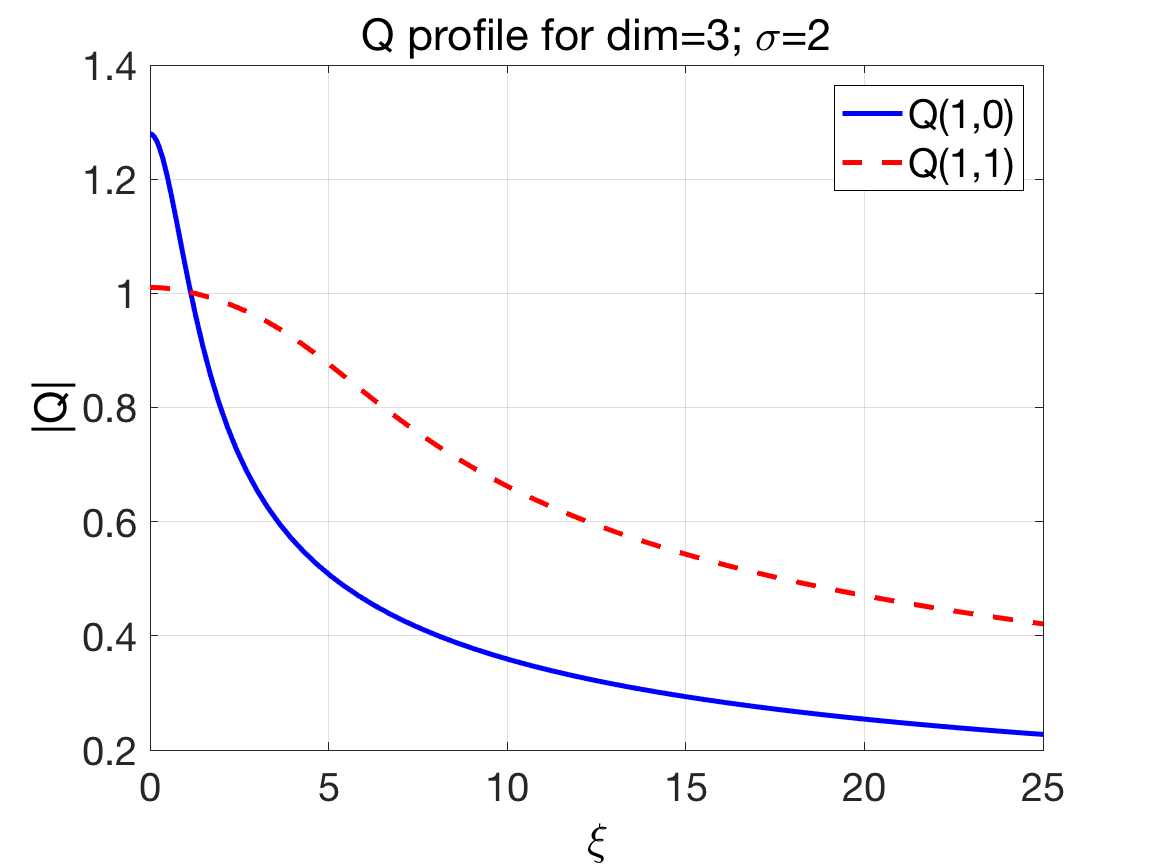}
\includegraphics[width=0.42\textwidth]{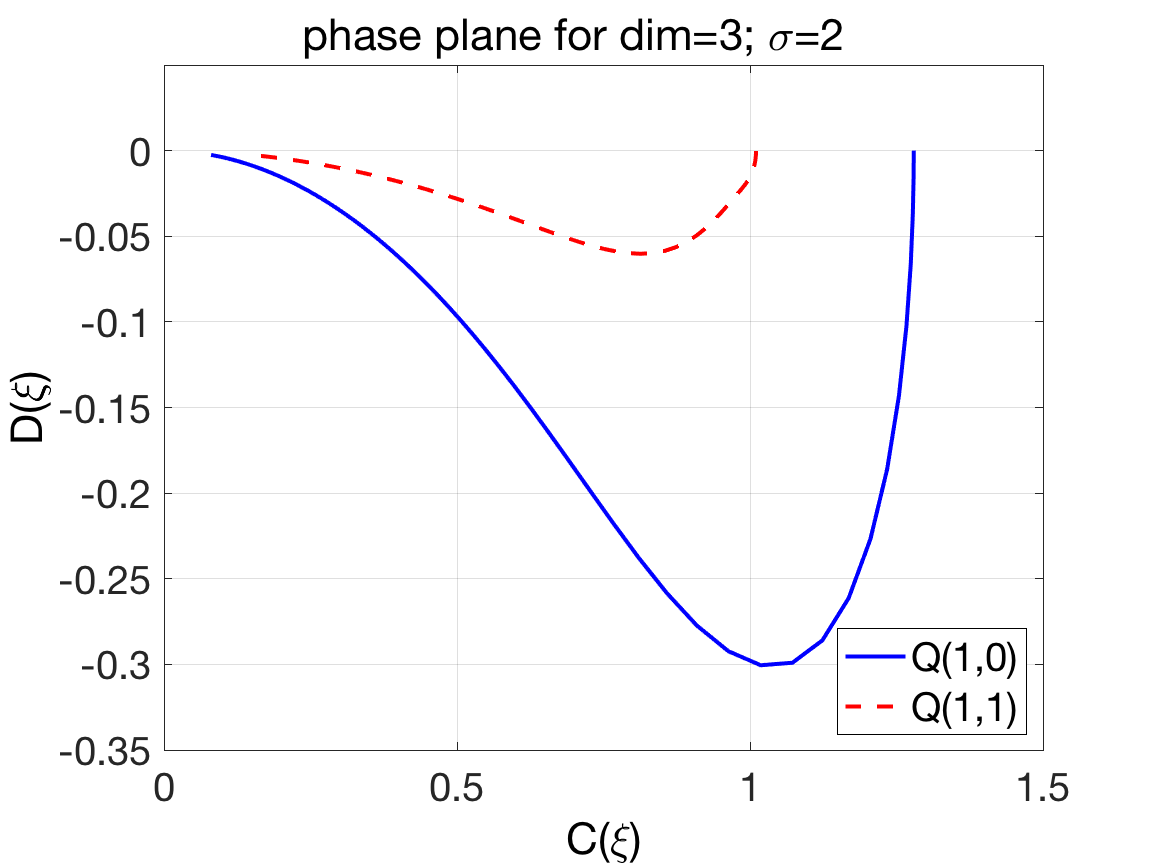}
\caption{ $Q$ profiles for $d=3$, $\sigma=2$ (left) and the phase plane for $Q$ (right).}
\label{Q3dp5}
\end{center}
\end{figure}

\begin{figure}
\begin{center}
\includegraphics[width=0.42\textwidth]{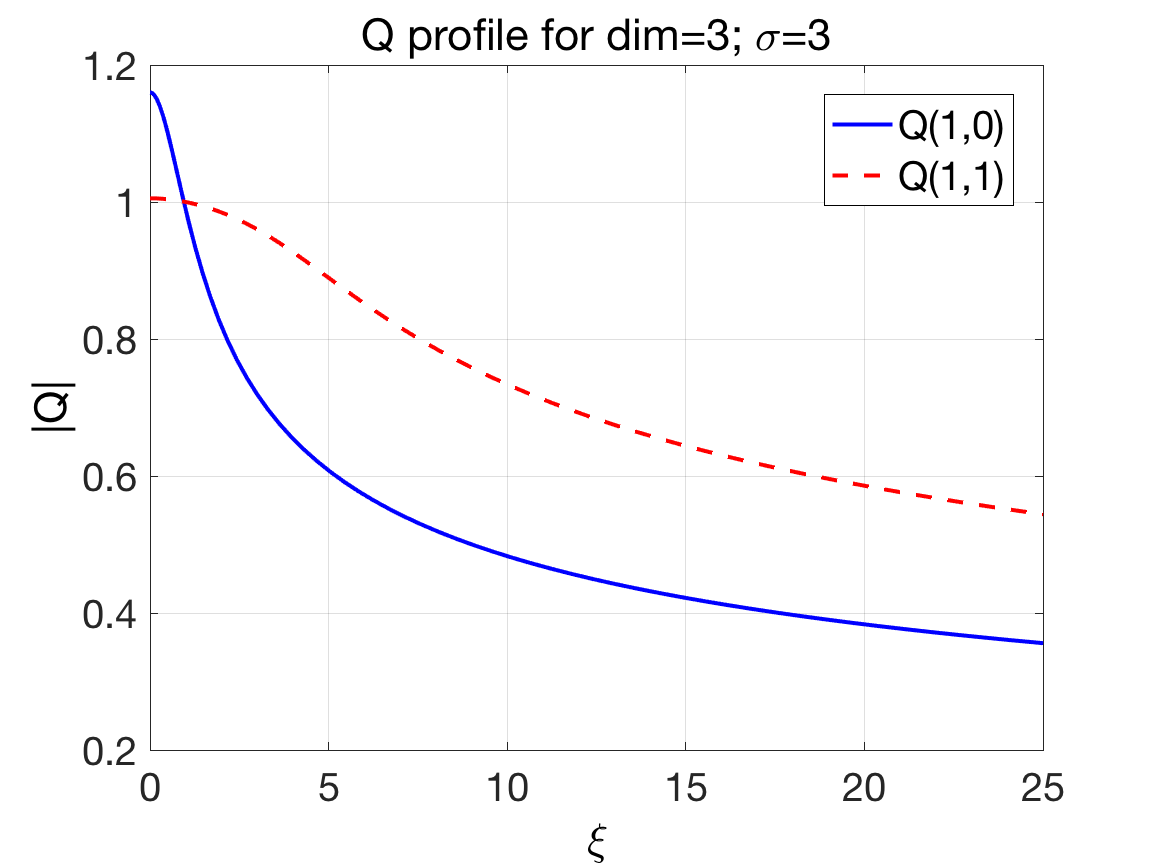}
\includegraphics[width=0.42\textwidth]{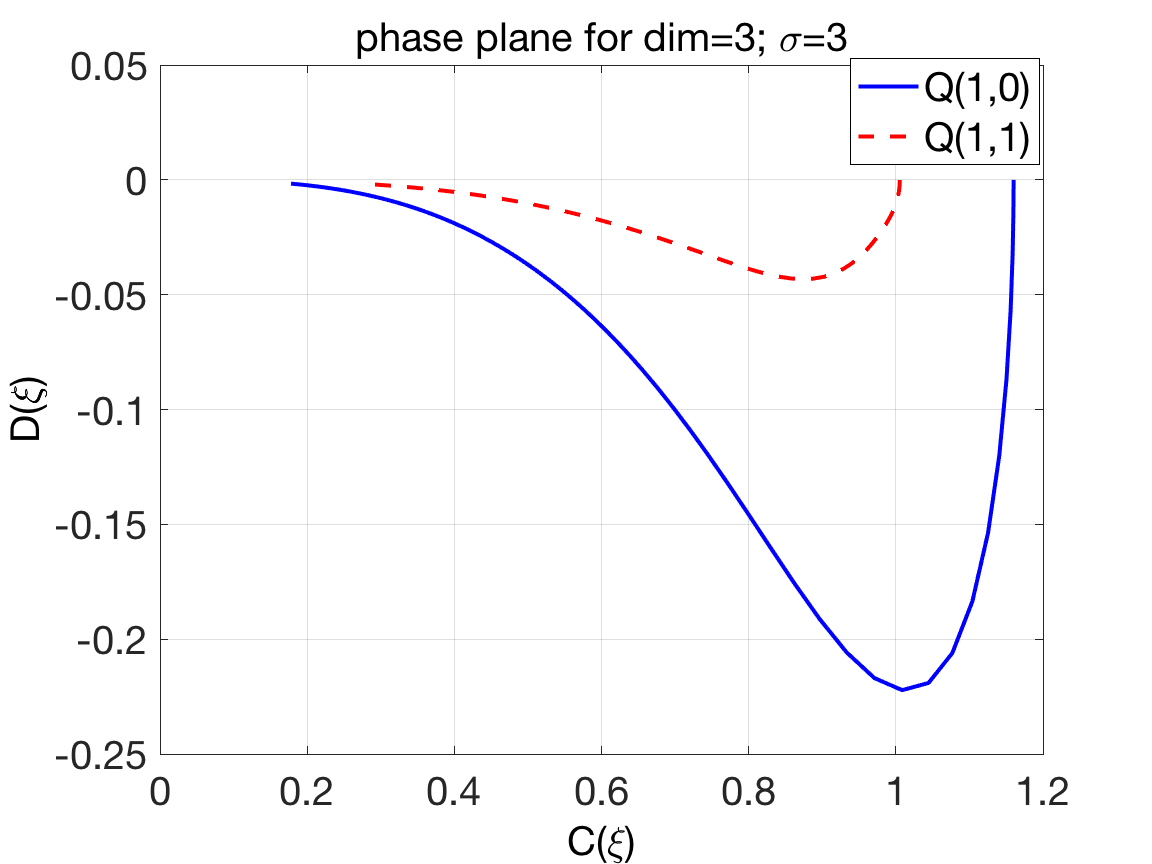}
\caption{ $Q$ profiles for $d=3$, $\sigma=3$ (left) and the phase plane for $Q$ (right).}
\label{Q3dp7}
\end{center}
\end{figure}

We next investigate how the values $Q(0) \defeq Q_{1,0}(0)$ and $a$ change with respect to the dimension $d$. The results are shown in Figure \ref{Q_cubic} (cubic), Figure \ref{Q_quintic} (quintic) and Figure \ref{Q_septic} (septic). Observe that both values produce a smooth curve from the energy-subcritical regime to the energy-supercritical regime. 

\begin{figure}
\begin{center}
\includegraphics[width=0.42\textwidth]{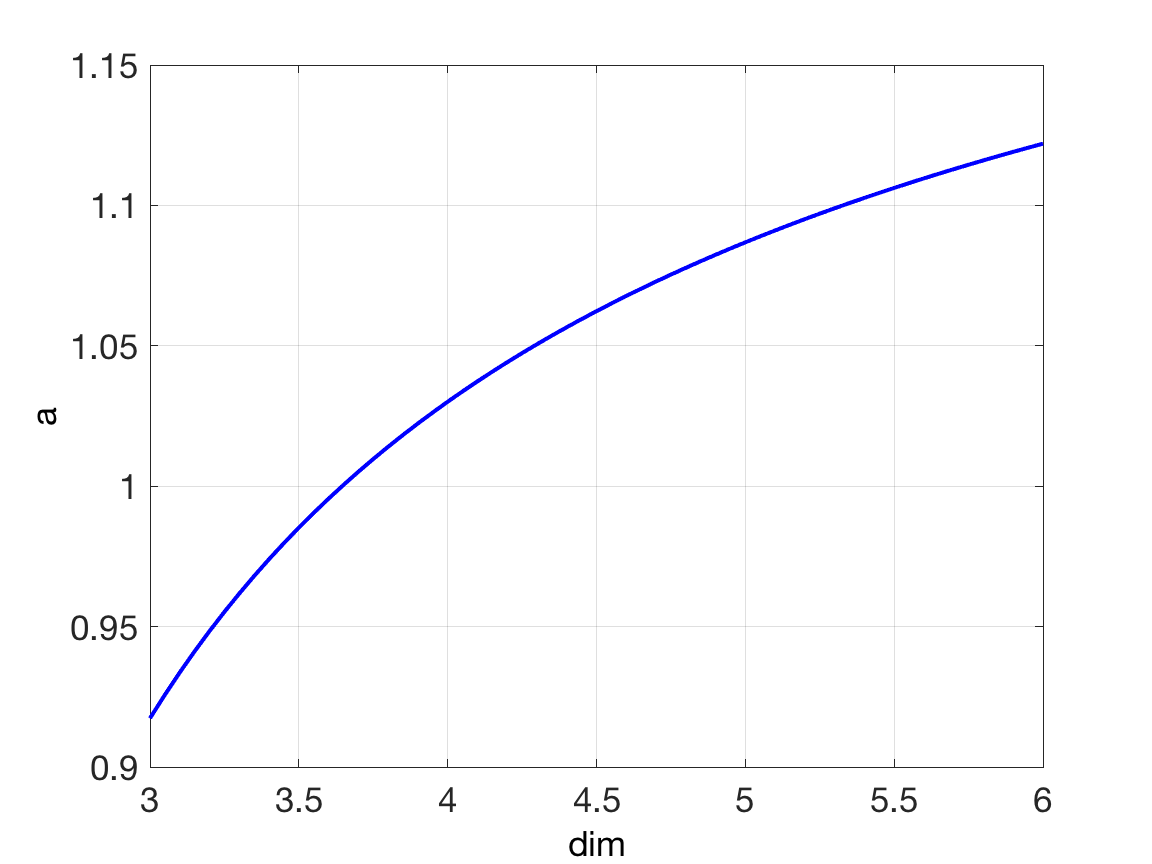}
\includegraphics[width=0.42\textwidth]{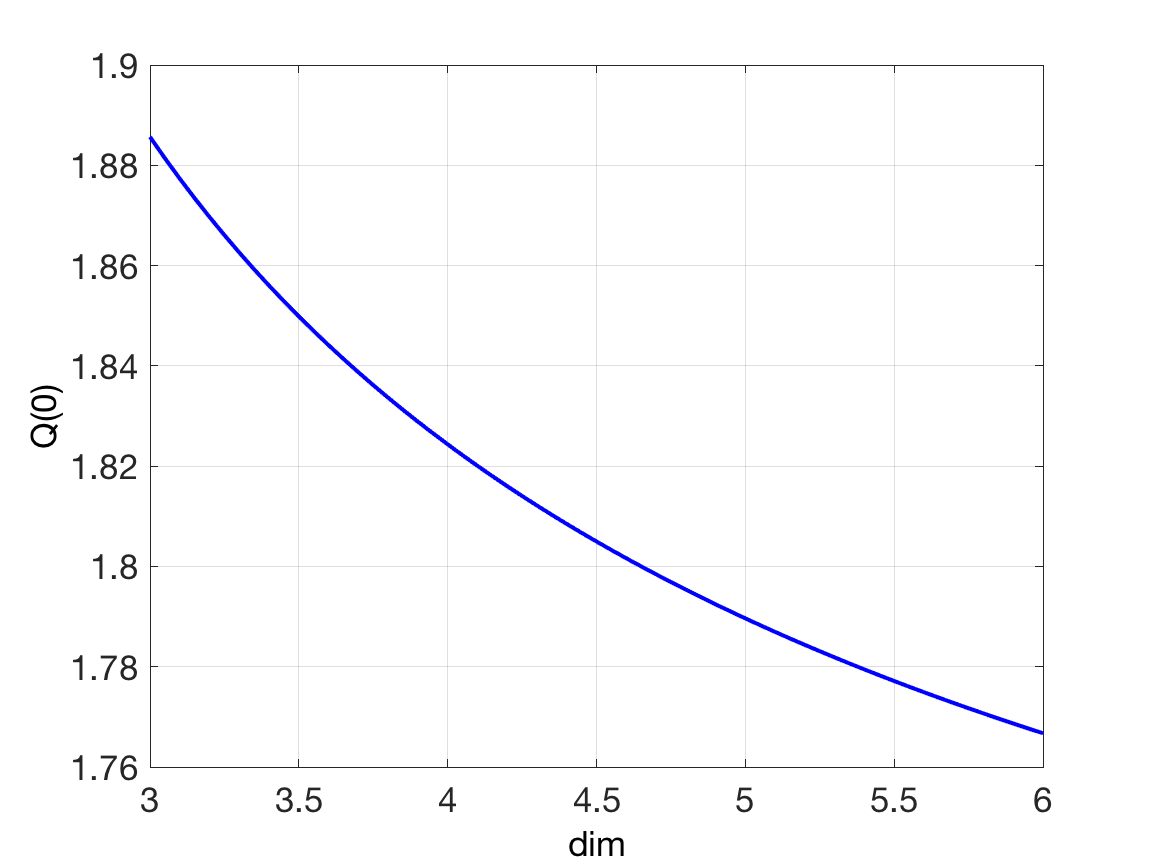}
\caption{ The change of $a$ and $Q(0) = Q_{1,0}(0)$ with respect to the dimension $d$ for the cubic case ($\sigma=1$).}
\label{Q_cubic}
\end{center}
\end{figure}

\begin{figure}
\begin{center}
\includegraphics[width=0.42\textwidth]{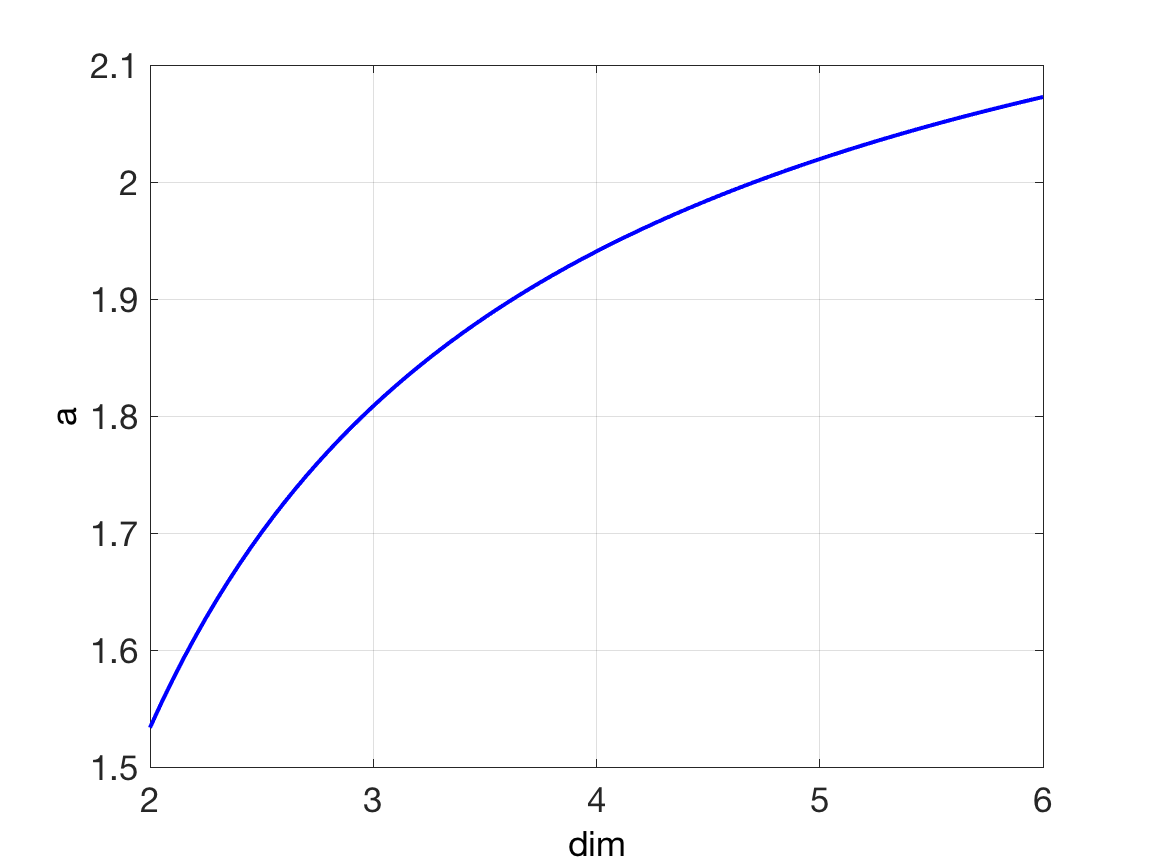}
\includegraphics[width=0.42\textwidth]{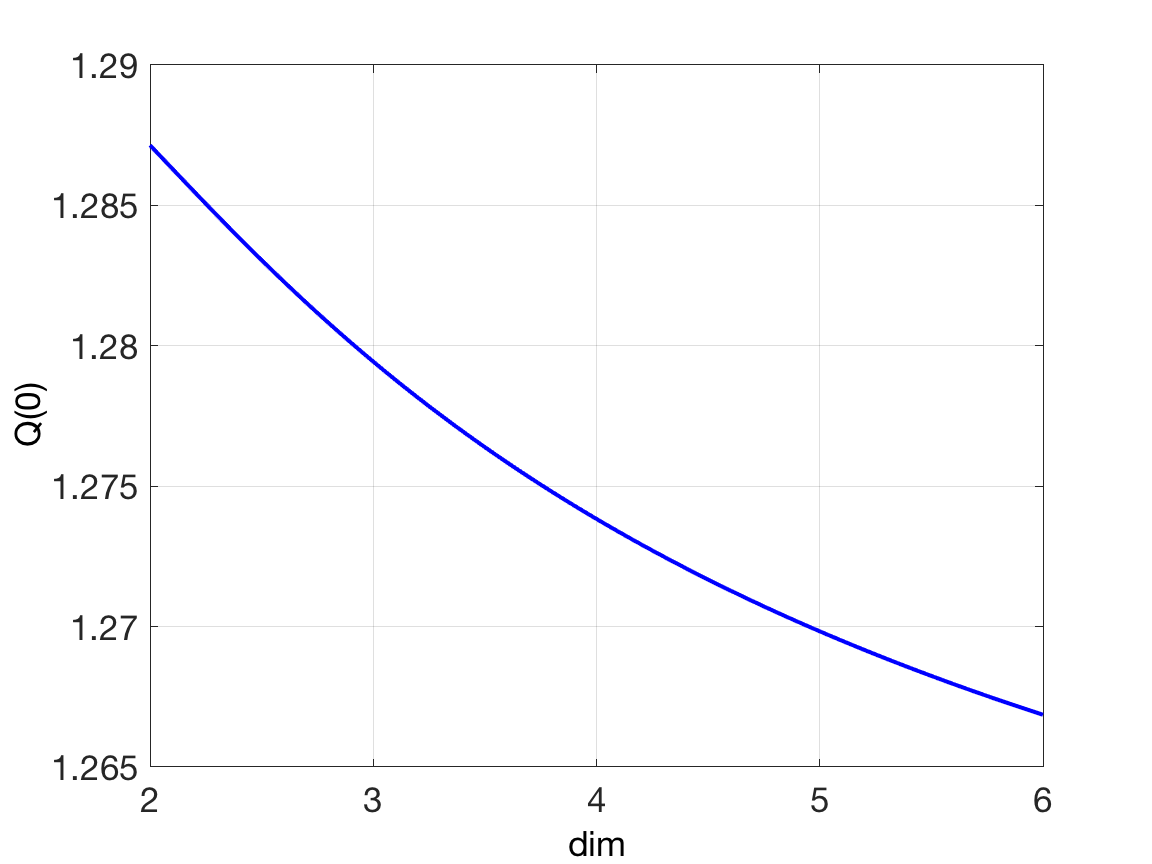}
\caption{ The change of $a$ and $Q(0) = Q_{1,0}(0)$ with respect to the dimension $d$ for the quintic case ($\sigma=2$).}
\label{Q_quintic}
\end{center}
\end{figure}

\begin{figure}
\begin{center}
\includegraphics[width=0.42\textwidth]{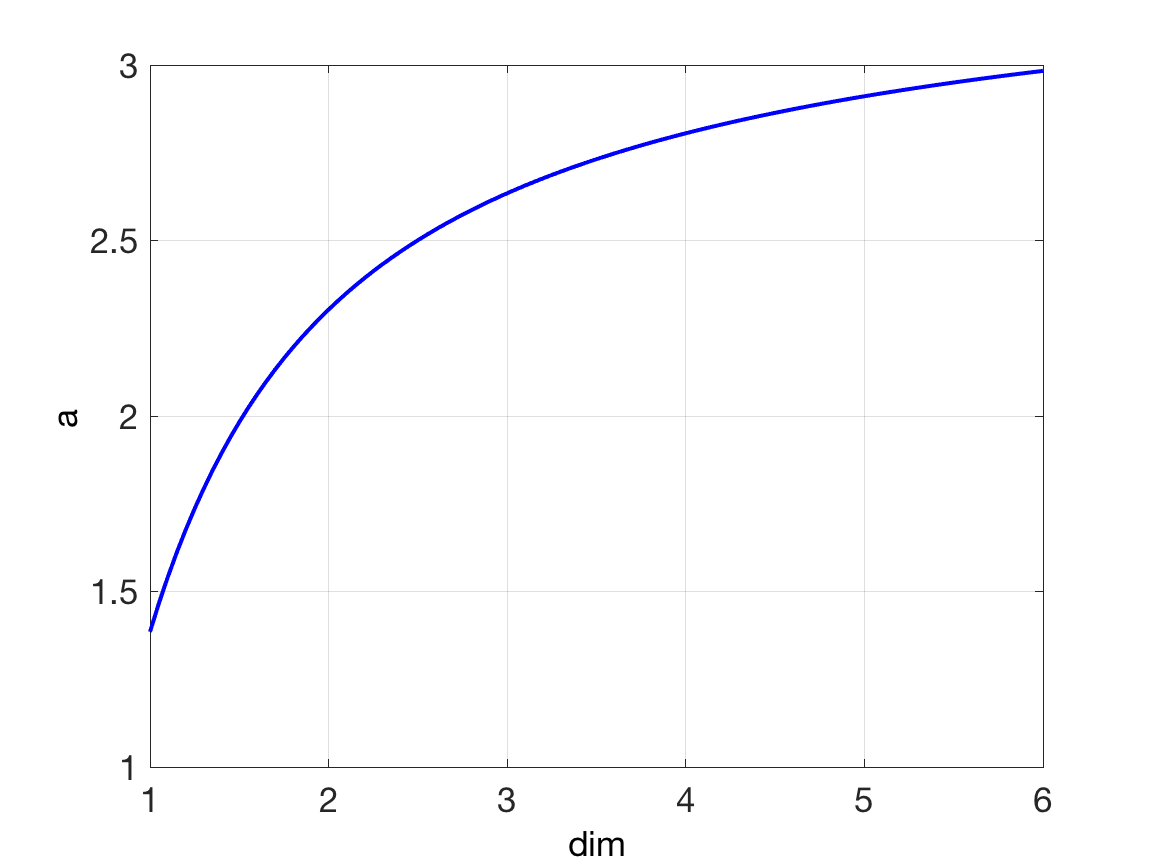}
\includegraphics[width=0.42\textwidth]{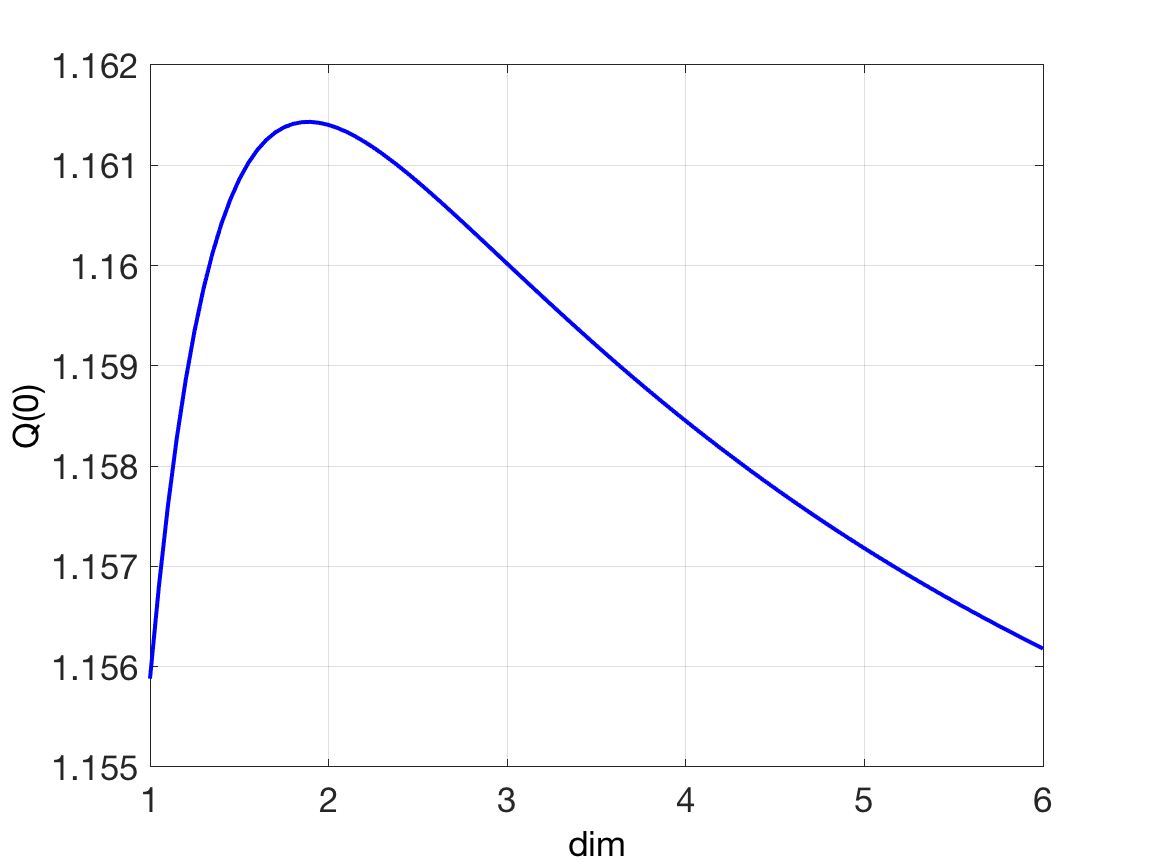}
\caption{ The change of $a$ and $Q(0) = Q_{1,0}(0)$ with respect to the dimension $d$ for the septic case ($\sigma=3$).}
\label{Q_septic}
\end{center}
\end{figure}

\newpage

We next study more closely the values of (conserved) energy depending on the critical scaling index $s_c$. Recall that motivated by the scaling invariance, we seek the self-similar blow-up solutions of \eqref{NLS} of the form 
\begin{align}\label{sol_Q_u}
u(r,t)= \dfrac{1}{(\sqrt{2a(T-t)})^{\frac{1}{\sigma}}} \,  Q \left( \dfrac{r}{\sqrt{2a(T-t)}} \right) \, \textrm{exp} \left( i \theta +\dfrac{i}{2a} \log \dfrac{T}{T-t} \right). 
\end{align}
The rescaled mass and energy for $u(r,t)$ in terms of $Q(\xi)$ are
\begin{align}
M\left[u(t)\right]=C_{a,d}(T-t)^{s_c}\int_0^{\infty} |Q(\xi)|^2 \, \xi^{d-1} \,d\xi,
\end{align}
\begin{align}\label{rescaled Hamiltonian}
E\left[u(t)\right]=C_{a,d} (T-t)^{s_c-1}\int_0^{\infty} \left( |Q_{\xi}|^2-\frac{1}{\sigma+1} |Q|^{2\sigma+2} \right) \xi^{d-1}\,d\xi=C_{a,d} (T-t)^{s_c-1}H[Q].
\end{align}

For $s_c<1$, $(T-t)^{s_c-1} \rightarrow \infty$ as $t \rightarrow T$. From the energy conservation in \eqref{rescaled Hamiltonian}, $H[Q]$ should be zero, since $E\left[ u(t) \right]$ remains constant in time $t$. For $s_c=1$, $(T-t)^{s_c-1}=1$, and thus, $H[Q]$ should be a constant. For $s_c>1$, $(T-t)^{s_c-1} \rightarrow 0$ as $t \rightarrow T$, and thus,
$$
\int_0^{\xi} \left(|Q_{\xi}|^2-\frac{1}{\sigma+1} |Q|^{2\sigma+2} \right) \xi^{d-1} \, d\xi \rightarrow \infty \quad \mbox{as} \quad \xi \rightarrow \infty.
$$
Figure \ref{Q_L} justifies the above reasoning. We calculate the energy of $Q$ in the quintic NLS case ($\sigma=2$) in various dimensions, truncated at different lengths of the interval $K$. The top left subplot shows that in the 2d case the energy $E[Q]$ goes to zero as the interval $K \rightarrow \infty$, this is consistent for the energy-subcritical setting, here $s_c=\frac{1}{2} < 1$. The top right subplot shows that in the 3d case the energy $E[Q]$ goes to a constant as $K \rightarrow \infty$, this is the energy-critical case, $s_c=1$. The bottom left subplot shows that in the 4d case the energy $E[Q]$ goes to negative infinity linearly as $K \rightarrow \infty$, here $s_c=\frac32 > 1$. The bottom right subplot shows that in the 5d case the energy $E[Q]$ goes to negative infinity quadratically as $K \rightarrow \infty$, here $s_c=2 > 1$.  Furthermore, the bottom two subplots justify that the solution $Q$ decays with a rate of $|Q| \sim \xi^{-1/\sigma}$, since substituting $|Q|=\xi^{-1/\sigma}$ into \eqref{rescaled Hamiltonian} and integrating from $K_0$ to $\infty$, one gets
$$
E(Q) \sim \int_{K_0}^{\infty} \xi^{2s_c-3} d\xi.
$$
This matches the linear decay for $s_c=\frac{3}{2}$ (e.g., see $d=4$ and $\sigma=2$ in Figure \ref{Q_L}), and the quadratic decay for $s_c=2$ (e.g., see $d=5$ and $\sigma=2$ in Figure \ref{Q_L}).

\begin{figure}
\begin{center}
\includegraphics[width=0.42\textwidth]{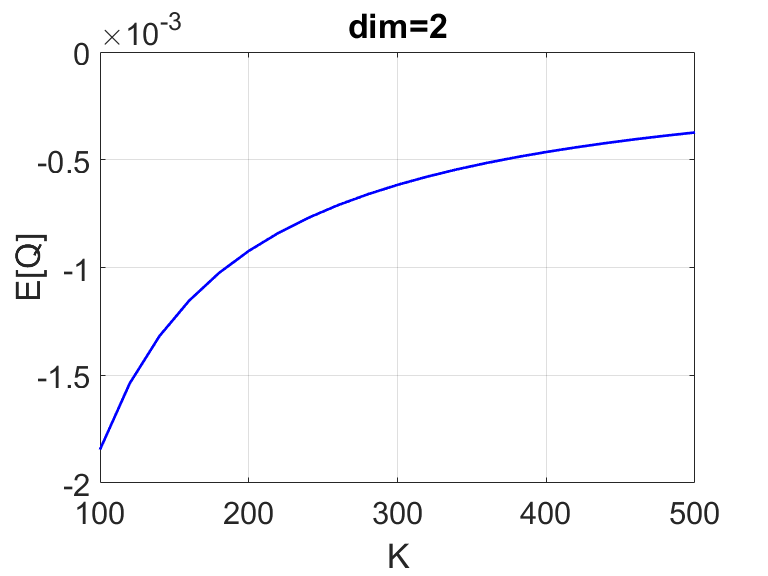}
\includegraphics[width=0.42\textwidth]{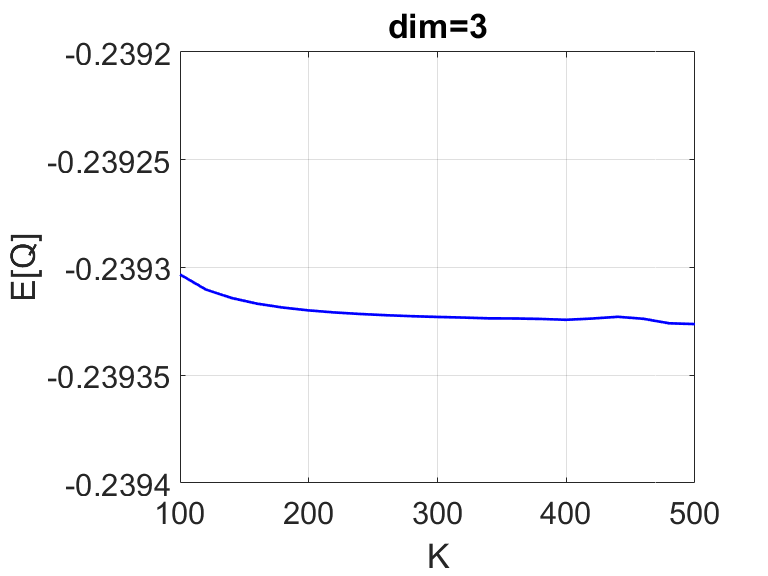}
\includegraphics[width=0.42\textwidth]{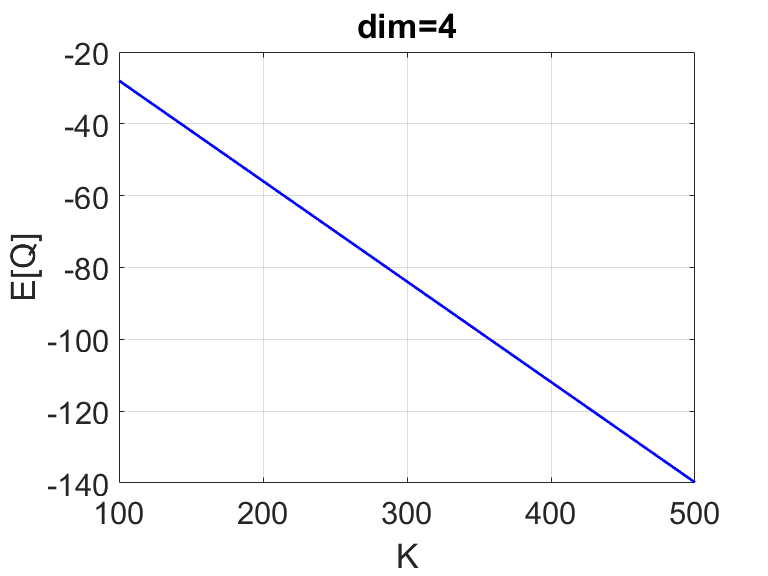}
\includegraphics[width=0.42\textwidth]{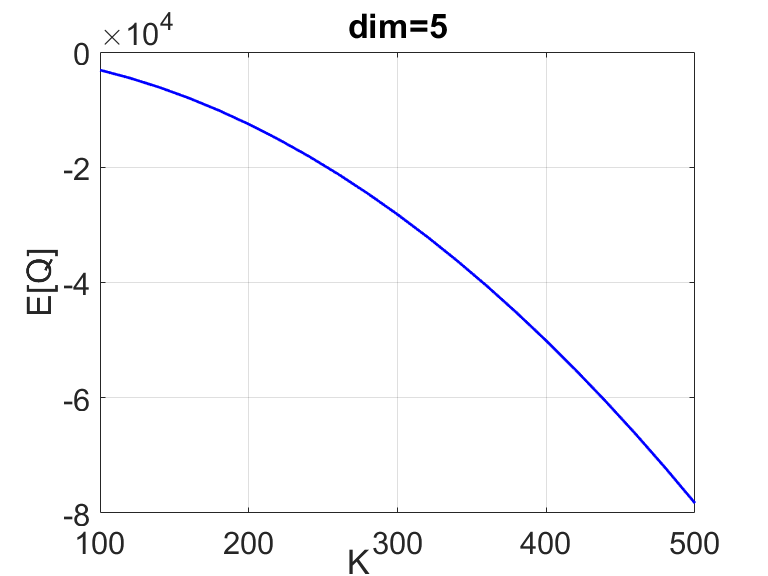}
\caption{ The change of the energy with respect to the computational interval $K$ in dimension $d=2,3,4,5$ for the quintic case ($\sigma=2$).}
\label{Q_L}
\end{center}
\end{figure}

\subsection{Further justification of the constant Hamiltonian for the energy critical case}
The conjecture that $H[Q]={const}$ for the energy critical case ($s_c=1$) can be justified by the following argument.

From \eqref{Q1 and Q2} the asymptotic behavior of $Q$ satisfies $Q(\xi)\approx C_0 \,\xi^{-\frac{i}{a}-\frac{1}{\sigma}}$ for $\xi \gg 1$. Assuming $H[Q]$ being finite, we have 
\begin{align}\label{H[Q] split}
H[Q]=\int_0^{\xi_0} \bigg(|Q_{\xi}|^2-\frac{1}{\sigma+1} |Q|^{2\sigma+2} \bigg) \xi^{d-1} \, d \xi 
+\int_{\xi_0}^{\infty} \left(|Q_{\xi}|^2-\frac{1}{\sigma+1} |Q|^{2\sigma+2}\right) \xi^{d-1} \, d \xi < \infty.
\end{align}
The first integral of \eqref{H[Q] split} gives a constant. Since neither of the terms $|Q_{\xi}|^2$ and $|Q|^{2\sigma+2}$ are integrable, these two terms must cancel each other in the second integral. The direct calculation {for the second integral in \eqref{H[Q] split} gives} 
\begin{align}\label{Q coefficient}
C_0= \left[\left(\sigma+1\right) \left(\frac{1}{\sigma^2}+\frac{1}{a^2}\right) \right]^{\frac{1}{2\sigma}}.
\end{align}

Figure \ref{Q constant} shows the difference of $C_0$ numerically calculated from \eqref{Q compute} ($C_0 :=C_{\textrm{num}}$) and $C_0$ calculated from \eqref{Q coefficient} ($C_0:=C_{\textrm{pred}}$). Observe that the difference is on the order of $10^{-9}$ for $Q_{1,0}$ in both $d=3$ (blue solid line) and $d=4$ (red dash line). Moreover, both are decreasing as  $\xi$ is increasing.

\begin{figure}
\begin{center}
\includegraphics[width=0.45\textwidth]{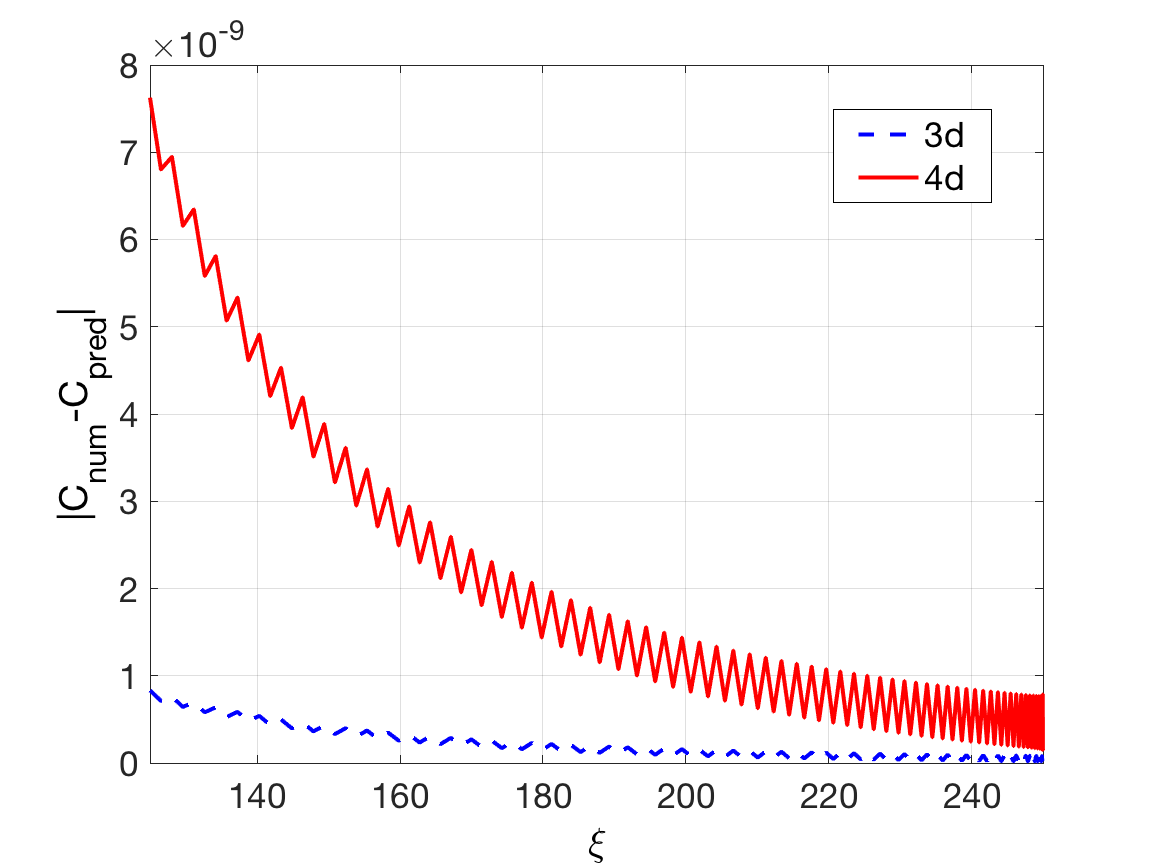}
\includegraphics[width=0.45\textwidth]{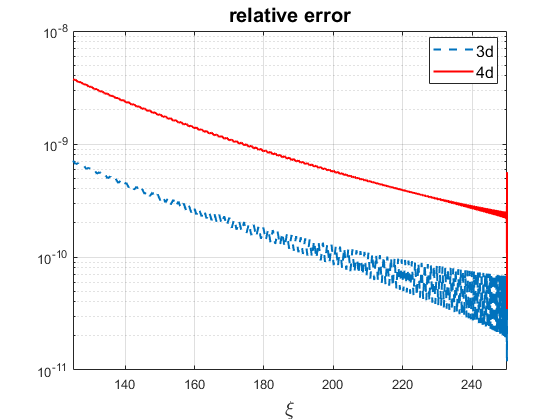}
\caption{Left: $|C_{\textrm{num}}-C_{\textrm{pred}}|$ for the 3d (blue solid line) and the 4d (red dash line) energy-critical cases. One can see that both errors are on the order of $10^{-9}$. Right: relative error $|\frac{C_{\textrm{num}}}{C_{\textrm{pred}}}-1|$ of the above quantities.}
\label{Q constant}
\end{center}
\end{figure}

Now that we have developed a certain description of profiles $Q$, in particular, numerical identification and properties of $Q_{1,0}$, we proceed to investigating the dynamics of stable blow-up solutions in the mass-supercritical cases.

\section{Blow-up dynamics for the NLS equation}\label{S:4}

\subsection{Numerical method}
We compute the rescaled equation (\ref{DRNLS}) and then reconstruct the solution $u(x,t)$ from the rescaled equation, since the solution $v(\xi,\tau)$ to the rescaled equation exists globally in time. This method is called the {\it dynamic rescaling method}, which was first introduced by LeMesurier, Papanicolaou, Sulem and Sulem in \cite{MPSS1986}, \cite{LePSS1987}. This method needs the prior knowledge of the scaling of the singular part of the solution (scaling property). 

There are several other methods that can track the blow-up dynamics. For example, one may use the adaptive mesh method in \cite{ADKM2003}, moving mesh method in \cite{BCR1999}, iterative grid redistribution method in \cite{RW2000} for multi-dimensions and \cite{DG2009} for the 1d case, see also discussion on numerical treatments in \cite{BKW2006}. These methods, unlike the dynamic rescaling method, do not need the prior knowledge of the scaling of the singular part and can deal with more general blow-up dynamics cases. Here, however, we consider generic data which leads to the peak-type solutions, i.e., solutions attain their maximum at the origin, see \cite[Chapter14]{F2015}, and the prior knowledge is already given by rescaling. Therefore, the dynamic rescaling method is more effective for our task.

We fix the value $\|v(\xi,\tau)\|_{L^{\infty}_{\xi}} \equiv 1$ in time $\tau$ as in  \cite{LePSS1988} and \cite{KSZ1991}, and write  
\begin{align}
L(t)=\left(\frac{1}{\|u(t)\|_{L^\infty}}\right)^{\sigma},
\end{align}
and
\begin{align}\label{NLS a compute}
a(\tau)= -{\sigma}\Im\left( \bar{v} \Delta v \right)_{\vert_{(0,\tau )}}.
\end{align}

The equation (\ref{DRNLS}) can be written as 
\begin{align} \label{DRNLSF}
i \,  v_{\tau} +\Delta v +\mathcal{N}(v)=0, \quad \tau \in [0, \infty), \quad \xi \in [0, \infty),
\end{align}
with
$$
\mathcal{N}(v)=ia(\tau)\left(\xi v_{\xi}+\frac{v}{\sigma}\right)+ |v|^{2\sigma}v.
$$
The initial value $v_0(\xi)$ is calculated from \eqref{rescaling initial} by setting $\| v_0\|_{L^{\infty}}=1$.

Let $N\in \mathbb{Z}$ be a fixed integer, $[0,L_D]$ the computational domain with $L_D\gg1$, and $h = \frac{L_D}{N}$ the uniform grid size in space. Let $v_j\approx v(jh,\tau)$ denote the semi-discrete approximate solution at $jh, j = 0,1,\cdots, N$. The spatial derivative is approximated by the sixth order central difference scheme:
\begin{align}
 v_{\xi}(jh,\tau) \approx D^{(1)}_6 v_j = \frac{1}{60h}[-v_{j-3} + 9 v_{j-2} - 45 v_{j-1} + 45 v_{j+1} - 9 v_{j+2} + v_{j+3}],
 \end{align}
 \begin{align}
 & v_{\xi \xi}(jh,\tau)  \approx D^{(2)}_6 v_j \\
                      = & \frac{1}{180h^2}[2v_{j-3} - 27 v_{j-2} + 270 v_{j-1} -490 v_j + 270 v_{j+1} - 27 v_{j+2} + 2v_{j+3}], \nonumber
\end{align}
and the Laplacian operator is approximated by
\begin{align}
\Delta v(jh, \tau) \approx \Delta_h v_j = v_{\xi \xi}(jh,\tau)+\dfrac{d-1}{jh} v_{\xi}(jh,\tau).
\end{align}
When the grid points beyond the right-hand side computational domain are needed, we set up the fictitious points obtained by extrapolation, 
$$
v_{N+2}=8v_{N+1}-28v_N+56v_{N-1}-70v_{N-2}+56v_{N-3}-28v_{N-4}+8v_{N-5}-v_{N-6}.
$$
For the grid points beyond the left-hand side computational domain, note that $v(\xi)$ is radially symmetric, and thus, we use the fictitious points $v_{-j}=v_{j}$. The singularity at $\xi=0$ in the Laplacian term $\Delta_h$ is eliminated by the L'Hospital's rule
$$
\lim_{\xi \rightarrow 0} \dfrac{d-1}{\xi} v_{\xi} = (d-1)v_{\xi \xi}.
$$

The time discretization is similar to our previous work \cite{RYZ2017} for the mass-critical case (see also in \cite{SS1999}). Let $\Delta \tau$ denote the uniform time step with respect to the rescaled time $\tau$ and $\tau_{m} = m \cdot \Delta \tau, m=1,2,\cdots$. Let $v^{(m)}_{j}\approx v(jh,m \cdot \Delta \tau)$ be the approximate solution at $(jh,m\cdot \Delta \tau)$, and $L_m$ the approximation of $ L(\tau_m)$. The time evolution of (\ref{DRNLSF}) can be approximated by the second order Crank-Nicolson-Adam-Bashforth method:
\begin{align}\label{AB}
 i\frac{v^{(m+1)}_j-v^{(m)}_j}{\Delta \tau}+\frac{1}{2}\left(\Delta_h v_j^{(m+1)}+{\Delta}_h v_j^{(m)} \right)+\frac{1}{2}\left( 3\mathcal{N}(v_j^{(m)})-\mathcal{N}(v_j^{(m-1)})\right)=0,
\end{align}
given the initial condition $v^{(0)}$ and $v^{(1)}$. Here, $v^{(1)}$ is obtained by the standard second order explicit Runge-Kutta method (RK2). We introduce the two-step Adams predictor--corrector method as in \cite[Chapter 28]{F2015} to increase the accuracy and 
stability, while remaining a second order scheme in time:
\begin{align}
&i\frac{v^{(m+1)}_{\text{pred},j}-v^{(m)}_j}{\Delta \tau}+\frac{{\Delta}_h v^{(m+1)}_{\text{pred},j}+{\Delta}_h v^{(m)}_j}{2}+  \dfrac{3}{2}\mathcal{N}(v^{(m)}_j)-\dfrac{1}{2}\mathcal{N}(v^{m-1}_j) = 0, \quad (\text{P}) \label{predictor} \\
&i\frac{v^{(m+1)}_j-v^{(m)}_j}{\Delta \tau}+\frac{{\Delta}_h v^{(m+1)}_j+{\Delta}_h v^{(m)}_j}{2} + \frac{1}{2}\mathcal{N}(v^{(m+1)}_{\text{pred},j}) + \frac{1}{2}\mathcal{N}(v^{(m-1)}_j) = 0. \quad (\text{C}) \label{corrector}
\end{align}

We use the method in \cite{LePSS1988} to reconstruct the solution in $(r,t)$ variable. After getting the value $v^{(m+1)}$, we update the value $a^{(m+1)}$ from \eqref{NLS a compute}. 
From \eqref{E:a}, $\ln L(\tau_{m+1})$ is obtained by the second order trapezoid rule:
\begin{align} \label{NLS lnL}
\ln L(\tau_{m+1})=\ln L(\tau_m) + \frac{\Delta \tau}{2}(a^{(m+1)}+a^{(m)}).
\end{align}
Then, we have $L(\tau_{m+1})=\exp(\ln L(\tau_{m+1}))$. Denoting $\Delta t_{m+1}:=t_{m+1}-t_m$, we obtain this difference from the last equation of \eqref{rescaling initial}
\begin{align}
\Delta t_{m+1}= \Delta \tau L^2(\tau_{m+1}).
\end{align}
Thus, the mapping for rescaled time $\tau$ back to the real time $t$ is calculated as 
\begin{align}
t(\tau_{m+1}) = t((m+1)\Delta \tau) := \sum_{j=1}^{m+1} \Delta t_j = \Delta \tau\sum_{j=1}^{m+1}  L(\tau_j)^2.
\end{align}
Finally, the numerical solution $u_j^{(m+1)} \approx u(\xi_jL(\tau_{m+1}),\tau_{m+1})$ can be reconstructed.

Note that as time evolves, the time difference $T - t(\tau_n)$ will become smaller and smaller, and eventually reach saturation level (with little change), therefore, we treat the stopping time $t(\tau_{\text{end}}) = t(\tau_{M})$ as the blow-up time $T$, where $M$ is the total number of iterations when reaching the stopping condition ($L<10^{-24}$). Then, we can take
\begin{align}
T = t(\tau_{\text{end}}) = t(M \Delta \tau) = \Delta \tau\sum_{j=1}^M  L(\tau_j)^2.
\end{align}
Consequently for any $t_i$, we calculate $T-t_i$ as 
\begin{align}\label{T-t}
T-t_i=\sum_{j=i+1}^M \Delta t_j = \Delta \tau \sum_{j=i+1}^M  L(\tau_j)^2.
\end{align}
This indicates that instead of recording the cumulative time $t_i$, we only need to record the elapsed time between the two recorded data points, i.e., $\Delta t_i =t_{i+1}-t_{i}$. By doing so, it can avoid the loss of significance when adding a small number onto a larger one.

We construct the artificial boundary condition on the right-hand side from the argument in \cite{KSZ1991} and \cite[Chapter 6.1]{SS1999}, since otherwise, the solution to (\ref{DRNLS}) has to be solved in the entire space $\xi >0$. For $\xi \gg 1$, the nonlinear term and the Laplacian terms is of the higher order compared with the other linear terms, and consequently, can be negligible. Consequently, the equation (\ref{DRNLS}) is reduced to 
\begin{align}\label{DRNLS_L}
v_{\tau}+a(\tau)\left( \frac{v}{\sigma} +\xi v_{\xi}\right)=0
\end{align}
near $\xi=L_D$, the right endpoint of the computational domain. The equation (\ref{DRNLS_L}) can be solved exactly (see \cite{SS1999}, \cite{KSZ1991})
\begin{align}\label{DRNLS_abc sol}
v(\xi,\tau)= v\left(\xi \frac{L(\tau)}{L(\tau_0)}, \tau_0 \right)\left( \frac{L(\tau)}{L(\tau_0)}\right)^{\frac{1}{\sigma}},
\end{align}
which suggests that at $\xi=L_D$,
\begin{align}\label{eqn:v_mp1}
v(L_D,\tau_{m+1})=v\left(L_D \frac{L(\tau_{m+1})}{L(\tau_m)},\tau_m \right)\left( \frac{L(\tau_{m+1})}{L(\tau_m)}\right)^{\frac{1}{\sigma}}.
\end{align}
Note that $a^{(m)}=-\frac{L'(\tau_m)}{L(\tau_m)}$ from \eqref{E:a}, and $\frac{L(\tau_{m})}{L(\tau_{m-1})}$ can be approximated with the  second order accuracy by 
$$\frac{L(\tau_{m})}{L(\tau_{m-1})}=e^{-\frac{\Delta \tau}{2}(a^{(m-1)}+a^{(m)})}+ O(\Delta \tau^3)$$
from \eqref{NLS lnL}. 
The value $L(\tau_{m+1})$ can be approximated by the second order central difference
\begin{align}\label{NLS L predict}
L(\tau_{m+1})=L(\tau_{m-1})+2\Delta \tau L_{\tau}(\tau_m) +O(\Delta \tau^3).
\end{align}
Multiplying the equation \eqref{NLS L predict} by $1/L(\tau_m)$, we obtain
\begin{align}\label{NLS L predict 2}
\dfrac{L(\tau_{m+1})}{L(\tau_{m})}=\dfrac{L(\tau_{m-1})}{L(\tau_{m})}-2\Delta \tau a^{(m)}+O(\Delta \tau^3).
\end{align}
Therefore, the right-hand side boundary condition is approximated with the second order accuracy 
\begin{align}\label{abc_RNLS}
v(L_D,\tau_{m+1})=v\left(L_D (e^{\frac{\Delta \tau}{2}(a^{(m-1)}+a^{(m)})}-2\Delta \tau a^{(m)}),\tau_m \right) \left( (e^{\frac{\Delta \tau}{2}(a^{(m-1)}+a^{(m)})}-2\Delta \tau a^{(m)})\right)^{\frac{1}{\sigma}}.
\end{align}
{Since we are simulating the generic blow-up solutions, the amplitude $\|u\|_{L^{\infty}}$ is increasing. Thus, for all $\tau>0$, the term $\frac{L(\tau_{m+1})}{L(\tau_m)}<1$.} Taking $\zeta=L_D\frac{L(\tau_{m+1})}{L(\tau_m)}$,  then $\zeta$ must be within the computational domain $[0,L_D]$, but not necessarily to be one of the grid points. A cubic spline interpolation is adopted to evaluate $v(\zeta, \tau_m)$ and consequently lead to $v(L_D,\tau_{m+1})$ from (\ref{eqn:v_mp1}).

An alternative method for obtaining the artificial boundry condition is to solve the equation \eqref{DRNLS_L} numerically, since at the point $\xi=L_D$, it reduces to the ODE with respect to $\tau$. In \cite{LPSSW1991}, the authors solved the equation \eqref{DRNLS_L} by using the second order Adam-Bashforth method
\begin{align}\label{abc_AB}
 v(L_D,\tau_{m+1})=v(L_D,\tau_m) -  
\dfrac{1}{2} \Delta \tau\bigg[ &3a^{(m)}\left(\dfrac{v(L_D,\tau_m)}{\sigma} + L_D  v_{ \xi}(L_D,\tau_m) \right)  \\
&- a^{(m-1)}\left(\dfrac{v(L_D,\tau_{m-1})}{\sigma}+L_D v_{ \xi}(L_D,\tau_{m-1}) \right)\bigg], \nonumber
\end{align}
where the terms $v(L_D,\tau_n)\approx v_N^{(n)}$ and $v_{\xi}(L_D,\tau_n)$ is calculated by the six order central difference in space from $v(L_D,\tau_n)$, where $n=m-1$ and $m$.

While both numerical boundary conditions \eqref{abc_RNLS} and \eqref{abc_AB} are of the second order accuracy and lead to the similar results, our numerical experiments suggest that using the method \eqref{abc_RNLS} allows us to take a larger time step $\Delta \tau$.

\subsection{The rescaling of $Q$ and $a$.}
Recall the solution $u(r,t)$ to the equation \eqref{NLS} satisfies the self-similar form
\begin{align}\label{NLS self-similar}
u(x,t) = \dfrac{1}{L(t)^{\frac{1}{\sigma}}} Q\left(\frac{x}{L(t)}\right) \exp \left({i \theta + \frac{i}{2a}\log \frac{T}{T-t}} \right),
\end{align}
where $L(t)$ is predicted to be
\begin{align}\label{blowup rate}
L_{\text{pred}}(t) \approx (2a(T-t))^{\frac{1}{2}}
\end{align}
from \cite{LePSS1988}.

Suppose $Q(\xi)$ is the profile from solving \eqref{Q compute}, and $\tilde{Q}(\eta)$ is another profile with $\| \tilde{Q}\|_{L^{\infty}}=\| v_0(0)\|_{L^{\infty}}$ (e.g., $\| \tilde{Q}\|_{L^{\infty}}=1$). From \eqref{NLS self-similar}, we have a family of the $Q$ profiles
\begin{align}\label{Q rescale}
Q(\xi)= \left(\dfrac{Q(0)}{\tilde{Q}(0)} \right) \tilde{Q} \left( \xi \left( \dfrac{Q(0)}{\tilde{Q}(0)} \right)^{{\sigma}} \right)
\end{align}
and consequently, the value $\tilde{a}$ corresponding to the value $a$ from \eqref{Q compute} is
\begin{align}\label{NLS a rescale}
\tilde{a}=a\left[\dfrac{|v_0(0)|}{Q(0)} \right]^{2{\sigma}}.
\end{align} 
For simplicity, we still use $Q$ to represent the family of $Q$ profiles, adding ``up to scaling".

\subsection{Numerical results.}
In this section, we list examples of initial data we choose and the quantities we track. We take $h=0.1$, $k=10^{-4}/2^{\sigma-2}$, $L_D=100$ for the dimension $d=2,3$, and $L_D=200$ for the dimension $d=4,5$, since larger dimensions may lead the approximation of the artificial boundary condition \eqref{abc_RNLS} or \eqref{abc_AB} being more reflective, and thus, need larger interval. Other choices of those parameters lead to the similar results (for example, we have tested for $h=0.05$ or $\Delta \tau=10^{-4}/2^{\sigma-1}$, see Figure \ref{NLS 3d5p error analysis}). The initial data is taken to be Gaussian $u_0=Ae^{-r^2}$ or rational function with fast enough decay $u_0=\frac{A}{(1+r^2)^4}$. Table \ref{Initial data} lists the examples of initial data together with the energy sign. 

\begin{table}[ht]
\begin{tabular}{|c|c|c|c|c|c|}
\hline
$d$&$\sigma$&$u_0$&$E[u_0]$&
$u_0$ &$E[u_0]$ \\
\hline
$3$&$1$ & $5e^{-r^2}$&$<0$&$\frac{6}{(1+r^2)^4}$&$>0$\\
\hline
$4$&$1$ & $6e^{-r^2}$&$<0$&$\frac{8}{(1+r^2)^4}$&$>0$\\
\hline
$5$&$1$ & $6e^{-r^2}$&$<0$&$\frac{8}{(1+r^2)^4}$&$>0$\\
\hline
$2$&$2$ &$2e^{-r^2}$&$>0$&$\frac{2.5}{(1+r^2)^4}$&$<0$\\
\hline
$3$&$2$ & $3e^{-r^2}$&$<0$&$\frac{3}{(1+r^2)^4}$&$>0$\\
\hline
$4$&$2$ & $3e^{-r^2}$&$<0$&$\frac{3}{(1+r^2)^4}$&$>0$\\
\hline
$3$&$3$ & $2.5e^{-r^2}$&$<0$&$\frac{2.5}{(1+r^2)^4}$&$>0$\\
\hline
\end{tabular}
\linebreak
\linebreak
\caption{Samples of initial conditions $u_0$ used in our simulations.}
\label{Initial data}
\end{table}
 
In our numerical simulations,  the following quantities are of the most interest:
\begin{itemize}
\item blow-up profiles $v(\xi,\tau)$ at different time when approaching blow-up time $T$;
\item blow-up rate $\ln L$ vs. $\ln(T-t)$;
\item the value of $a(\tau)$ with respect to the time $\tau$;
\item the dependence of the distance $\| |v(\tau)| -|Q| \|_{L^{\infty}_{\xi}}$ between $|Q|$ and $|v|$ on the rescaled time $\tau$;
\item the relative error between blow-up rate and the predicted blow-up rate 
$$
\mathcal{E}_{rel} = \left| \left(\dfrac{L(t)}{\sqrt{2\tilde{a} (T-t)}}\right)^{\frac{1}{\sigma}}-1 \right|,
$$
where the value $\tilde{a}$ is taken to be $\tilde{a} = a(\tau_{\text{end}})$, i.e., when the stopping criterion reaches
\begin{align}\label{stopping_criterion}
L < 10^{-24}.
\end{align}
\end{itemize}
Since we track the quantities $\ln(L)$, $\ln(T-t)$, $a(\tau)$ in the simulations, the relative error $\mathcal{E}_{rel}$ is calculated as 
\begin{align}\label{NLS relative error}
\mathcal{E}_{rel} = \left| \exp\left(\dfrac{1}{2\sigma}(2\ln(L)-\ln(T-t)-\ln2 -\ln \tilde{a}) \right) -1 \right|.
\end{align}
When each term in \eqref{NLS relative error} is moderate (not too large or too small), the accuracy is improved. Note that the value $\tilde{a}$ can also be calculated from the equation \eqref{NLS a rescale}, where the values of $a$ and $Q(0)$ are obtained from solving the equation \eqref{Q compute}. 

We numerically verify that the $\tilde{a}$ from these two methods only differs at an order $10^{-8}$. This indicates that the profile of $Q$ obtained in Section \ref{sec: Qprofile} is indeed the blow-up profile.

In Figure \ref{2d5p relative error compare}, the left picture shows the relative error in our simulations ending at $L \sim 10^{-24}$, and the right one is up to $L \sim 10^{-16}$. The relative error always stays small until the last few points close to our simulation ending time. It suggests that this phenomenon is due to the inaccurate estimation of $T$, instead of the solution behavior itself. In the rest of our work,  to make it less confusing, we only show the relative error up to $L \sim 10^{-20}$, while we end our simulation at $L < 10^{-24}$.

\begin{figure}
\begin{center}
\includegraphics[width=0.42\textwidth]{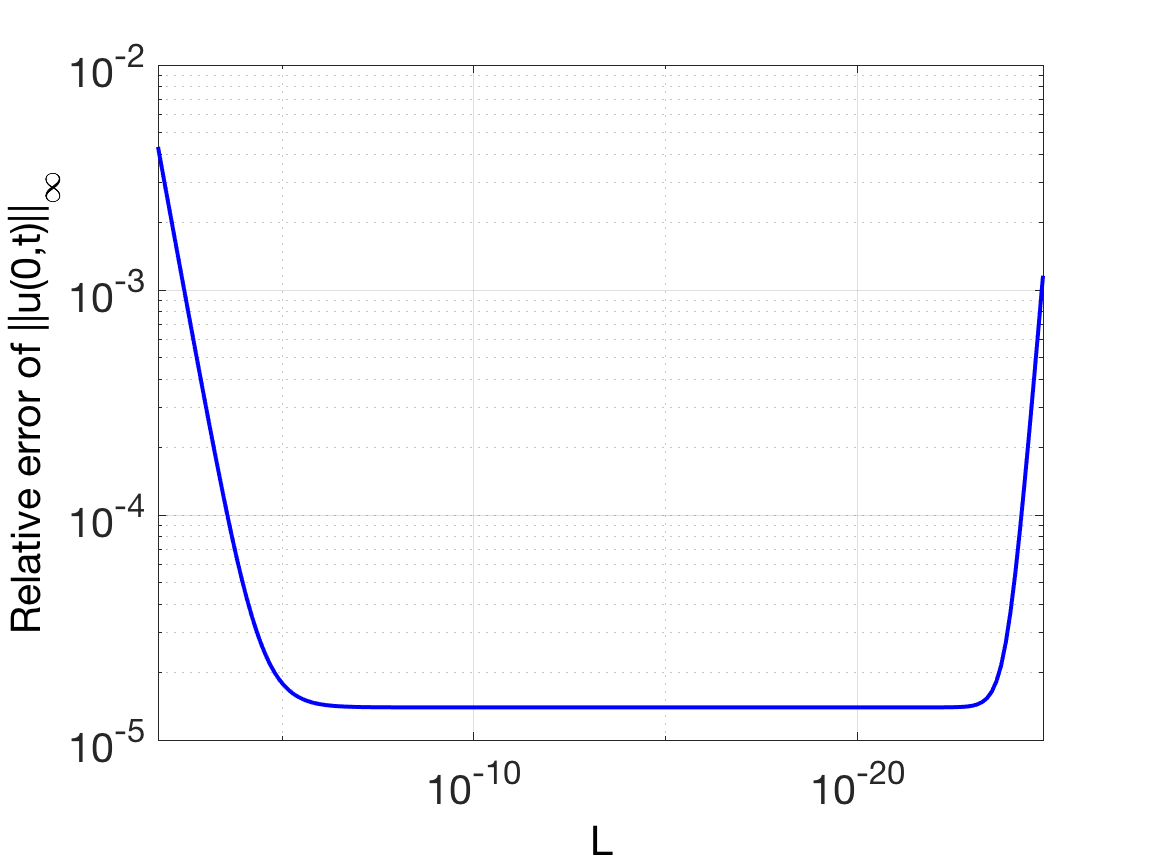}
\includegraphics[width=0.42\textwidth]{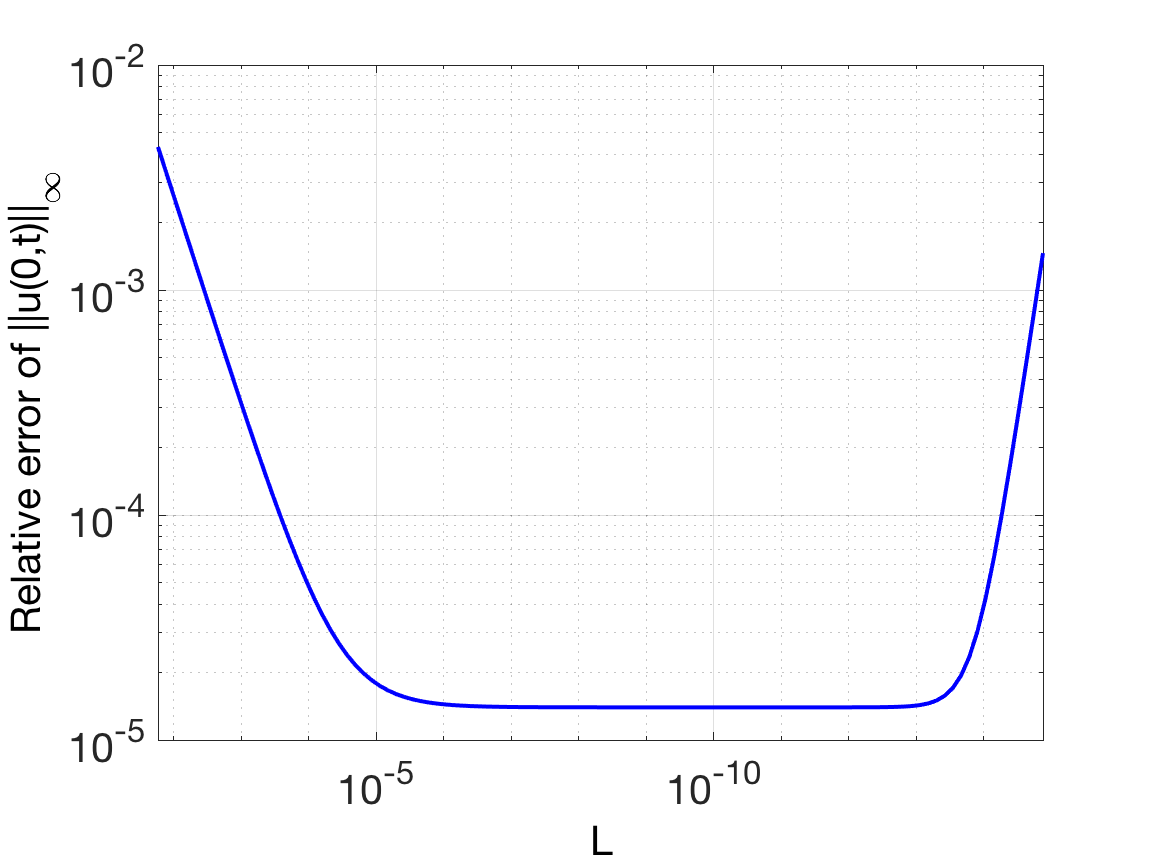}
\caption{ The relative error between the predicted blow-up rate and the numerical results for the 2d quintic case. The increase of the error at a very small $L$ is due to less accurate estimate of the blow-up time $T$.}
\label{2d5p relative error compare}
\end{center}
\end{figure}

\subsection{Consistency verification} We first report the data for the 3d cubic case, which has been considered in \cite{BCR1999}, \cite{LPSS1988} and \cite{LPSSW1991}, as the purpose of verifying the consistency. Figure \ref{3d3p profiles} and \ref{3d3p data} show the blow-up dynamics for the 3d cubic case. We also track the relative error $\mathcal{E}_{rel}$ for different $h$ and $k$. As an example, we list the 3d quintic case in Figure \ref{NLS 3d5p error analysis}, which shows that while the different space step size does not affect the relative error, shrinking the time step will lead to more accurate results.


\begin{figure}
\begin{center}
\includegraphics[width=0.42\textwidth]{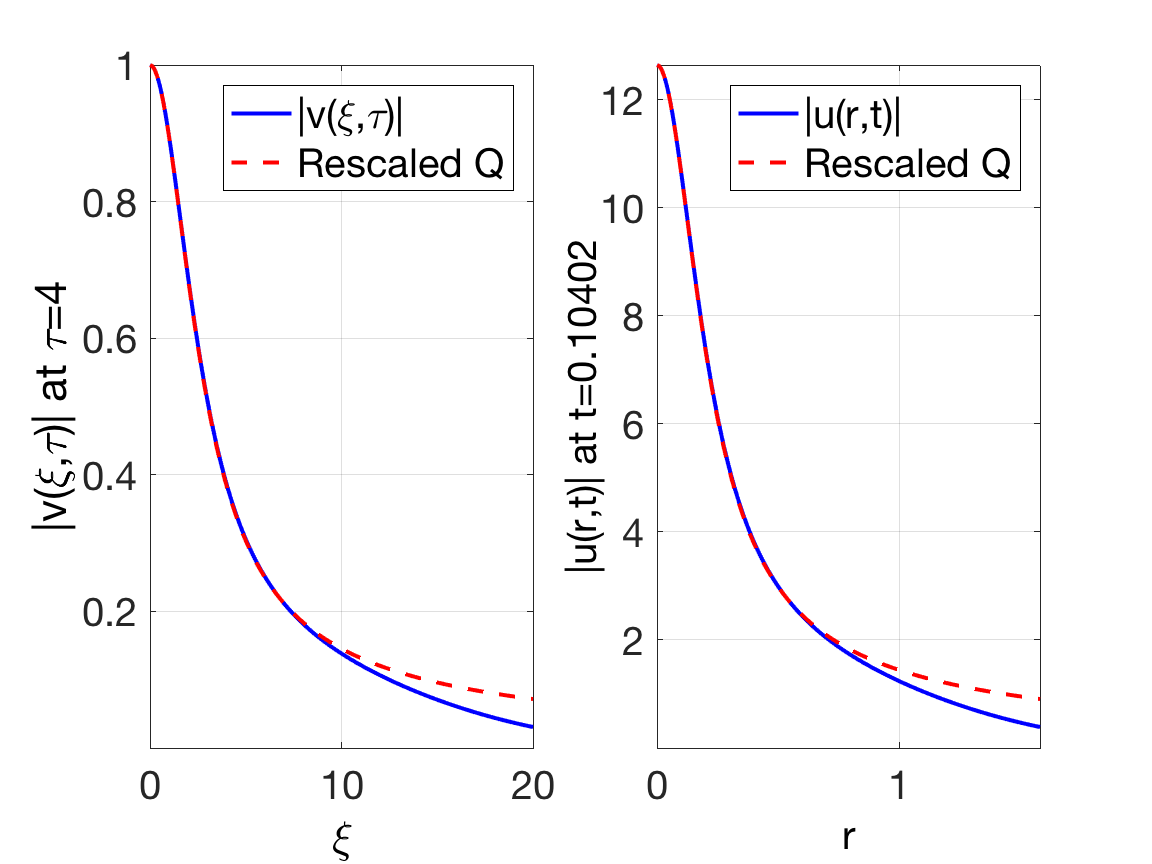}
\includegraphics[width=0.42\textwidth]{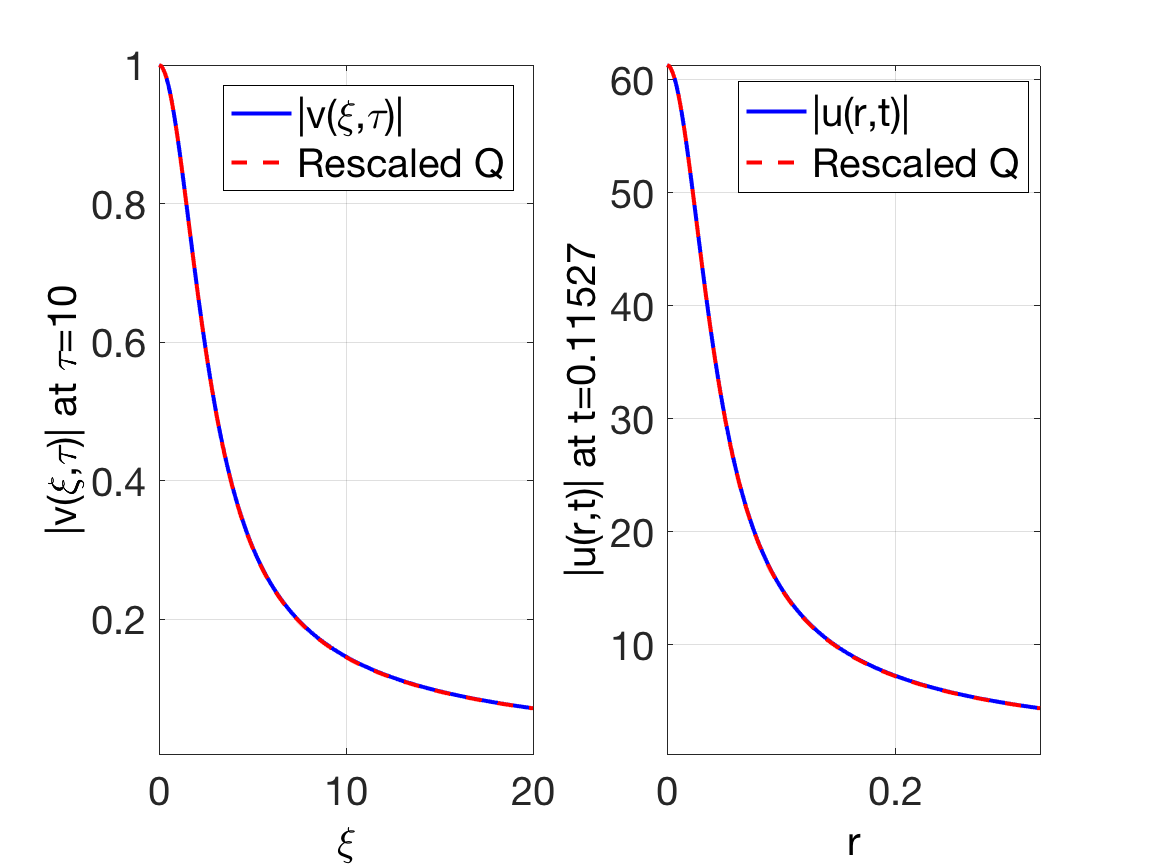}
\includegraphics[width=0.42\textwidth]{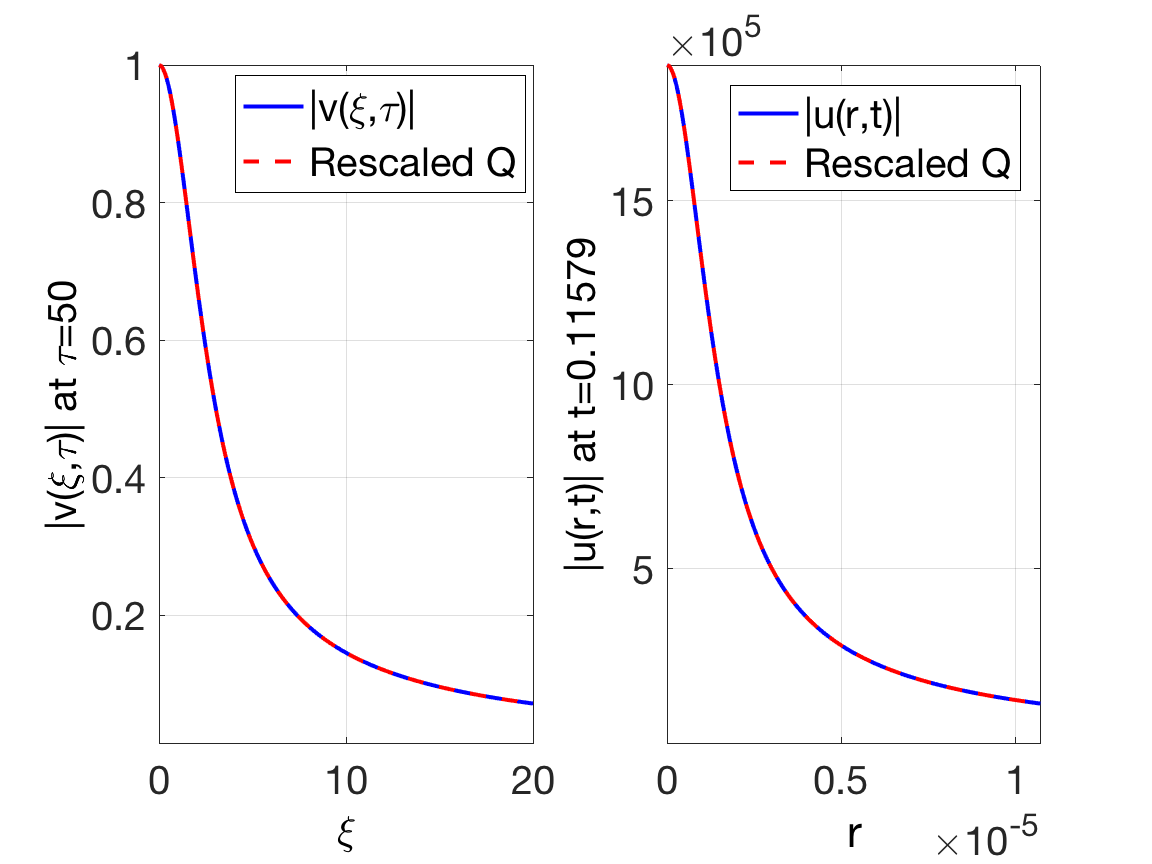}
\includegraphics[width=0.42\textwidth]{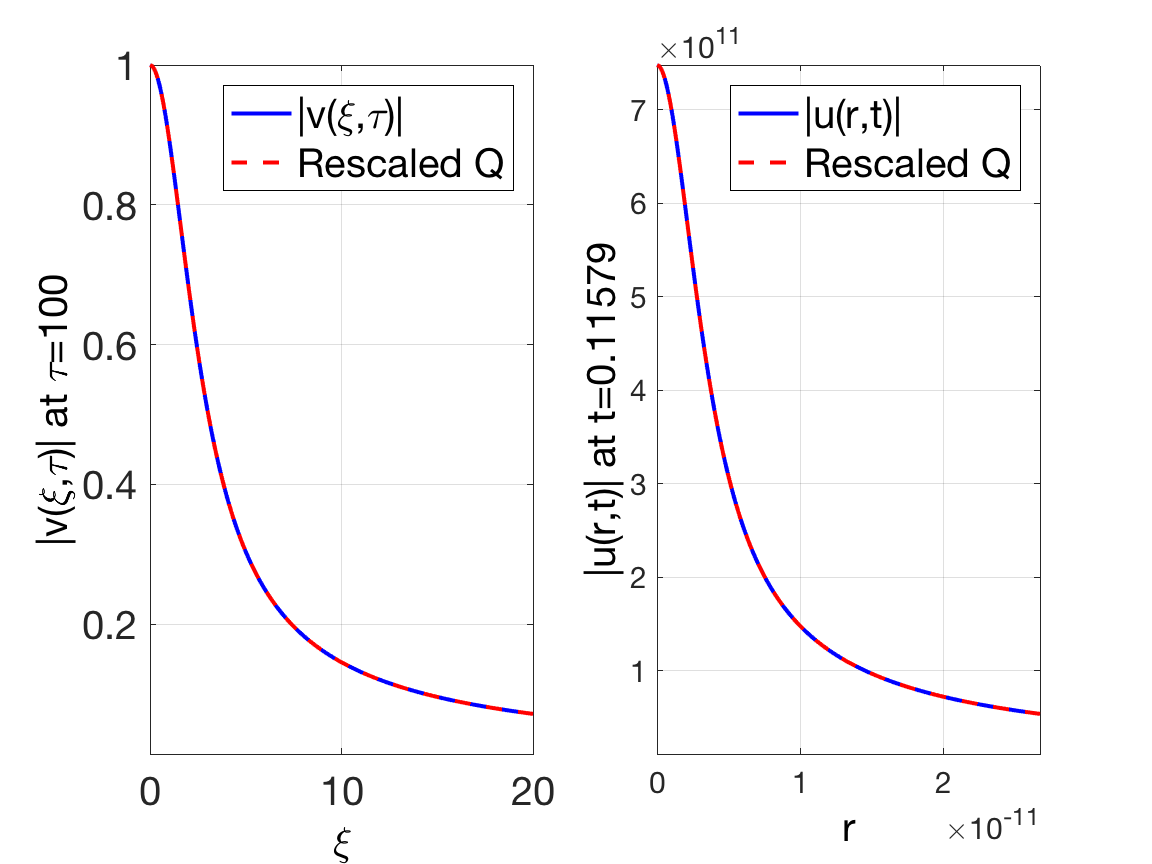}
\caption{ Blow-up profiles for the 3d cubic case at different times $\tau$ and $t$.}
\label{3d3p profiles}
%
\includegraphics[width=0.42\textwidth]{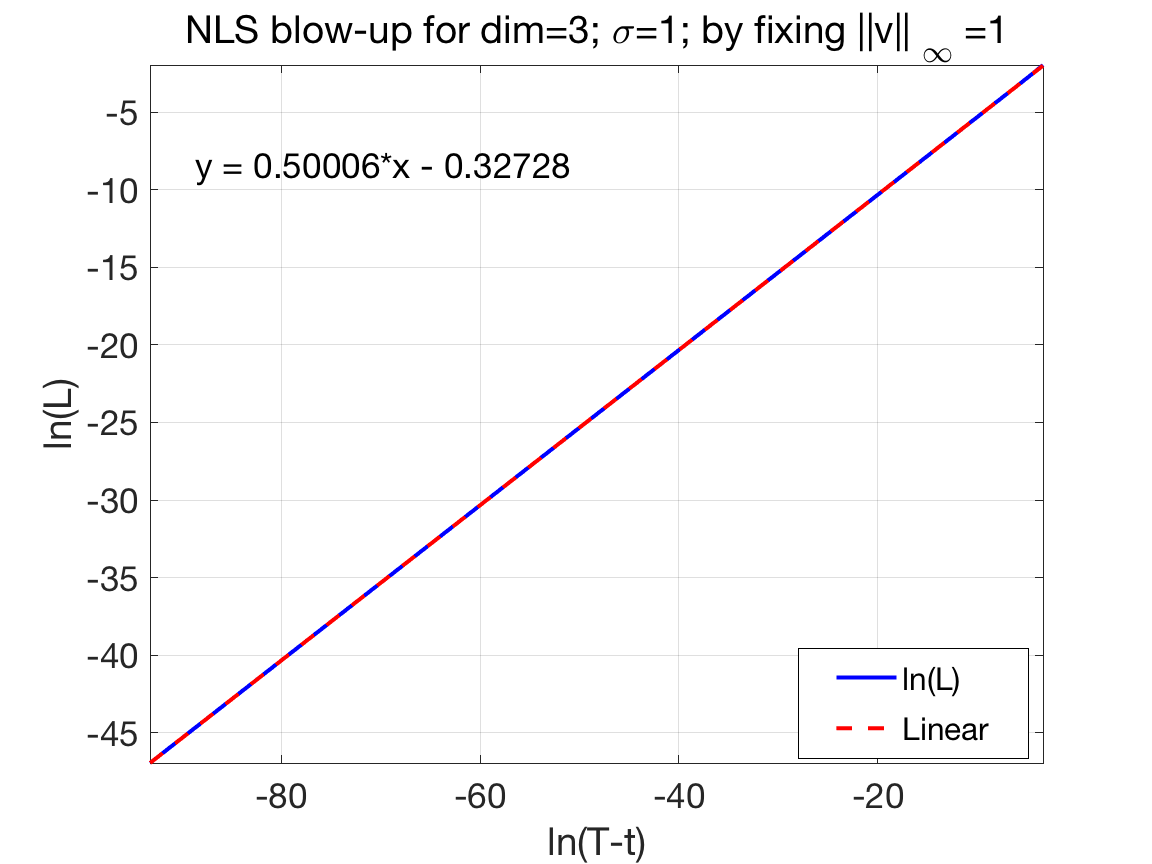}
\includegraphics[width=0.42\textwidth]{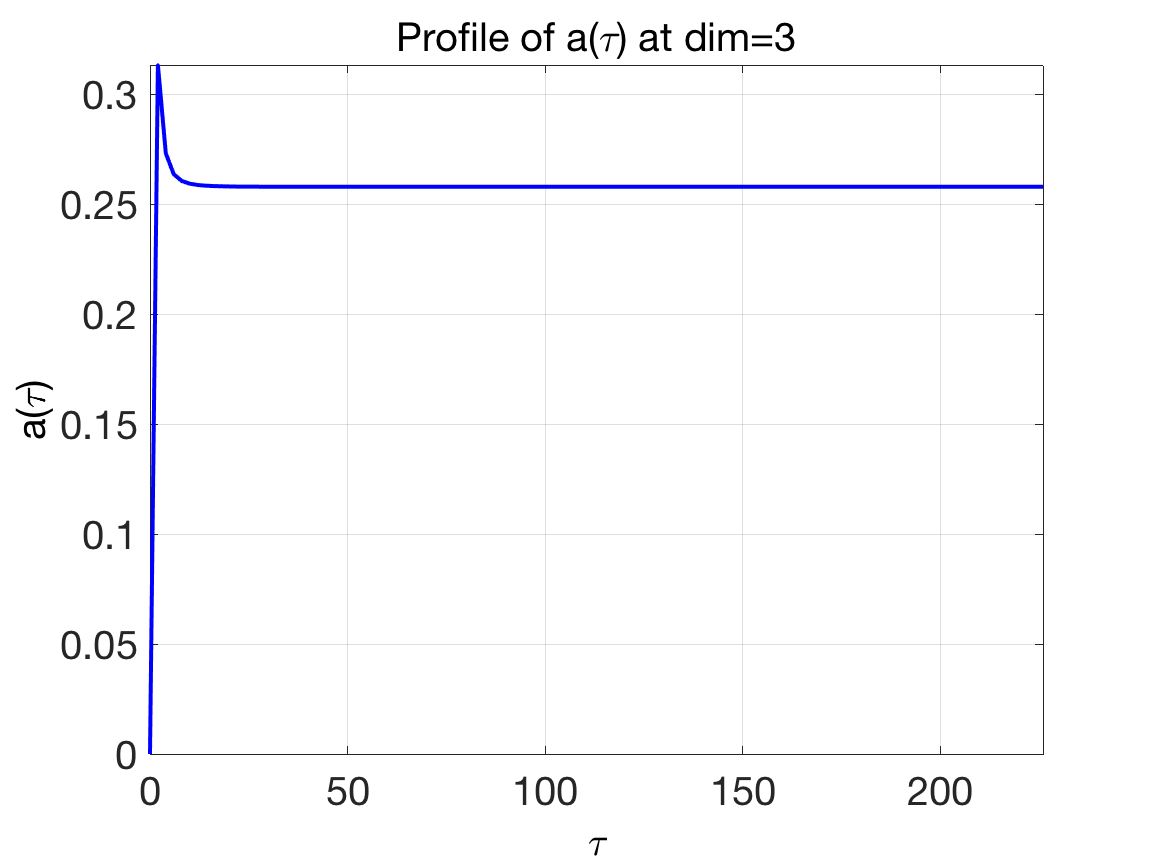}
\includegraphics[width=0.42\textwidth]{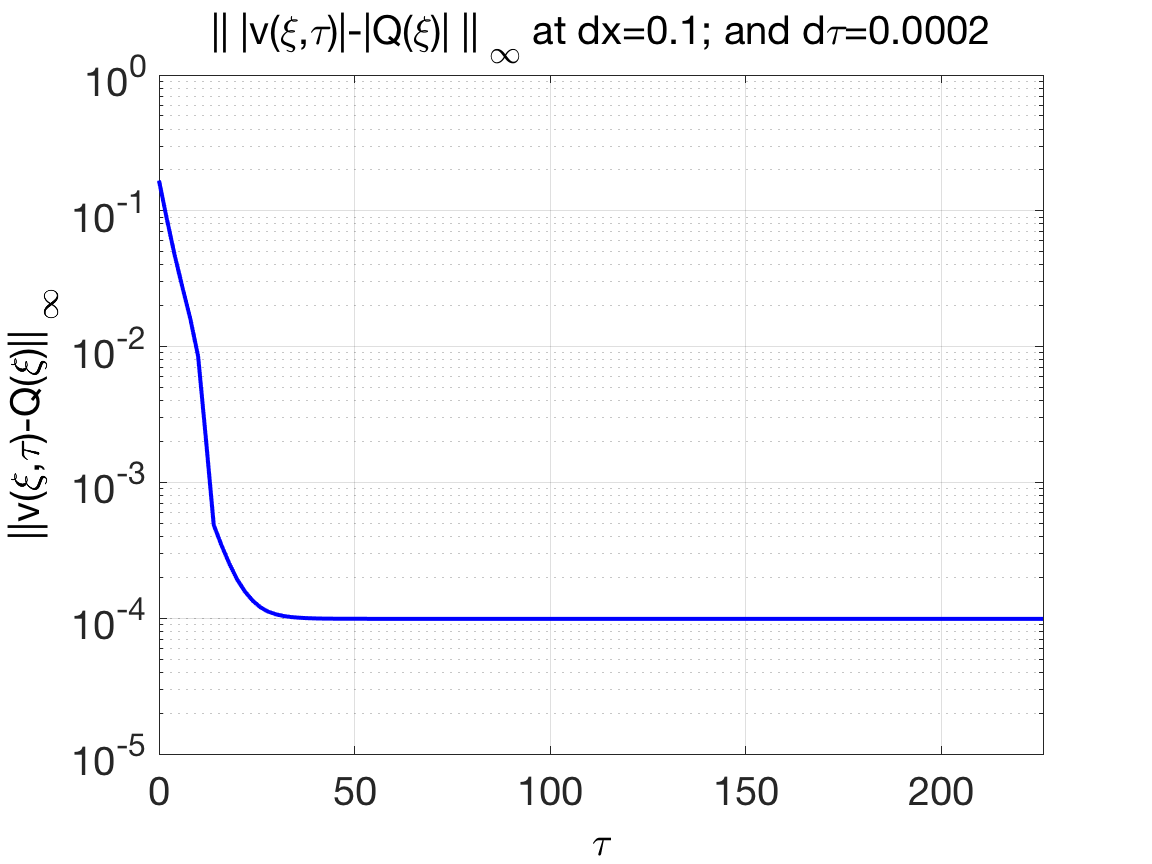}
\includegraphics[width=0.42\textwidth]{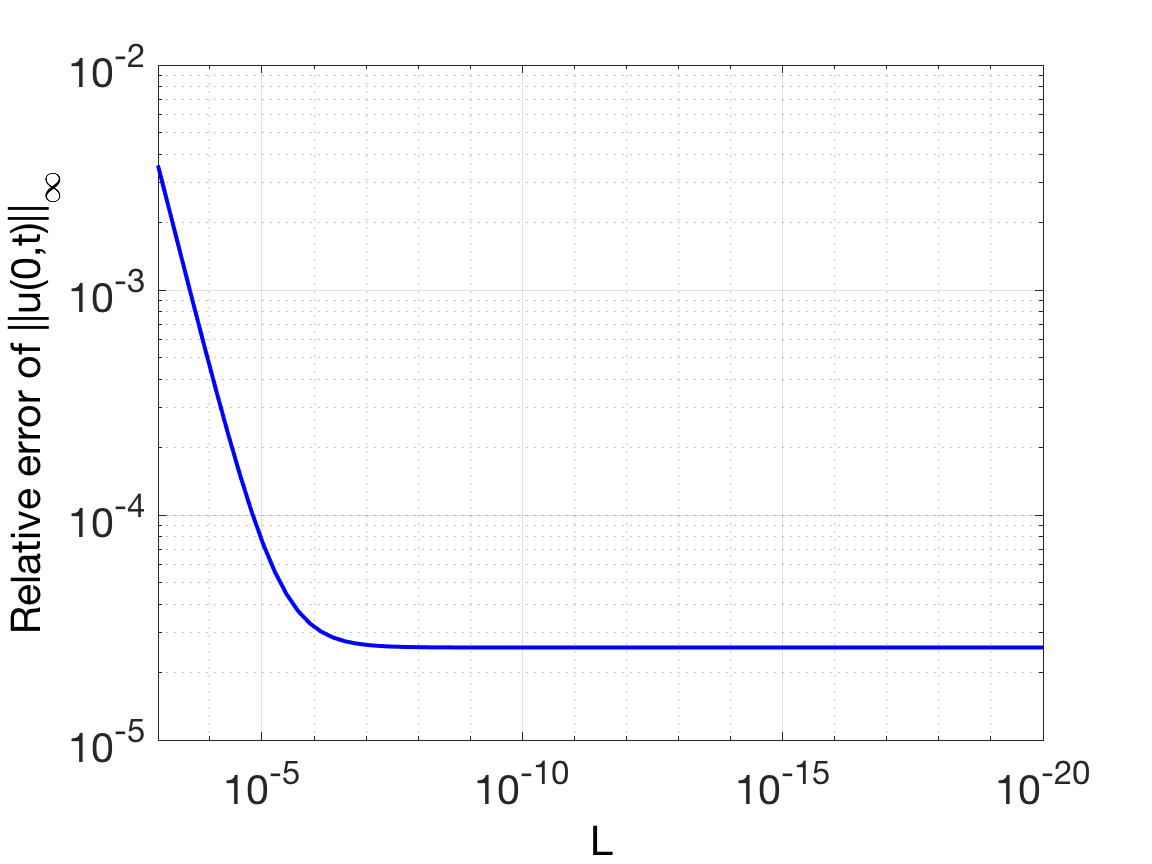}
\caption{ Blow-up data for the 3d cubic case: $\ln(T-t)$ vs. $\ln(L)$ (upper left), the quantity $a(\tau)$ (upper right), the distance between $Q$ and $v$ on time $\tau$ ($\| |v(\tau)| -|Q| \|_{L^{\infty}_{\xi}}$) (lower left), the relative error with respect to the predicted blow-up rate (lower right).  }
\label{3d3p data}
\end{center}
\end{figure}

\begin{figure}
\begin{center}
\includegraphics[width=0.42\textwidth]{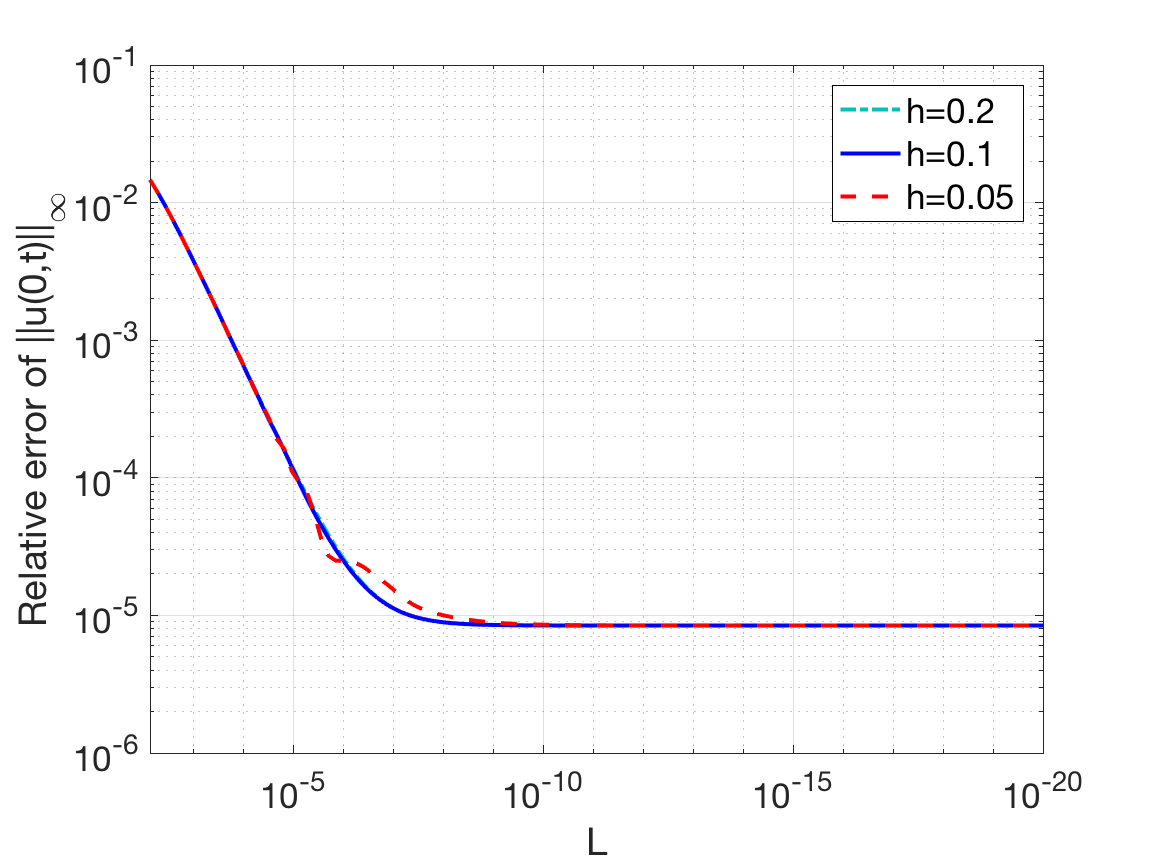}
\includegraphics[width=0.42\textwidth]{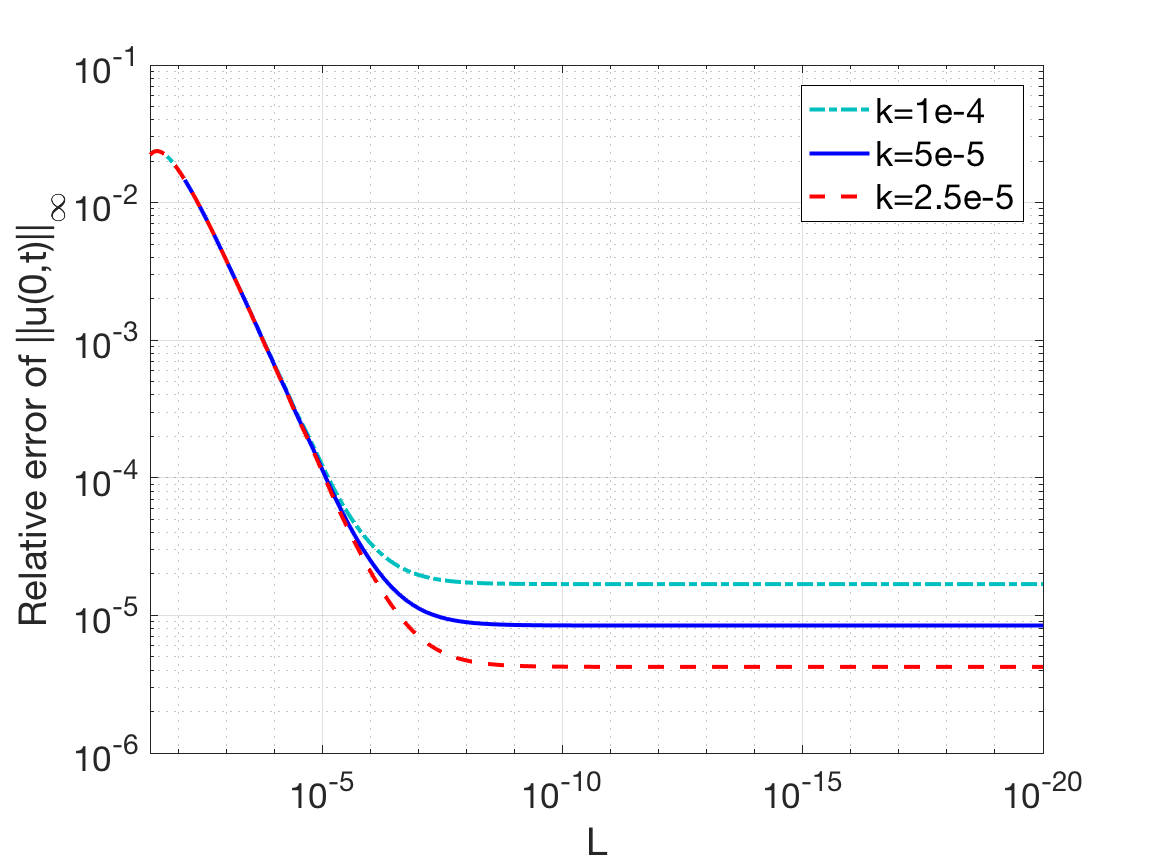}
\caption{ The relative error comparison when choosing different step sizes in space and time for the 3d quintic ($\sigma=2$) case.}
\label{NLS 3d5p error analysis}
\end{center}
\end{figure}

\subsection{Numerical results.} We now present our numerical results for different dimensions and nonlinearities.

We first present the {\it cubic} case in dimensions $d=3,4,5$. These cases include $s_c<1$, $s_c=1$ and $s_c>1$, respectively.
Figure \ref{3d3p profiles} shows the blow-up profiles at different times $\tau$ (or $t$). One can see that the solution converges to the predicted blow-up profile $Q$, up to scaling, as $\tau \rightarrow \infty$ (or $t \rightarrow T$). The two profiles become nearly indistinguishable after $\tau = 10$. The top left subplot in Figure \ref{3d3p data} shows that the slope of $L$ on log scale is still $\frac{1}{2}$. The top right subplot in Figure \ref{3d3p data} shows that the parameter $a(\tau)$ converges to a constant right away (before $\tau=10$). This indicates that before $\tau=10$, the solution enters the self-similar blow-up regime and explains why the profile $v(\xi,\tau)$ and the rescaled $Q$ become indistinguishable right after $\tau=10$. Note that it is not the case for the mass-critical case, where the ``log-log" blow-up regime can only be reached at extremely high focusing levels (see e.g. \cite{SS1999}, \cite{F2015}, \cite{RYZ2017}).
The bottom left subplot in Figure \ref{3d3p data} shows the distance between $|v(\xi,\tau)|$ and the rescaled $|Q(\xi)|$. The quantity stabilizes before $\tau=10$, which agrees with the time when the quantity $a(\tau)$ stabilizes in the top right subplot. The bottom right subplot in Figure \ref{3d3p data} shows the relative error $\mathcal{E}_{rel}$. This quantity $\mathcal{E}_{rel}$ stabilizes at the focusing level $L \sim 10^{-6}$ with a satisfactory order (around $10^{-5}$). This shows that there is no such ``adiabatic" regime occurring before the self-similar regime as the case for the $L^2$-critical NLS (see e.g., \cite{FP1998}, \cite{FP2000}, \cite{RYZ2017}).

Figures \ref{4d3p profiles} and \ref{4d3p data} show the results for the 4d cubic case. Both the blow-up profiles and quantities we track are similar to the 3d cubic case. Compared with the 3d cubic case in Figures 
\ref{3d3p profiles} and \ref{3d3p data}, the blow-up profiles become indistinguishable around 
$\tau=50$ (see Figure \ref{4d3p profiles}), and the relative error reaches the stabilized regime around the focusing level $L \approx 10^{-7}$ (see bottom right subplot in Figure \ref{4d3p data}). For the 5d cubic case, besides the similar results to the previous cases (Figures \ref{5d3p profiles} and \ref{5d3p data}), from the bottom right subplot in Figure \ref{5d3p data}, we see that the relative error reaches a stabilized regime around the focusing level $L \approx 10^{-8}$. 

We next present the results for the {\it quintic} case for dimensions $d=2,3,4$, see Figures \ref{2d5p profiles} to \ref{4d5p data}. We obtain the similar results for these cases as for the cubic cases above. However, by comparing the bottom right subplots in Figures \ref{2d5p data}, \ref{3d5p data} and \ref{4d5p data}, we can see that the relative error reaches the stabilized regime at the focusing level $10^{-6}$, $10^{-7}$ and $10^{-8}$, respectively. This shows that for a fixed nonlinearity, the higher dimension leads to the higher focusing level for the solution to reach the self-similar regime. This is also true for the cubic cases (see Figures \ref{3d3p data}, \ref{4d3p data} and \ref{5d3p data}).

Finally, we list the 3d {\it septic} ($s_c>1$) case in Figures \ref{3d7p profiles} and \ref{3d7p data}. While the profiles and results are still similar to the previous ones, we compare the results for different nonlinearities. Again, the solution reaches the self-similar regime around the focusing level $L=10^{-6}$ for the 3d cubic case, around $L=10^{-7}$ for the 3d quintic case and around $L=10^{-10}$ for the 3d septic case (see the bottom right subplot in Figure \ref{3d7p data}). This shows that for a fixed dimension, the higher nonlinearity leads to the higher focusing level for the solution to reach the self-similar regime.

\section{Conclusion}

We generalized the existence and local uniqueness theory of $Q$ for dimensions $d \geq 2$ and nonlinearities other than cubic, in the mass-supercritical NLS equations, $s_c>0$. 
From our numerical simulations, we found that the energy-subcritical, critical and energy-supercritical cases enjoy very similar stable blow-up dynamics.
We obtained the self-similar profiles to which stable blow-up solutions converge. Our numerical results show that $E[Q]=$ constant if $s_c=1$ and $E[Q]=-\infty$ if $s_c>1$. All these facts show that it maybe challenging to analyze theoretically the stable blow-up dynamics in the mass-supercritical setting. On the other hand, unlike the mass-critical case, certain features of the stable blow-up are easier to exhibit, for example, blow-up solutions converge to the predicted blow-up rate very fast, which can be simply observed numerically, and thus, we do not need to use extended and involved asymptotic analysis on solutions to get information about the rates such as an ``adiabatic" regime or the ``log-log" regime, which appear in the mass-critical case.

\begin{figure}
\begin{center}
\includegraphics[width=0.42\textwidth]{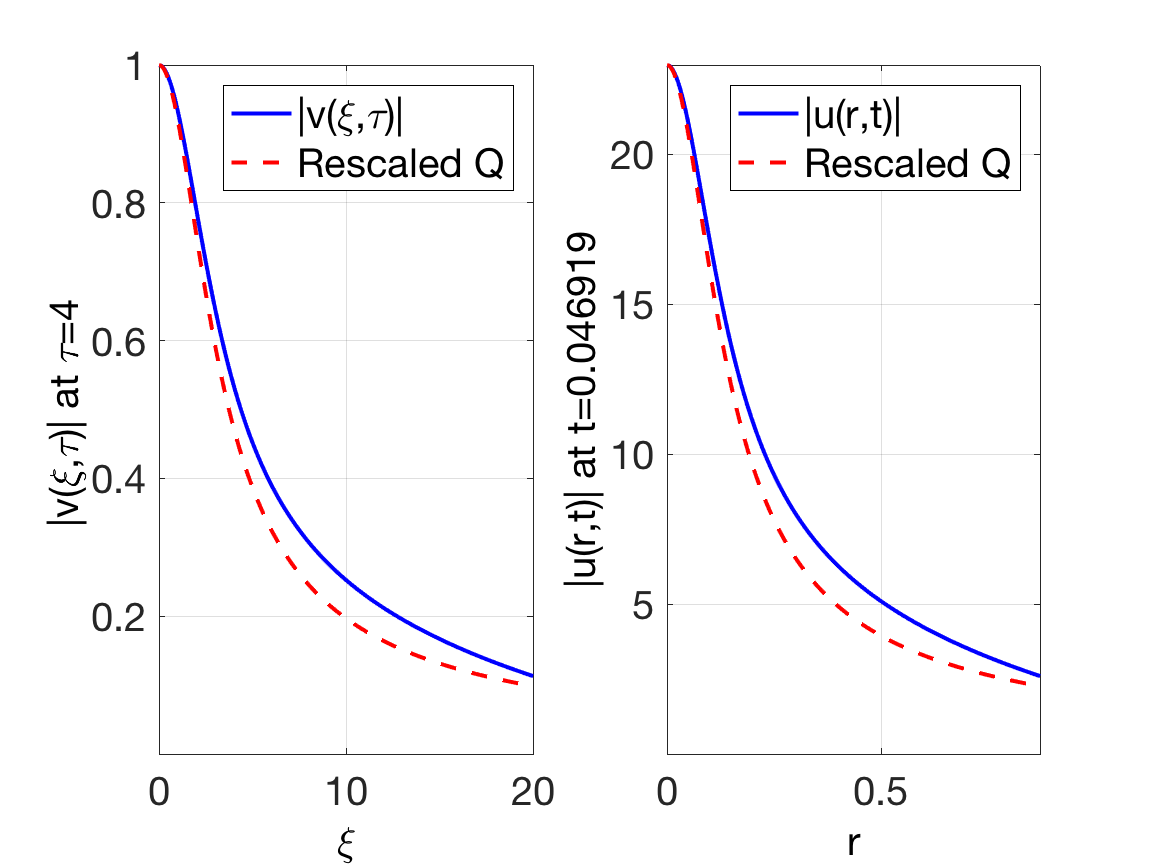}
\includegraphics[width=0.42\textwidth]{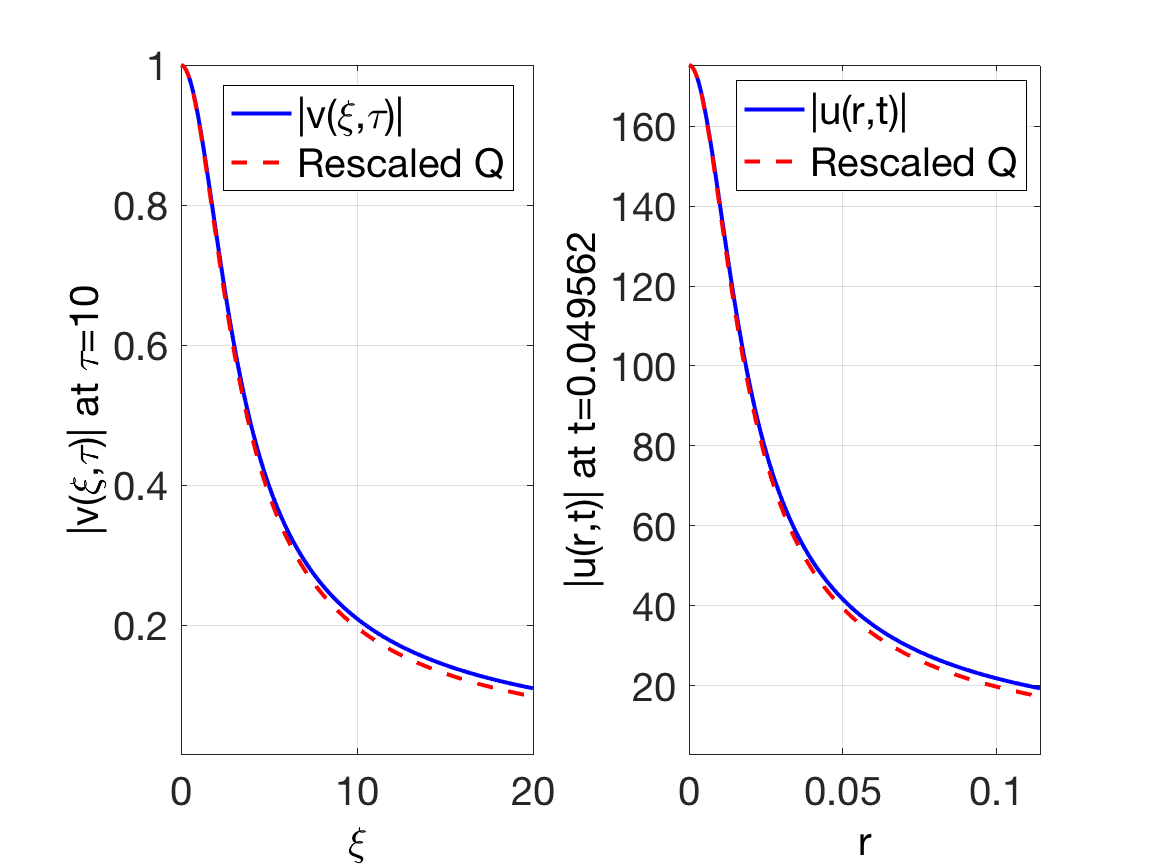}
\includegraphics[width=0.42\textwidth]{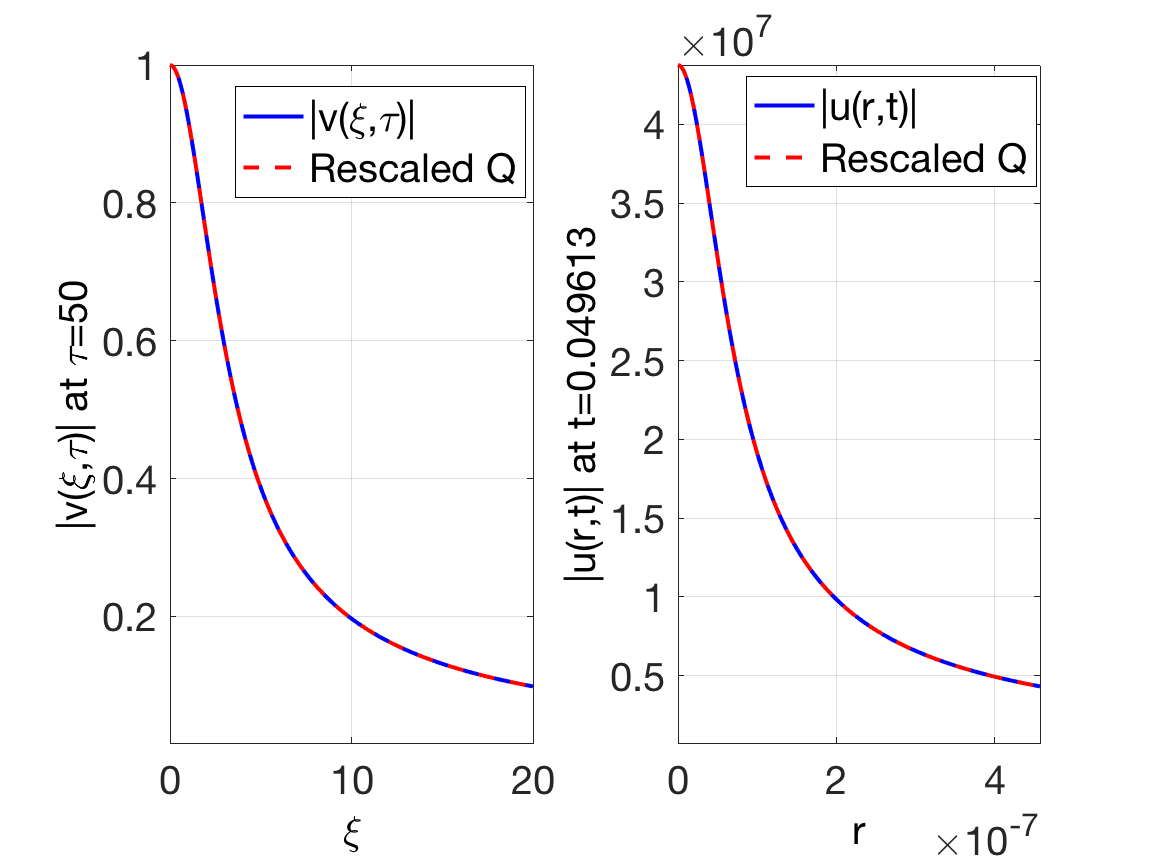}
\includegraphics[width=0.42\textwidth]{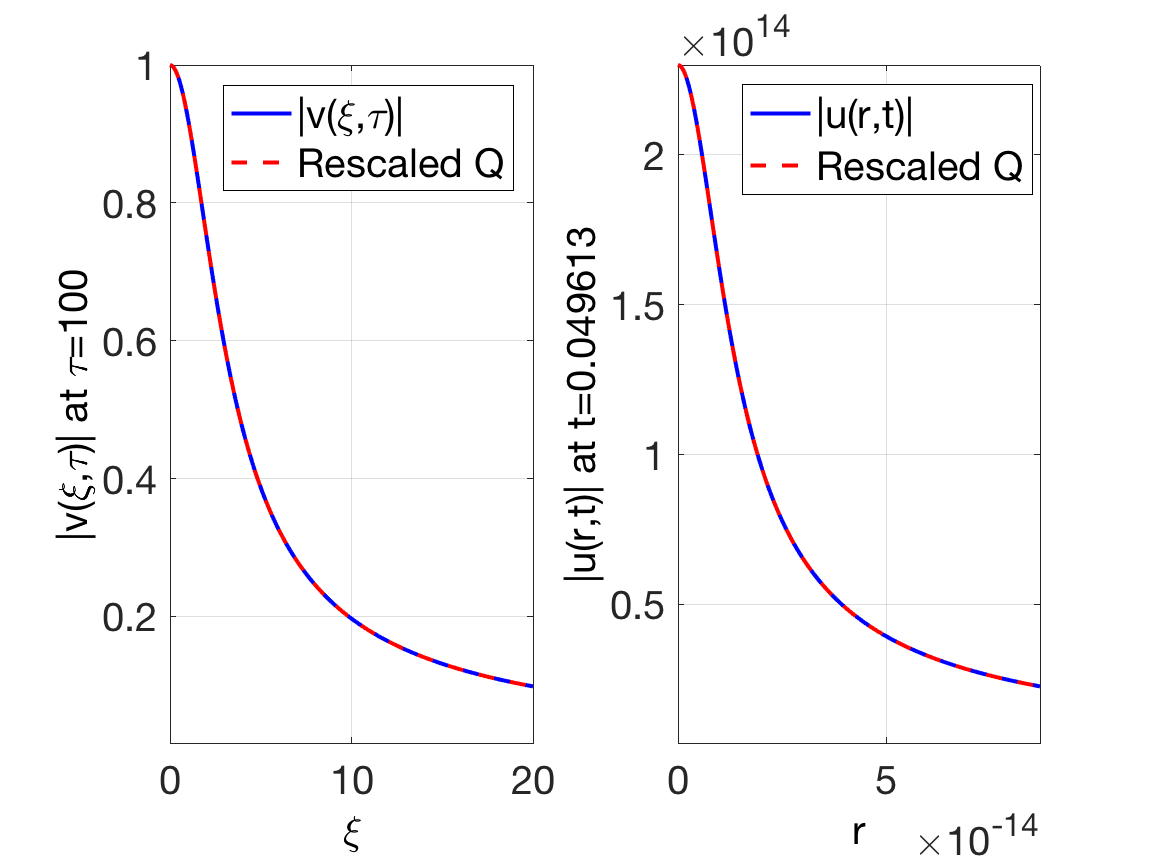}
\caption{ Blow-up profiles for the 4d cubic case at different time $\tau$ and $t$.}
\label{4d3p profiles}
%
\includegraphics[width=0.42\textwidth]{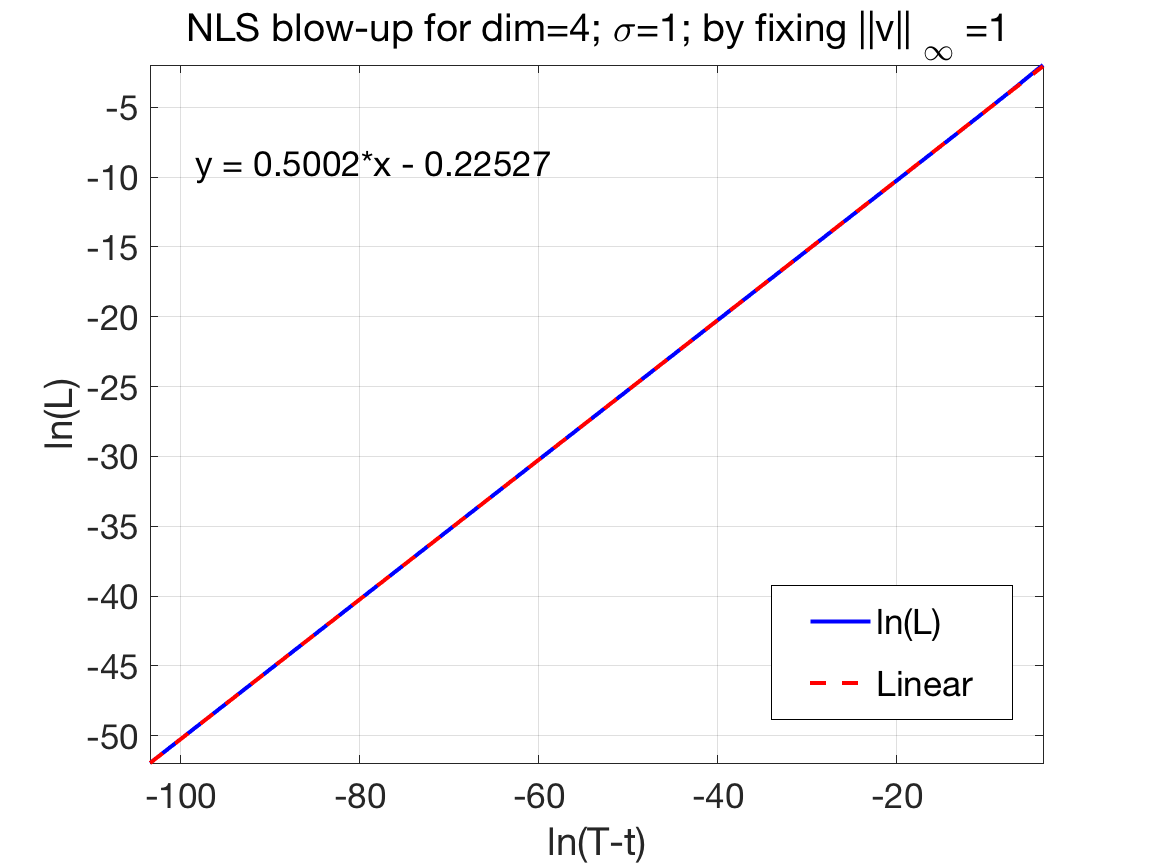}
\includegraphics[width=0.42\textwidth]{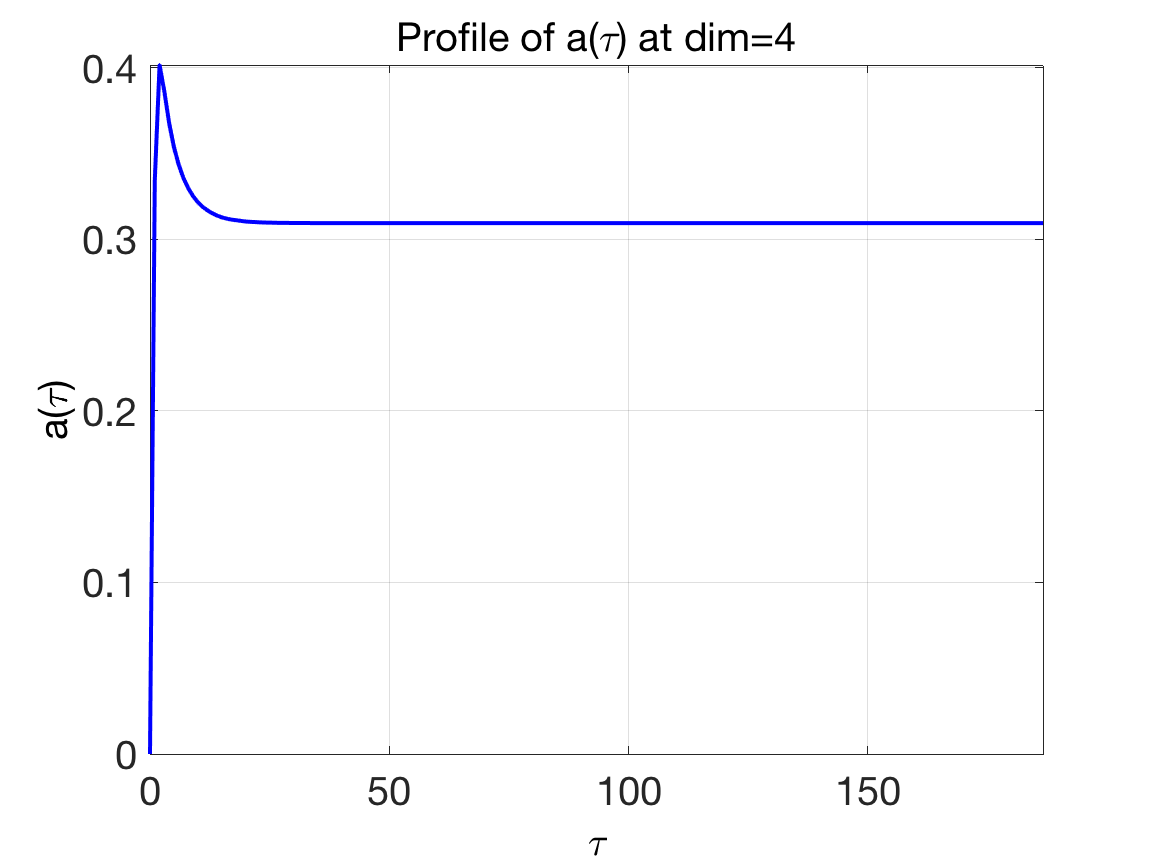}
\includegraphics[width=0.42\textwidth]{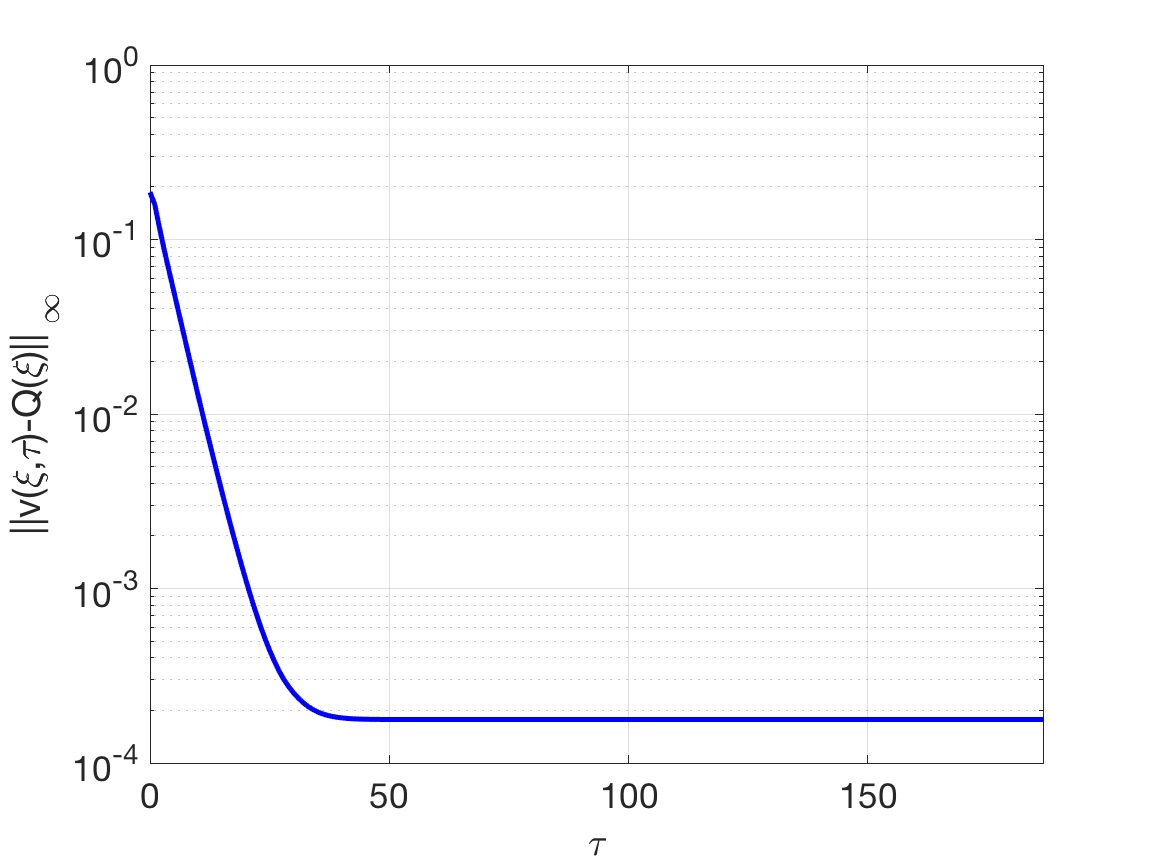}
\includegraphics[width=0.42\textwidth]{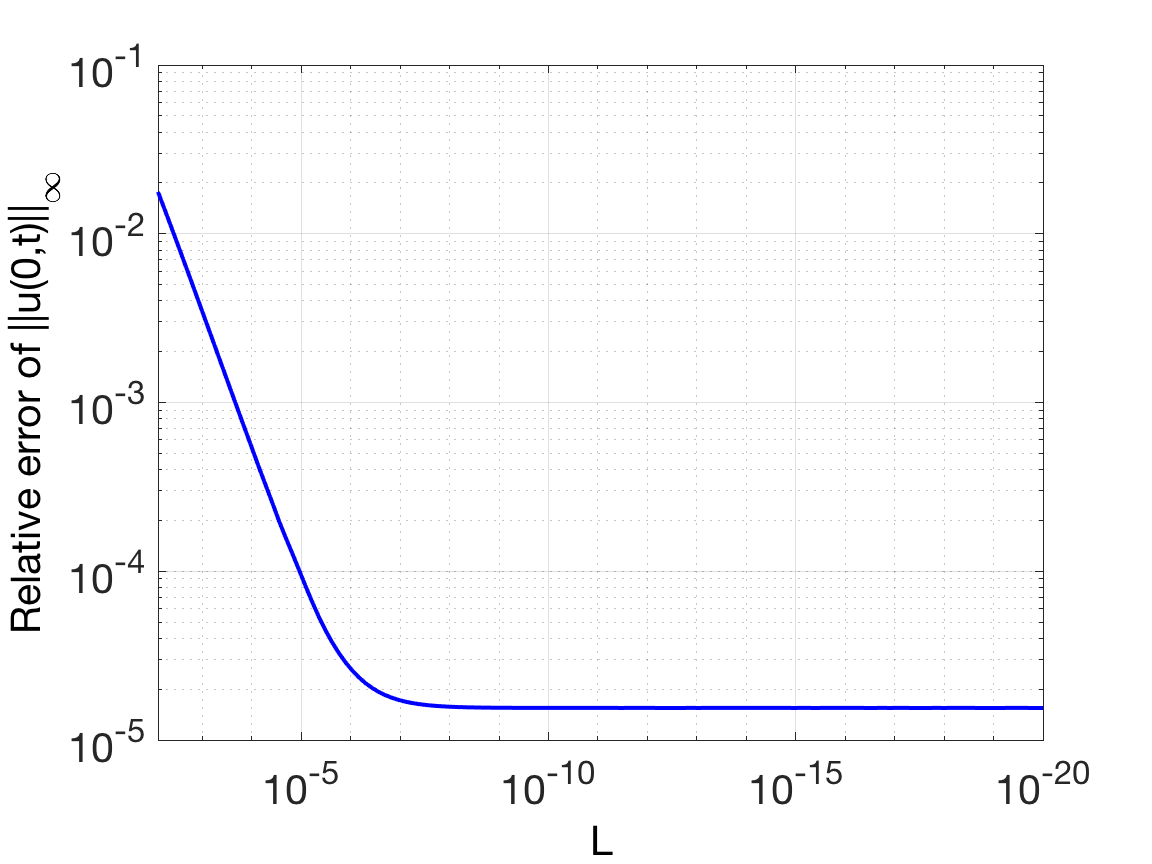}
\caption{ Blow-up data for the 4d cubic case: $\ln(T-t)$ vs. $\ln(L)$ (upper left), the quantity $a(\tau)$ (upper right), the distance between $Q$ and $v$ on time $\tau$ ($\| |v(\tau)| -|Q| \|_{L^{\infty}_{\xi}}$) (lower left), the relative error with respect to the predicted blow-up rate (lower right).}
\label{4d3p data}
\end{center}
\end{figure}

\begin{figure}
\begin{center}
\includegraphics[width=0.42\textwidth]{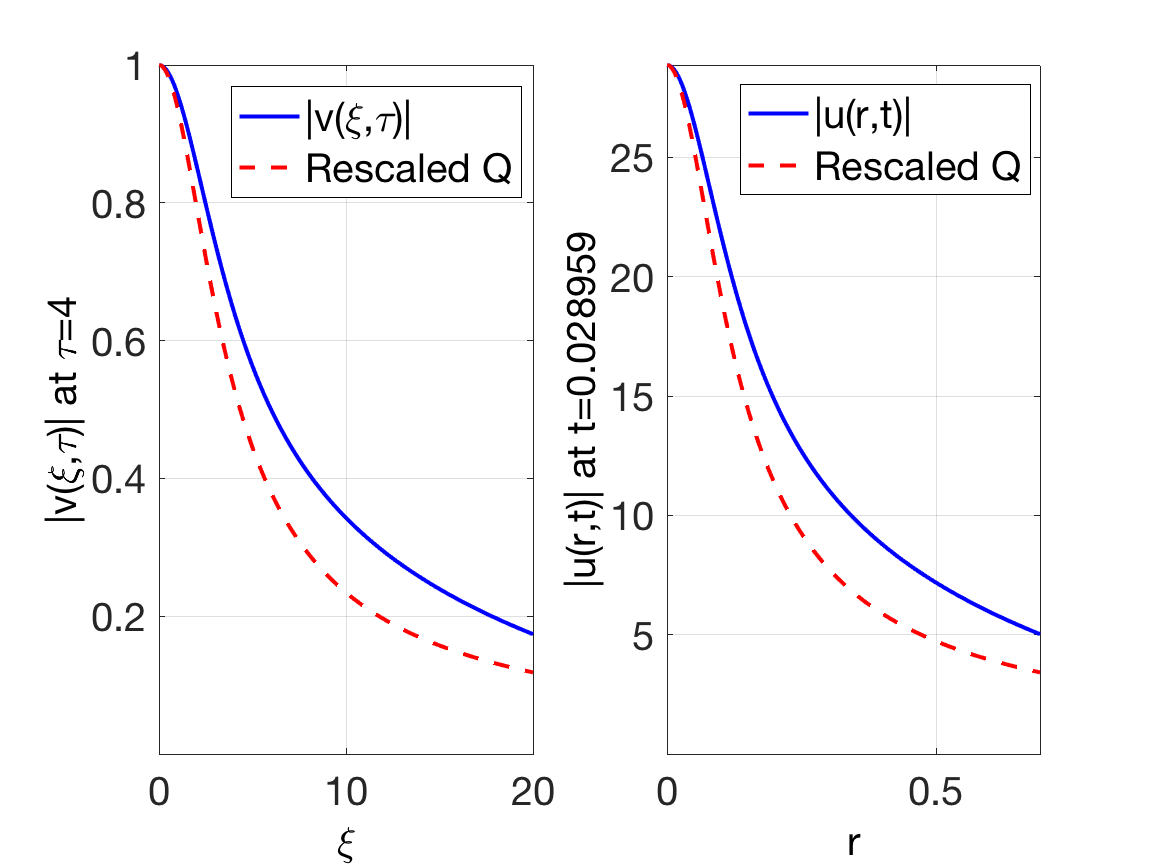}
\includegraphics[width=0.42\textwidth]{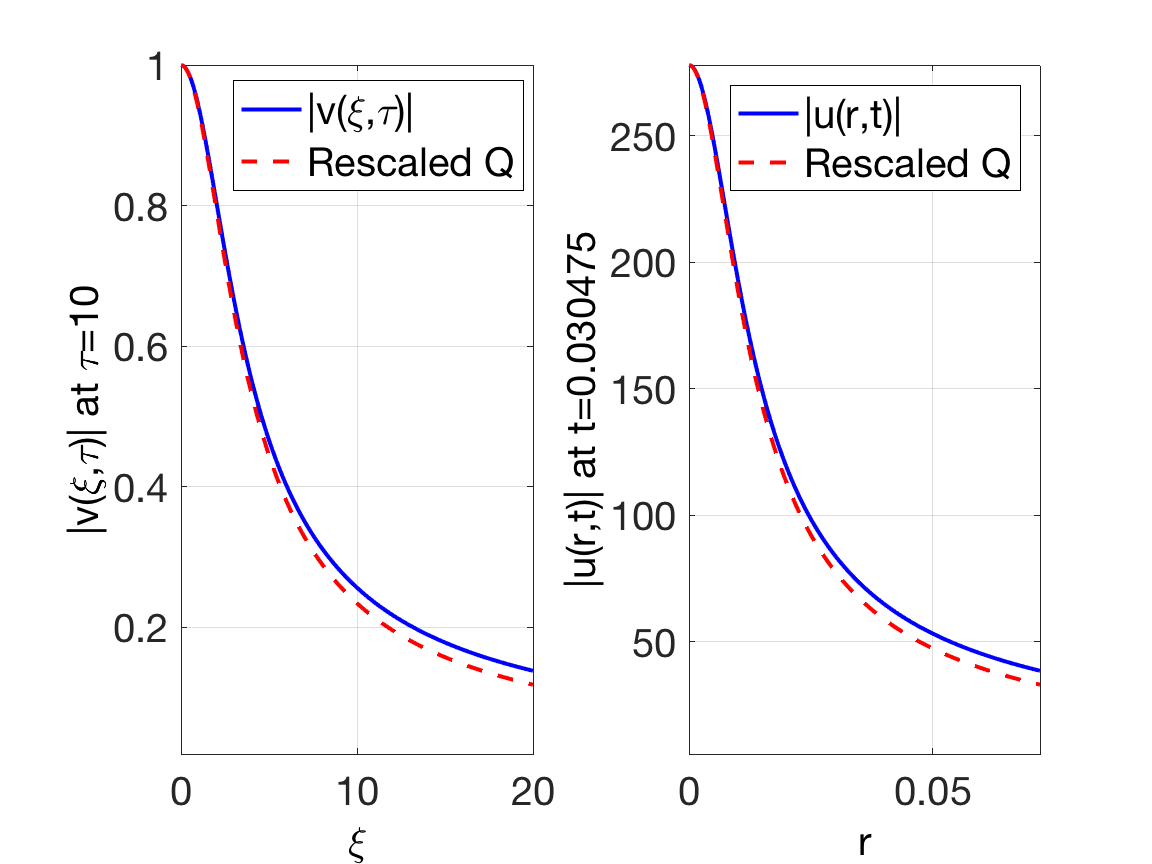}
\includegraphics[width=0.42\textwidth]{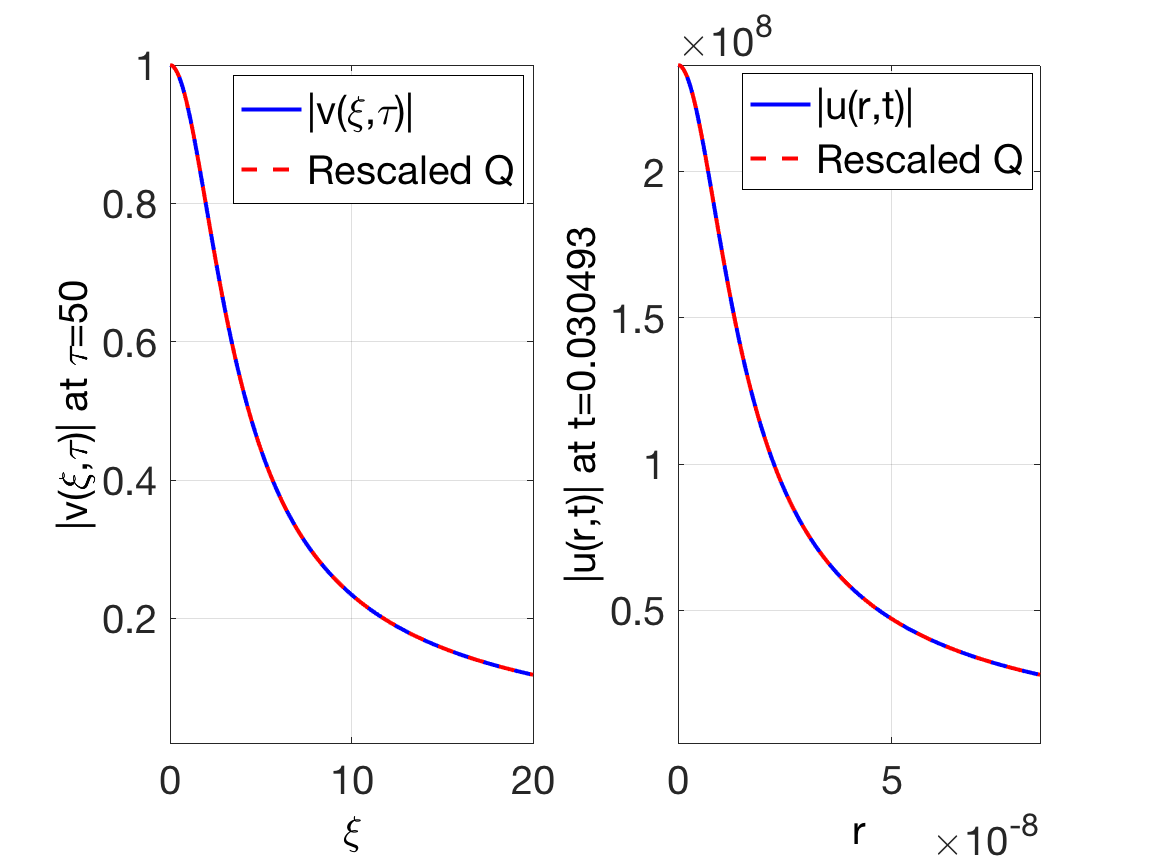}
\includegraphics[width=0.42\textwidth]{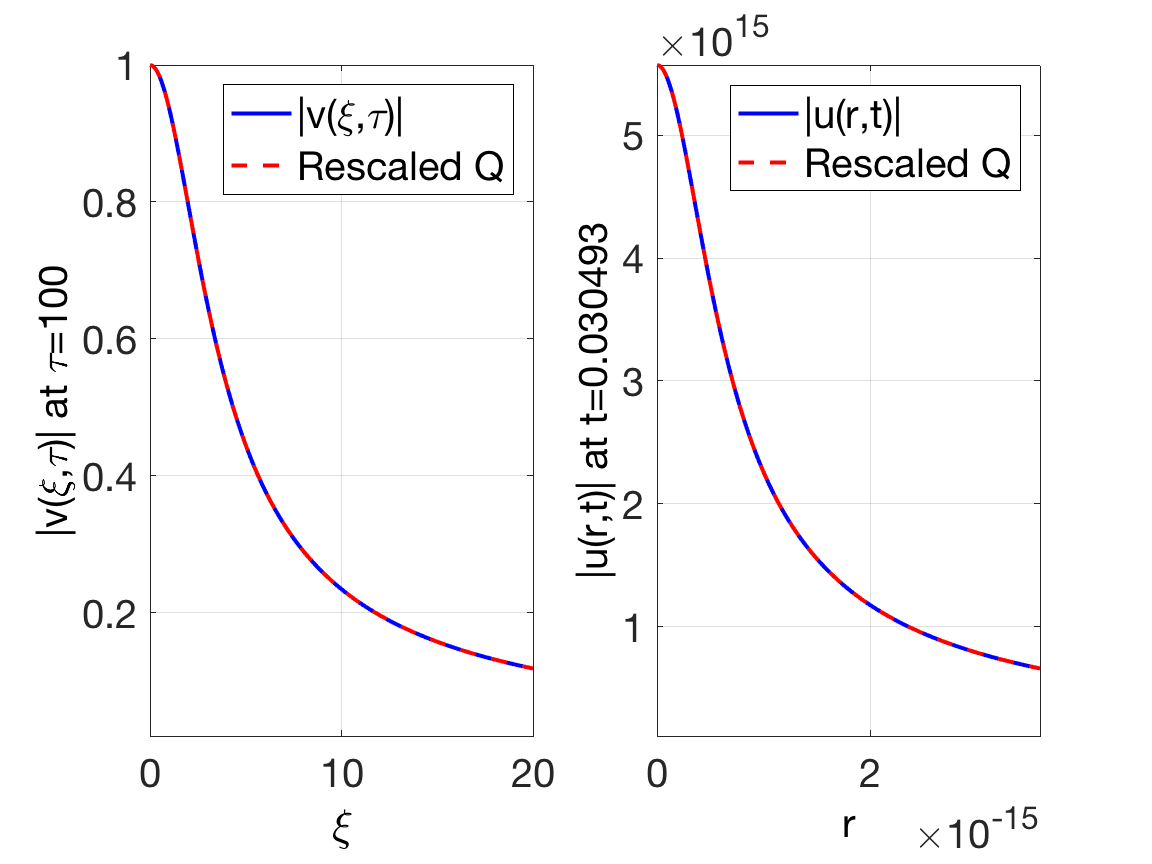}
\caption{ Blow-up profiles for the 5d cubic case at different time $\tau$ and $t$.}
\label{5d3p profiles}
%
\includegraphics[width=0.42\textwidth]{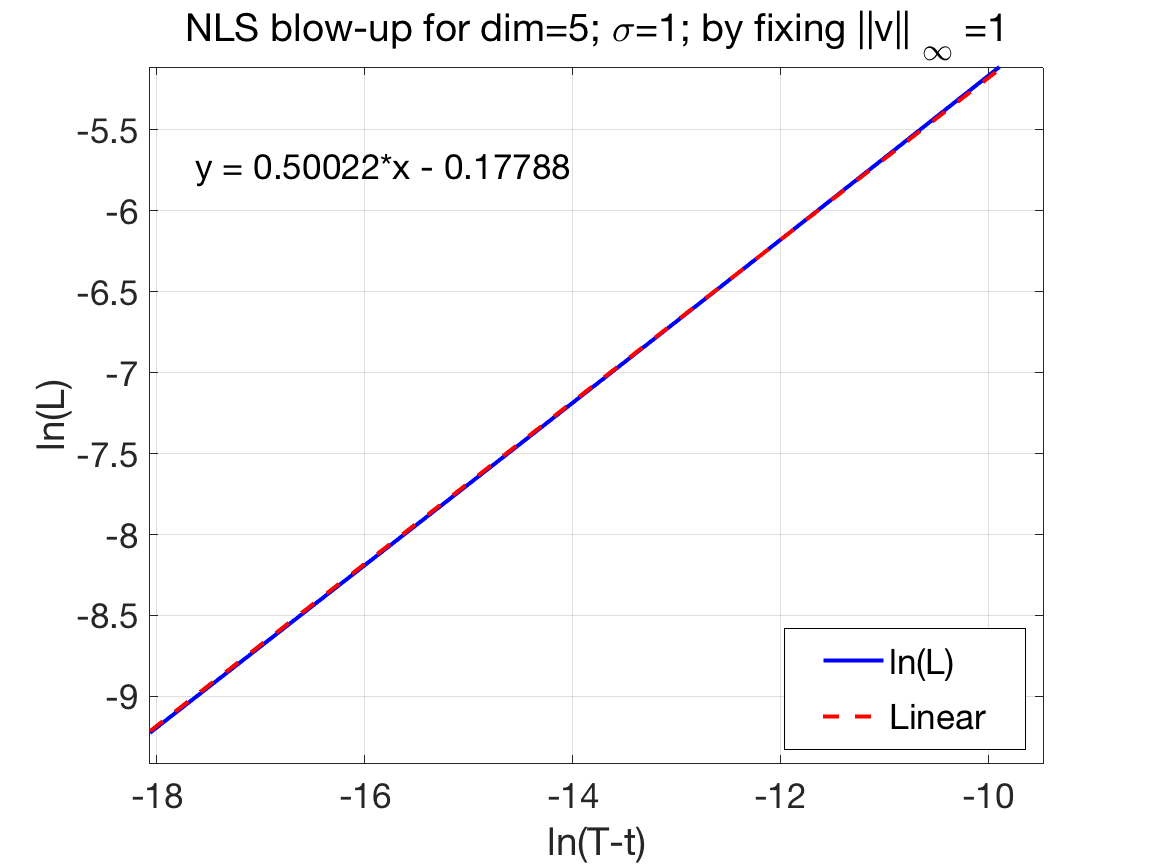}
\includegraphics[width=0.42\textwidth]{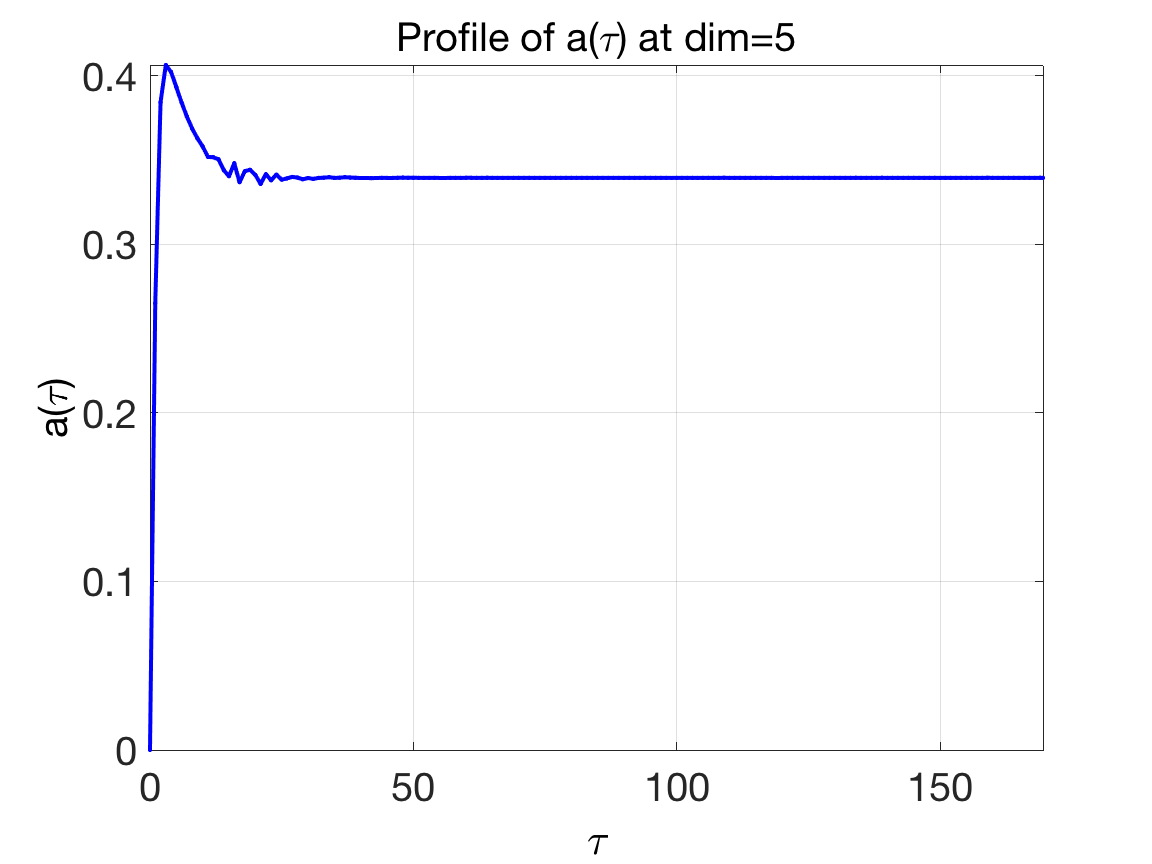}
\includegraphics[width=0.42\textwidth]{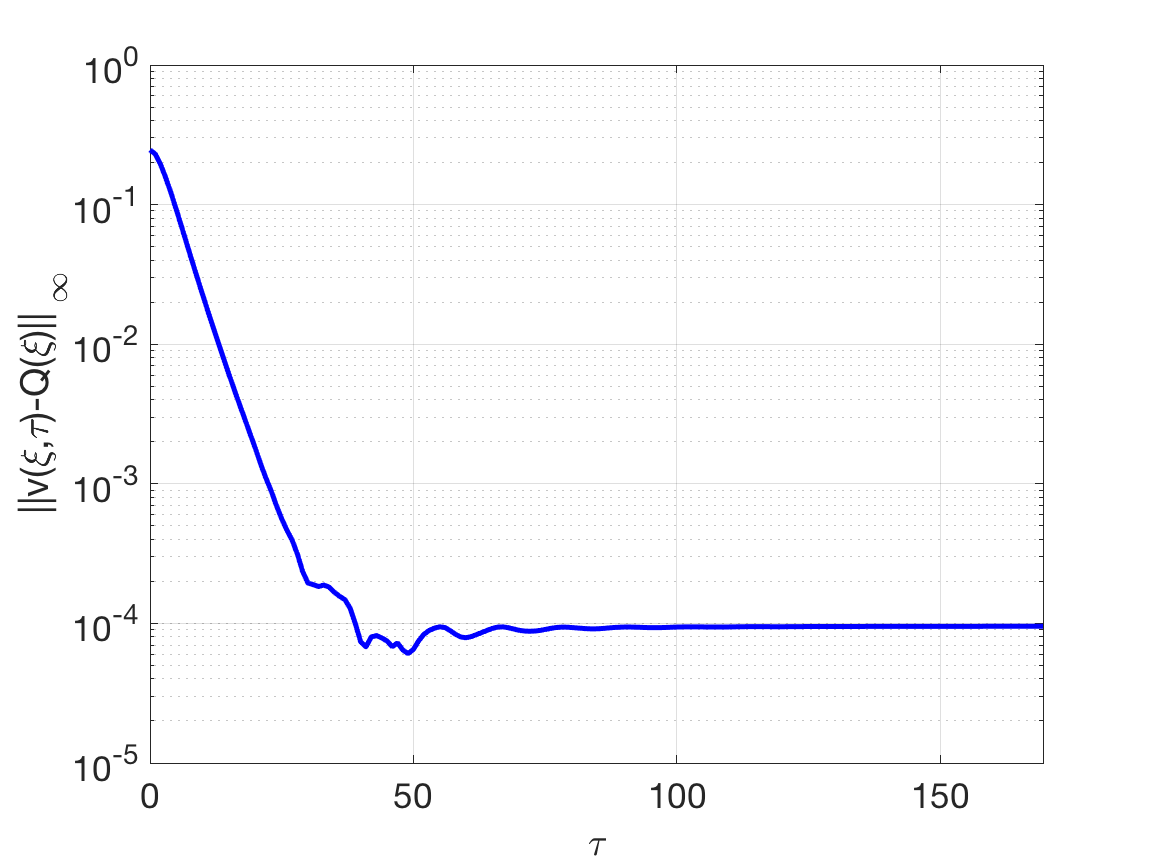}
\includegraphics[width=0.42\textwidth]{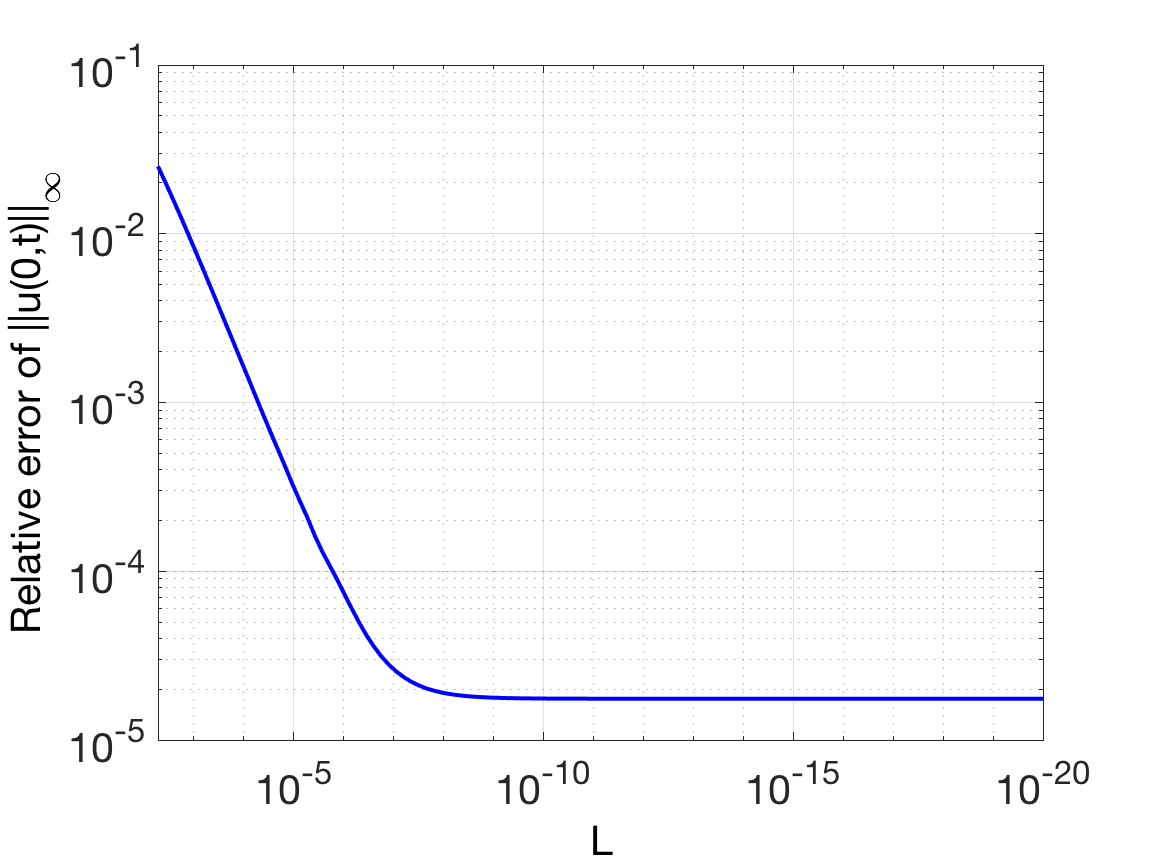}
\caption{ Blow-up data for the 5d cubic case: $\ln(T-t)$ vs. $\ln(L)$ (upper left), the quantity $a(\tau)$ (upper right), the distance between $Q$ and $v$ on time $\tau$ ($\| |v(\tau)| -|Q| \|_{L^{\infty}_{\xi}}$) (lower left), the relative error with respect to the predicted blow-up rate (lower right).  }
\label{5d3p data}
\end{center}
\end{figure}

\begin{figure}
\begin{center}
\includegraphics[width=0.42\textwidth]{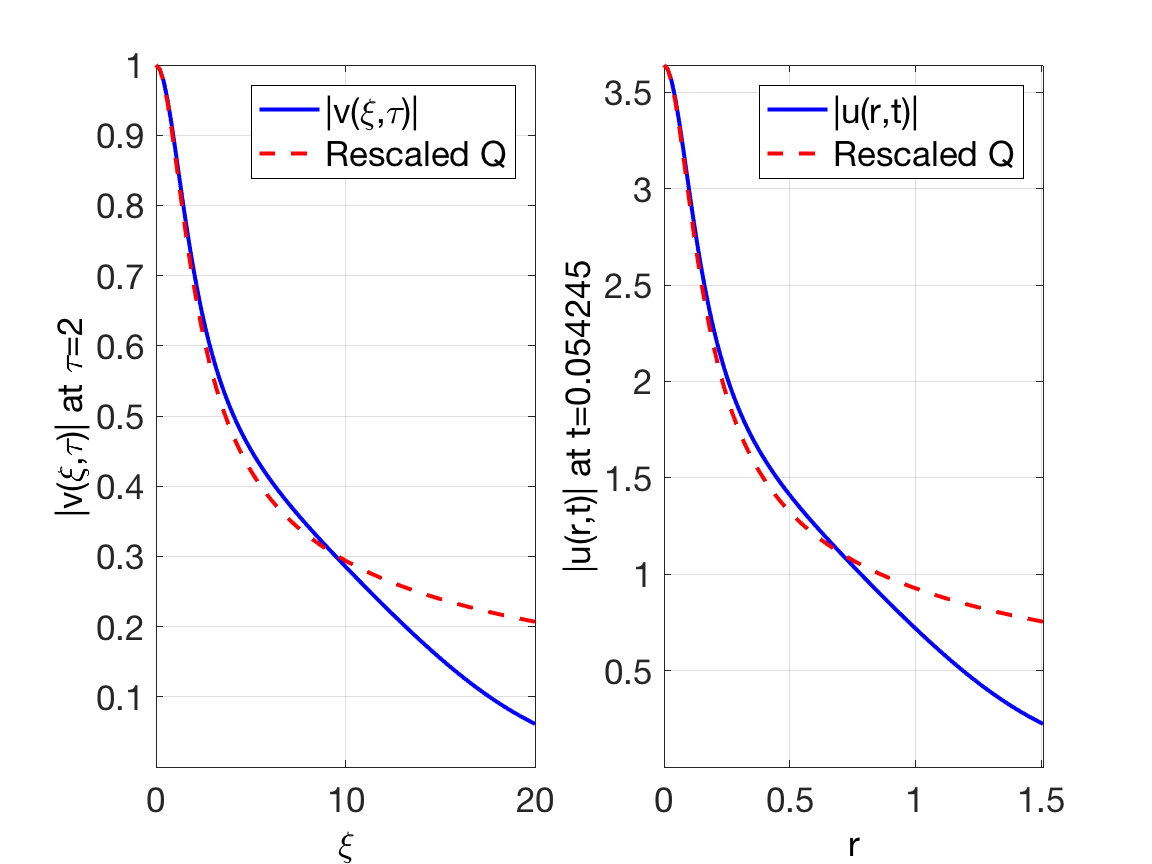}
\includegraphics[width=0.42\textwidth]{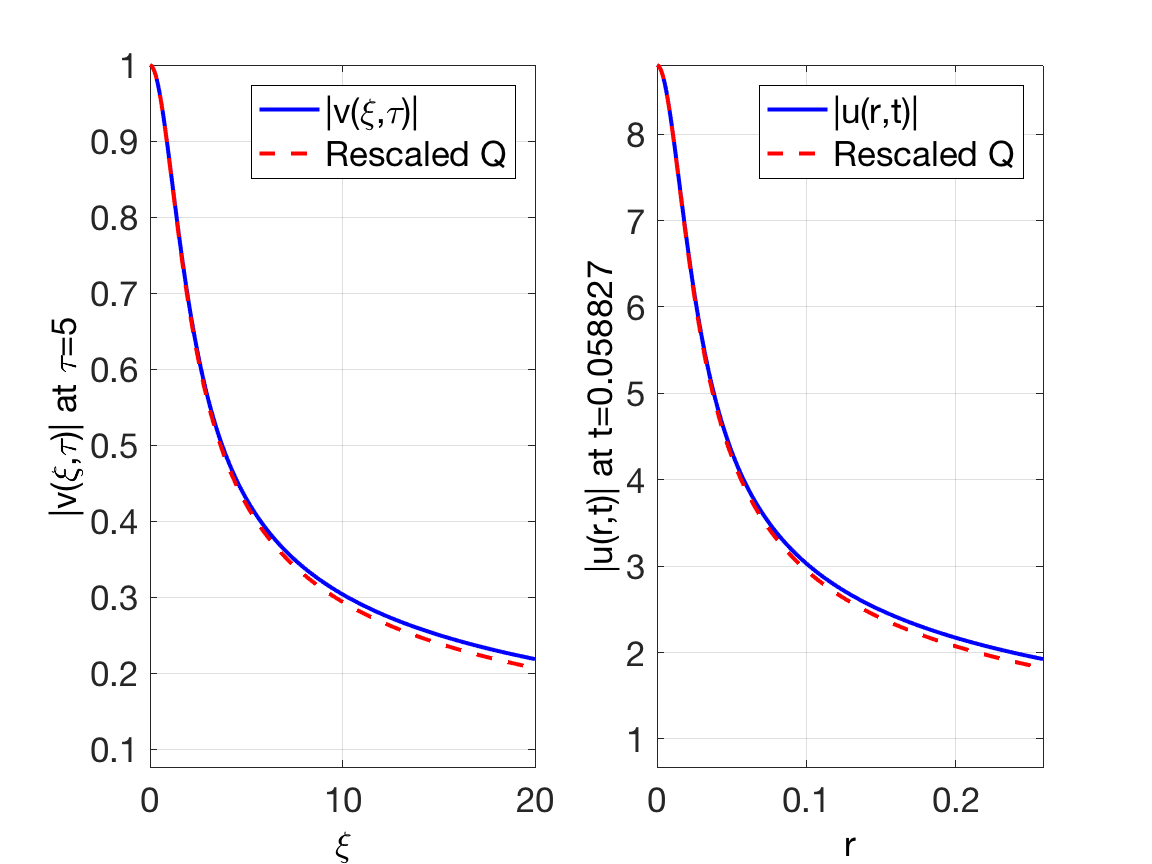}
\includegraphics[width=0.42\textwidth]{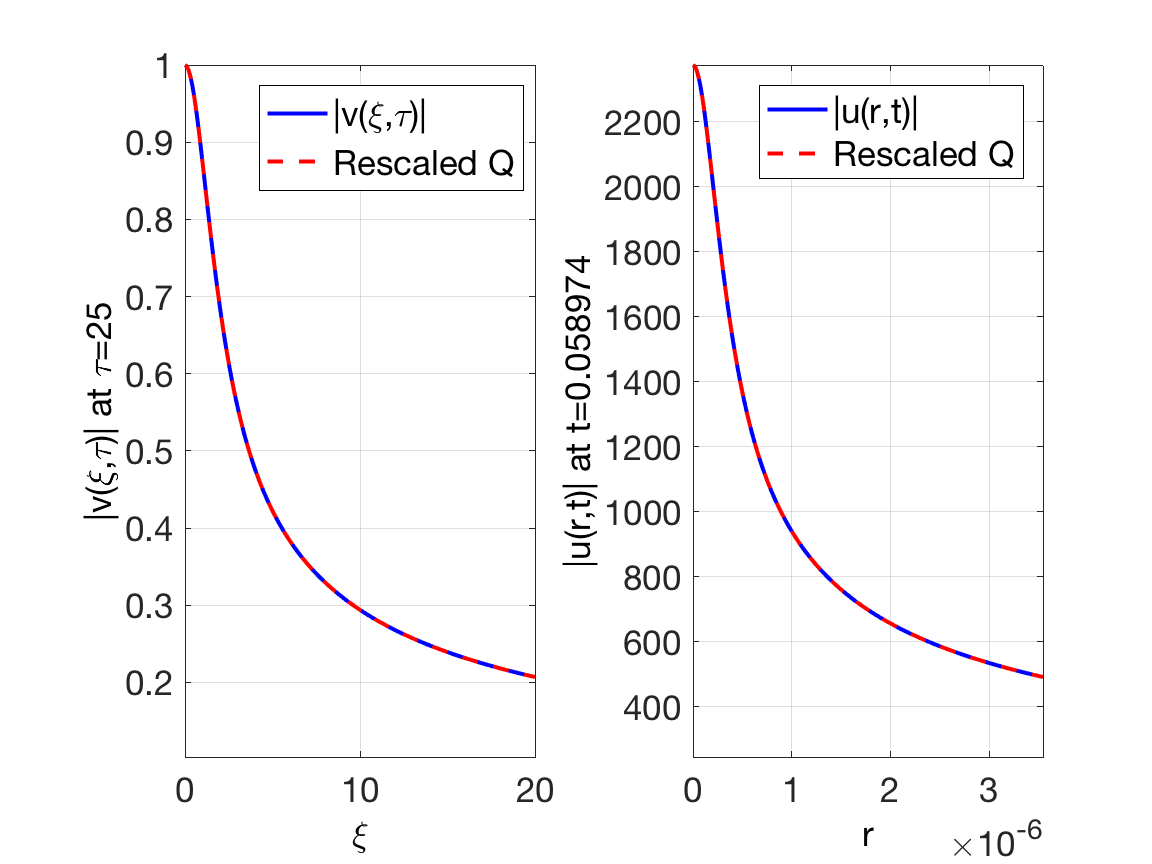}
\includegraphics[width=0.42\textwidth]{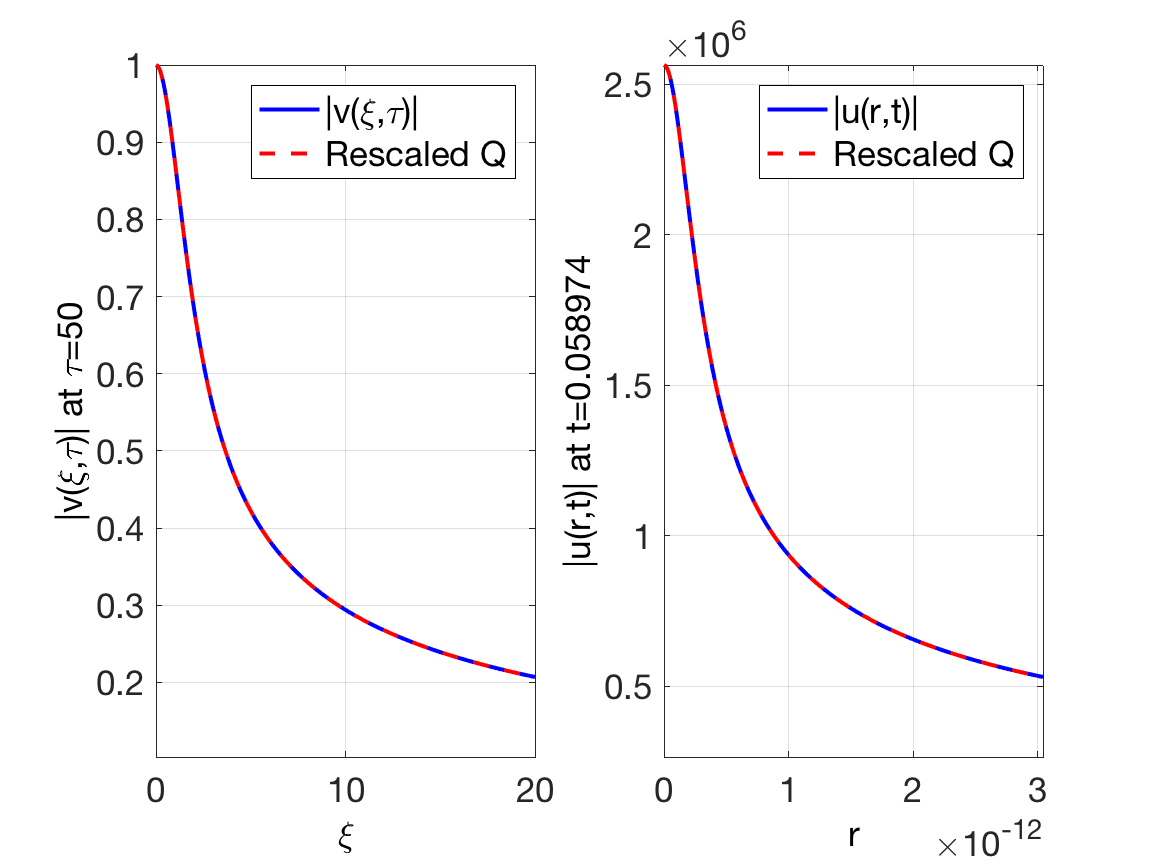}
\caption{ Blow-up profiles for the 2d quintic case at different time $\tau$ and $t$.}
\label{2d5p profiles}
%
\includegraphics[width=0.42\textwidth]{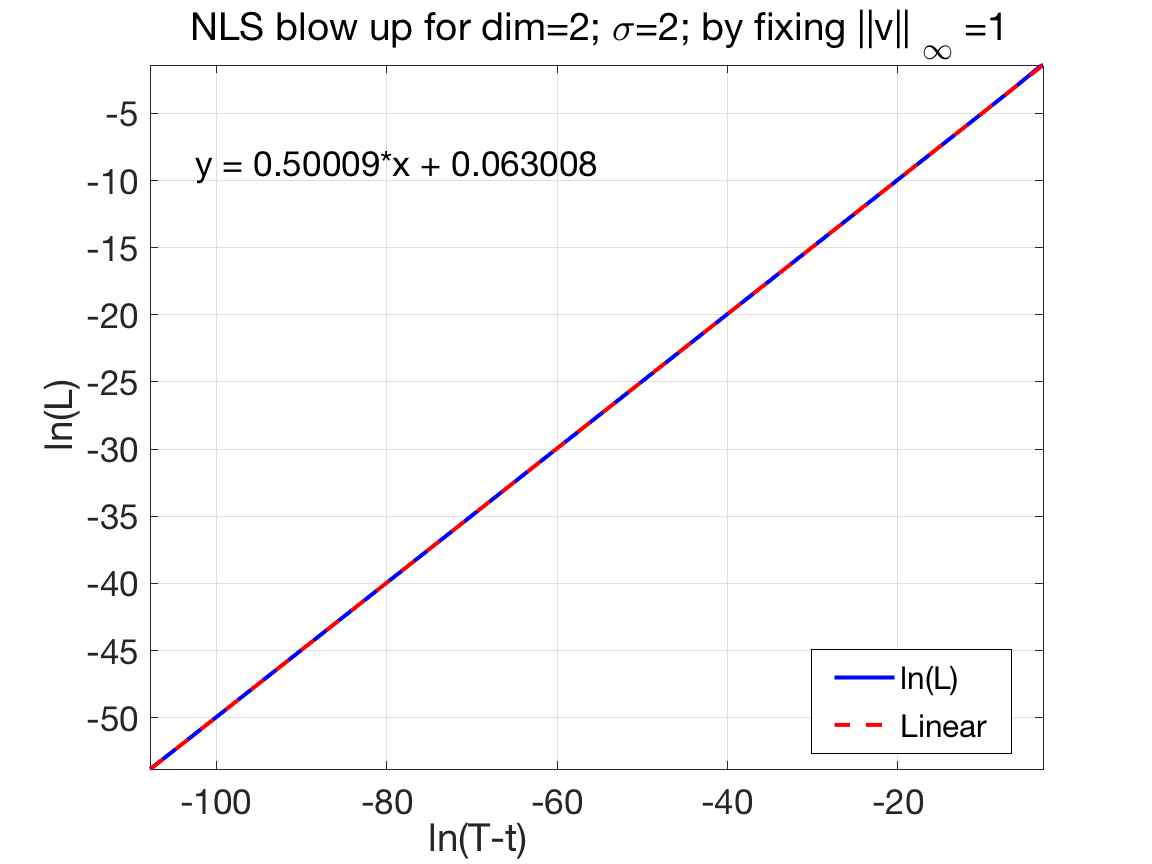}
\includegraphics[width=0.42\textwidth]{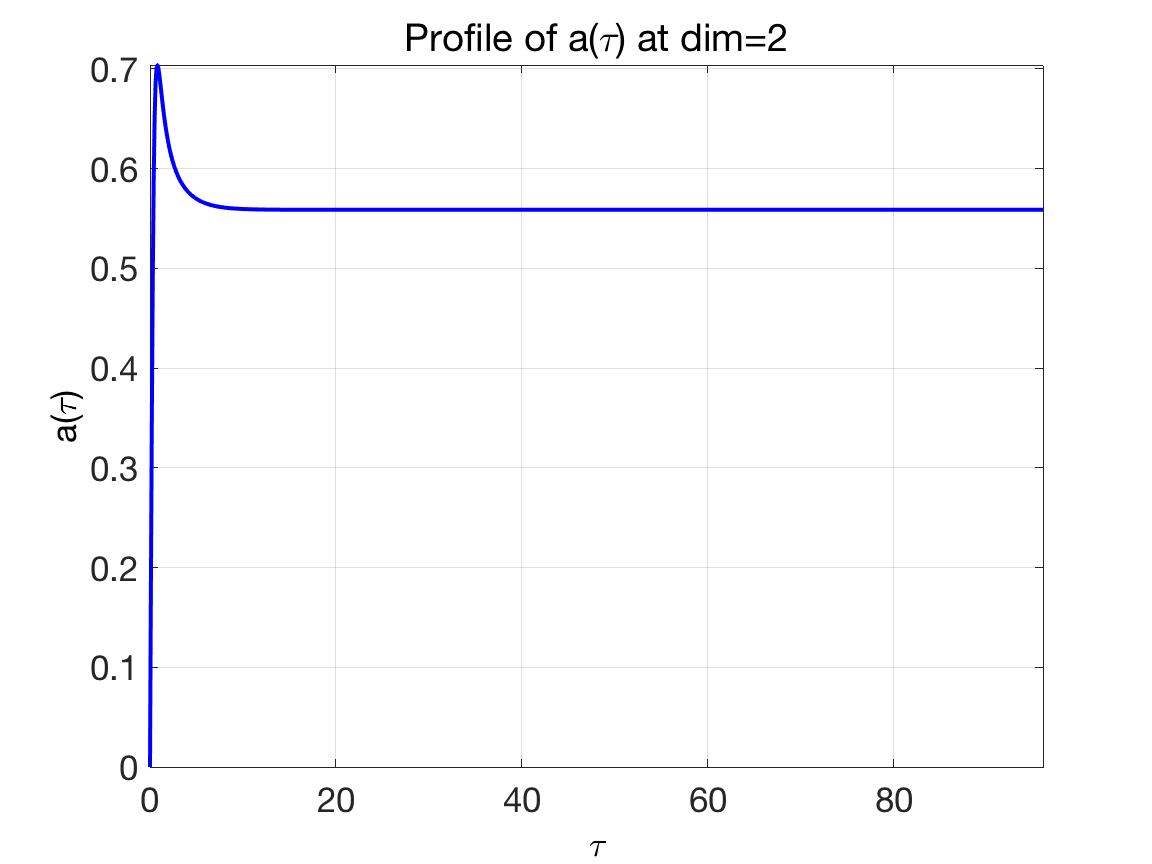}
\includegraphics[width=0.42\textwidth]{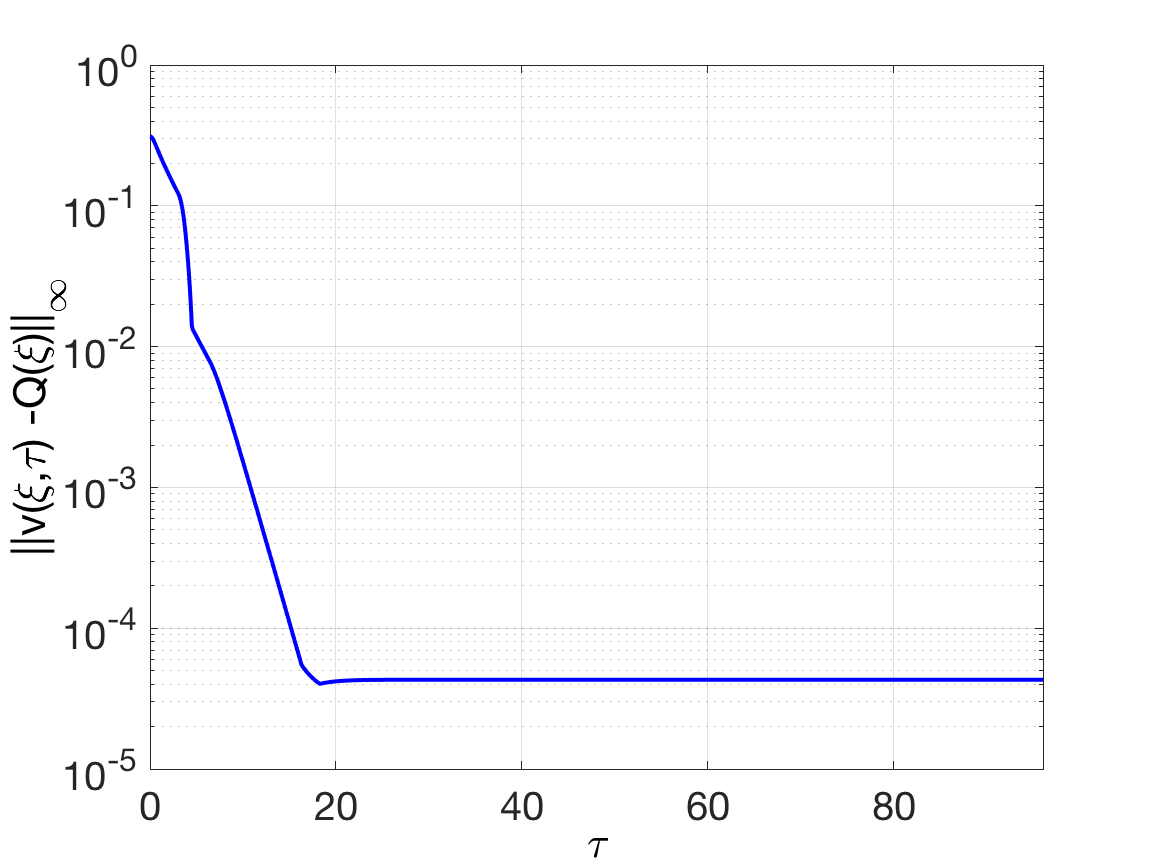}
\includegraphics[width=0.42\textwidth]{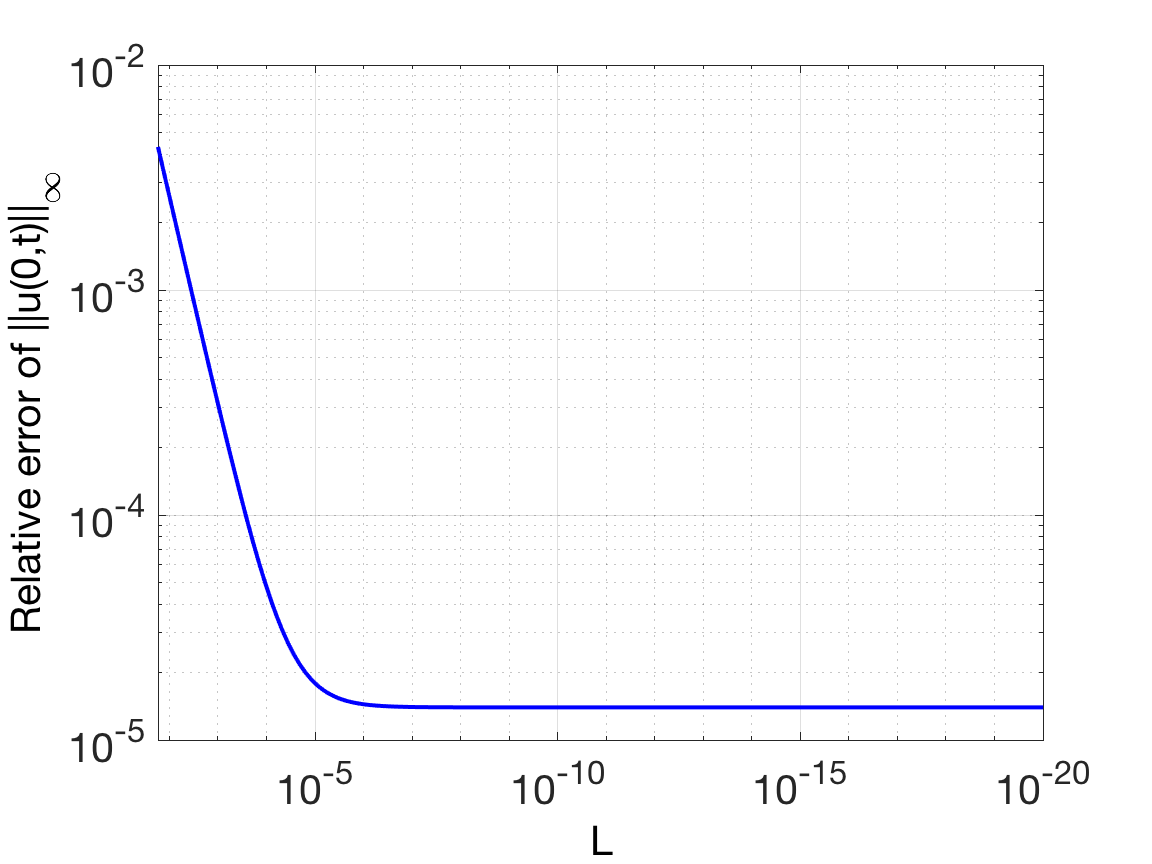}
\caption{Blow-up data for the 2d quintic case: $\ln(T-t)$ vs. $\ln(L)$ (upper left), the quantity $a(\tau)$ (upper right), the distance between $Q$ and $v$ on time $\tau$ ($\| |v(\tau)| -|Q| \|_{L^{\infty}_{\xi}}$) (lower left), the relative error with respect to the predicted blow-up rate (lower right).}
\label{2d5p data} 
\end{center}
\end{figure}

\begin{figure}
\begin{center}
\includegraphics[width=0.42\textwidth]{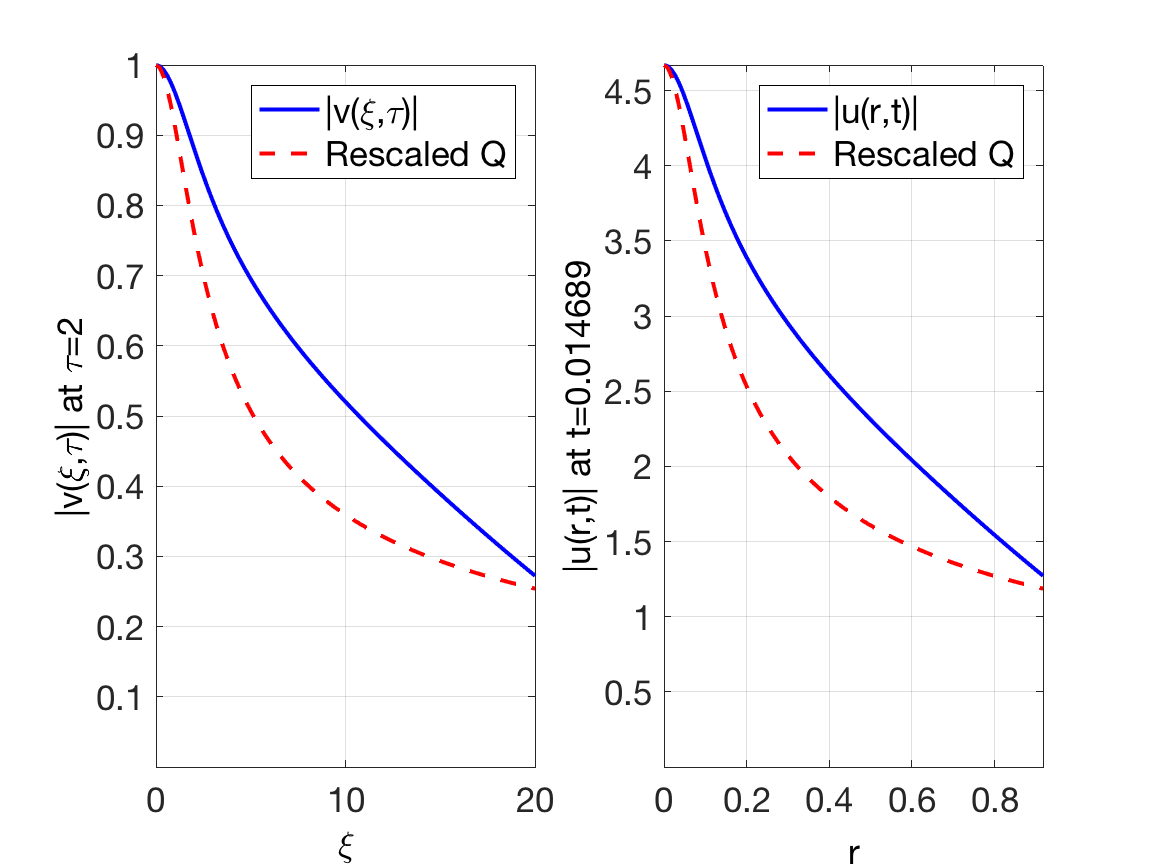}
\includegraphics[width=0.42\textwidth]{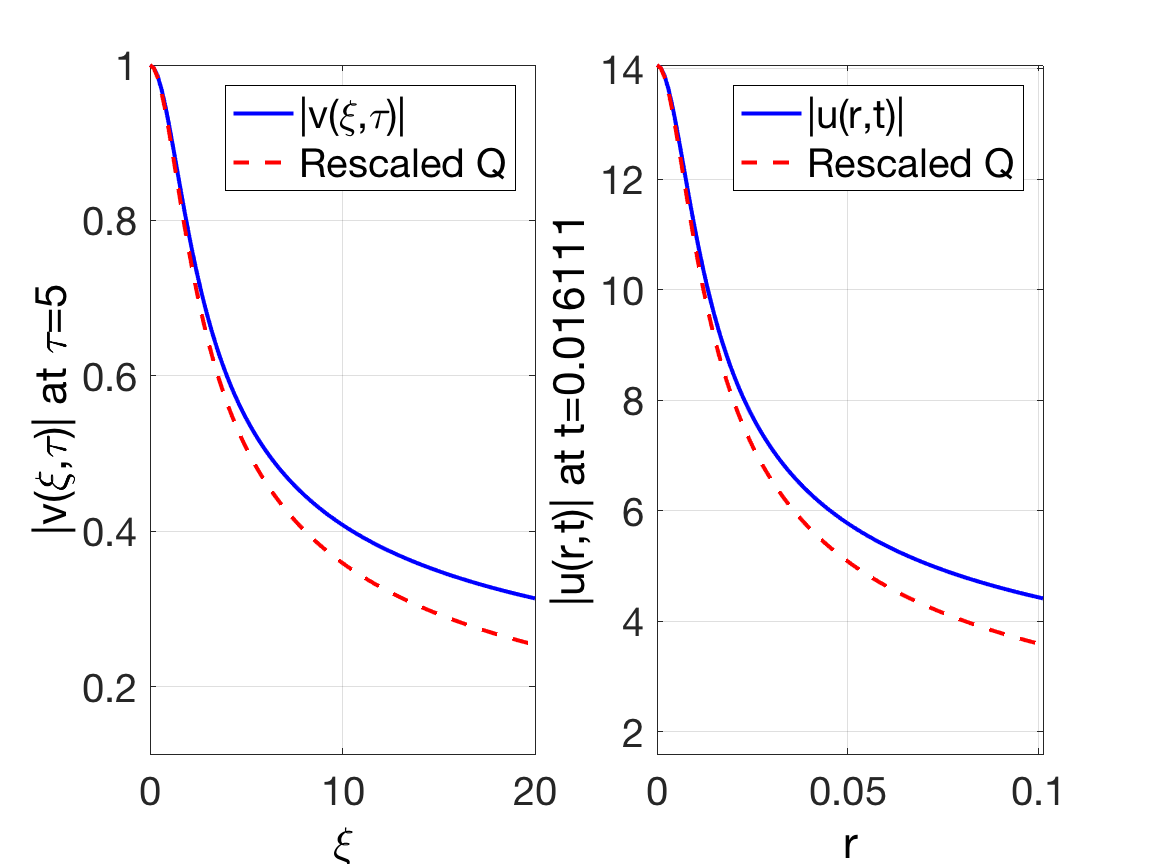}
\includegraphics[width=0.42\textwidth]{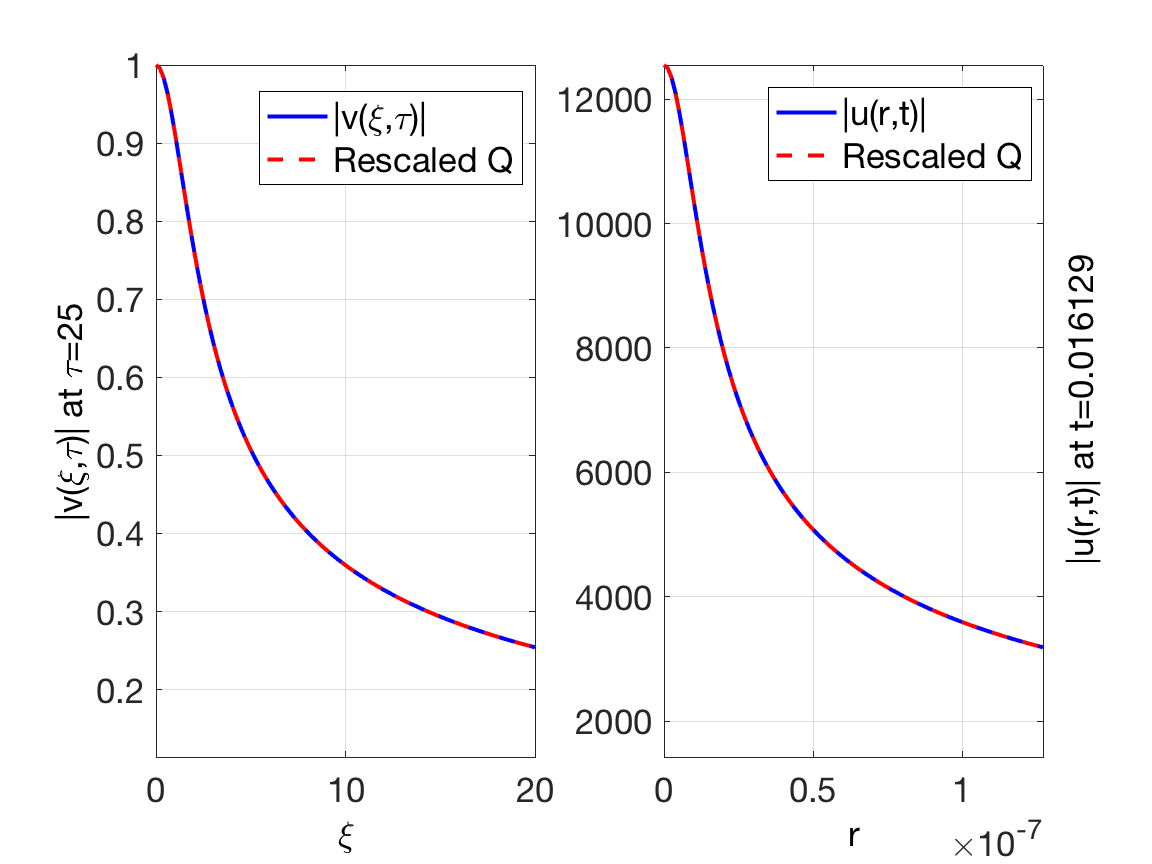}
\includegraphics[width=0.42\textwidth]{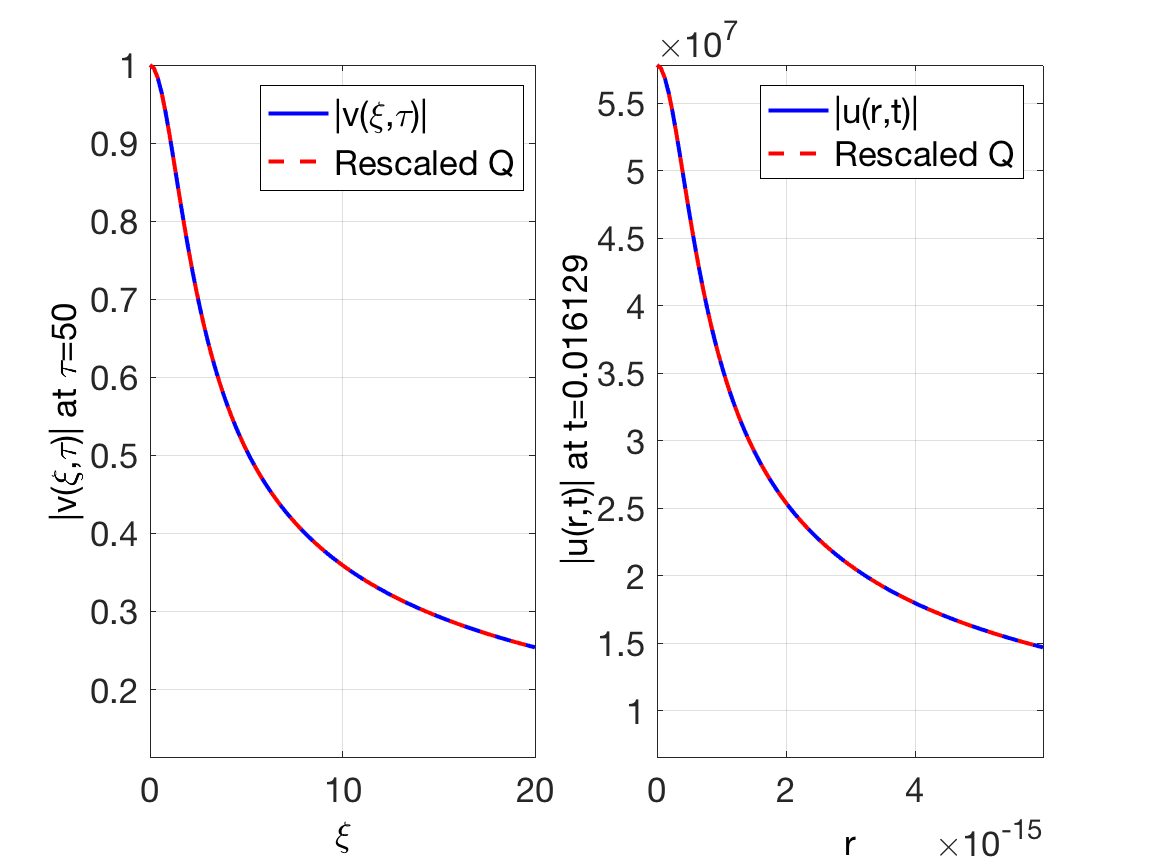}
\caption{ Blow-up profiles for the 3d quintic case at different time $\tau$ and $t$.}
\label{3d5p profiles}
%
\includegraphics[width=0.42\textwidth]{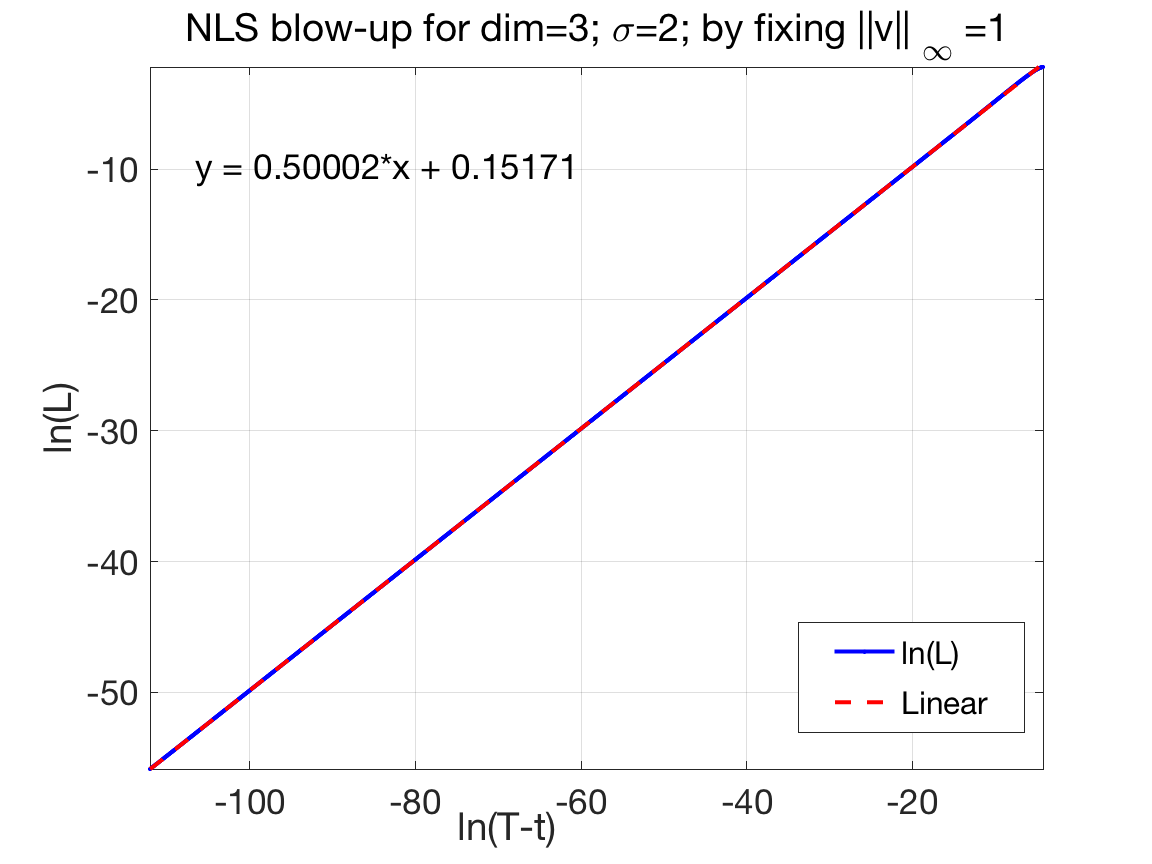}
\includegraphics[width=0.42\textwidth]{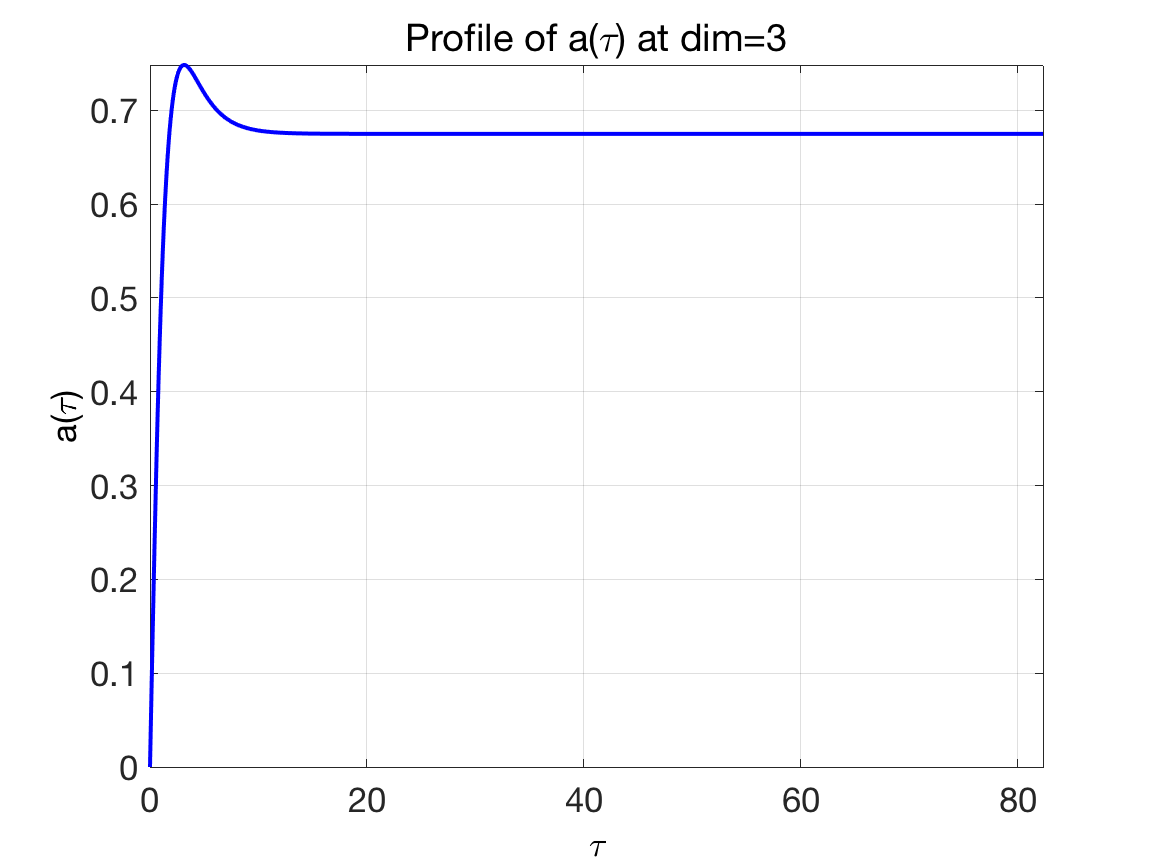}
\includegraphics[width=0.42\textwidth]{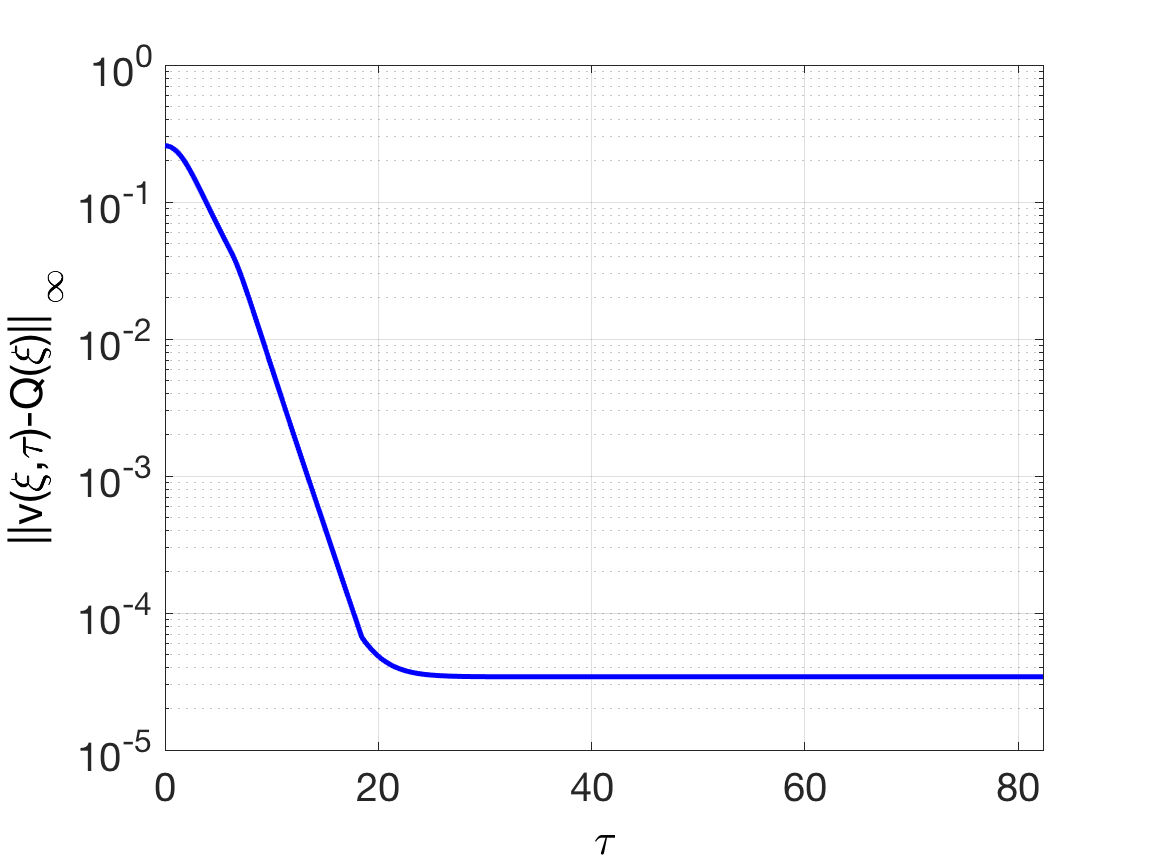}
\includegraphics[width=0.42\textwidth]{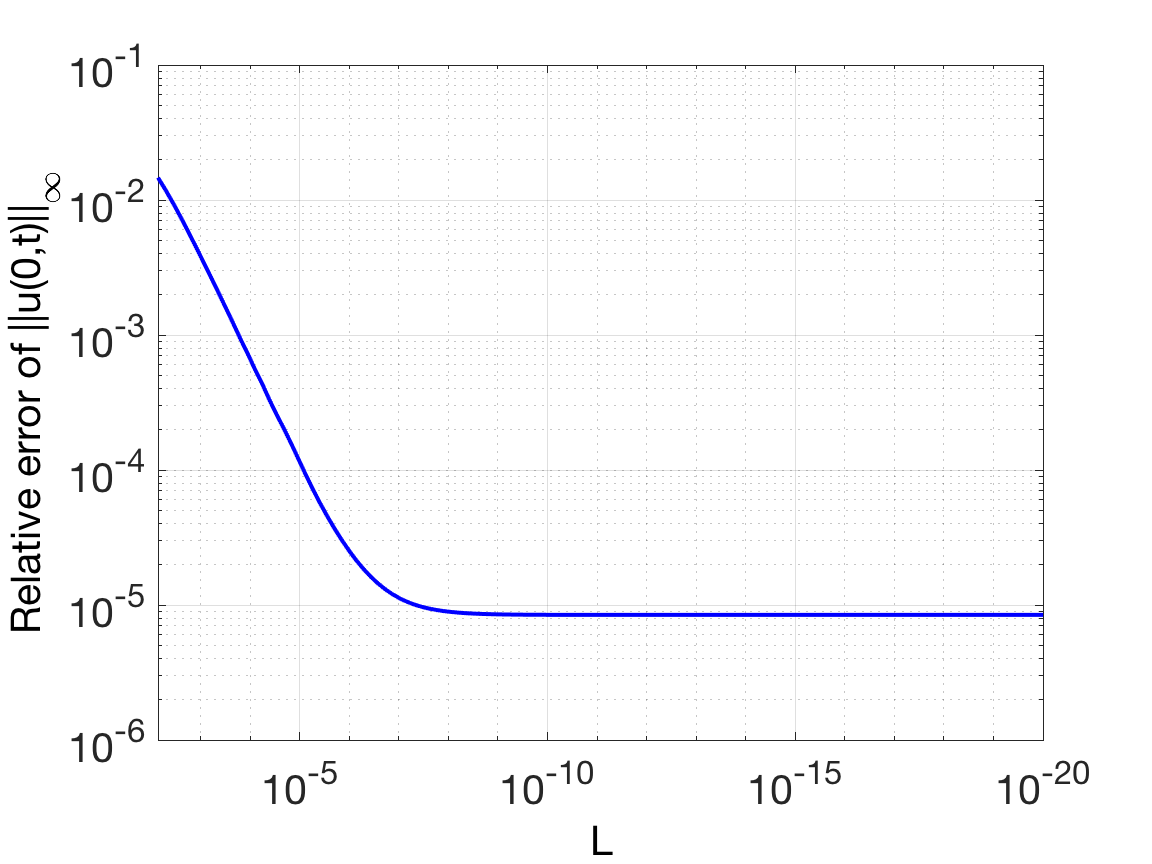}
\caption{ Blow-up data for the 3d quintic case: $\ln(T-t)$ vs. $\ln(L)$ (upper left), the quantity $a(\tau)$ (upper right), the distance between $Q$ and $v$ on time $\tau$ ($\| |v(\tau)| -|Q| \|_{L^{\infty}_{\xi}}$) (lower left), the relative error with respect to the predicted blow-up rate (lower right).  }
\label{3d5p data}
\end{center}
\end{figure}

\begin{figure}
\begin{center}
\includegraphics[width=0.42\textwidth]{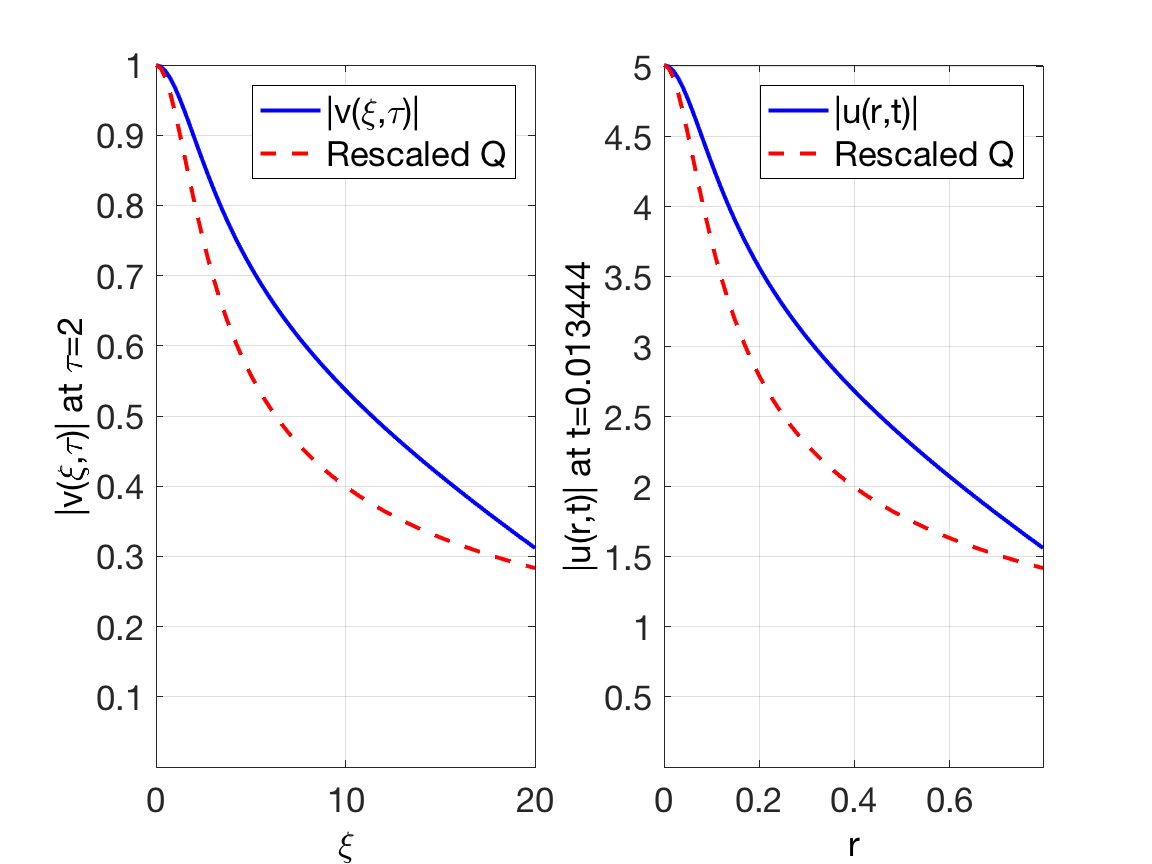}
\includegraphics[width=0.42\textwidth]{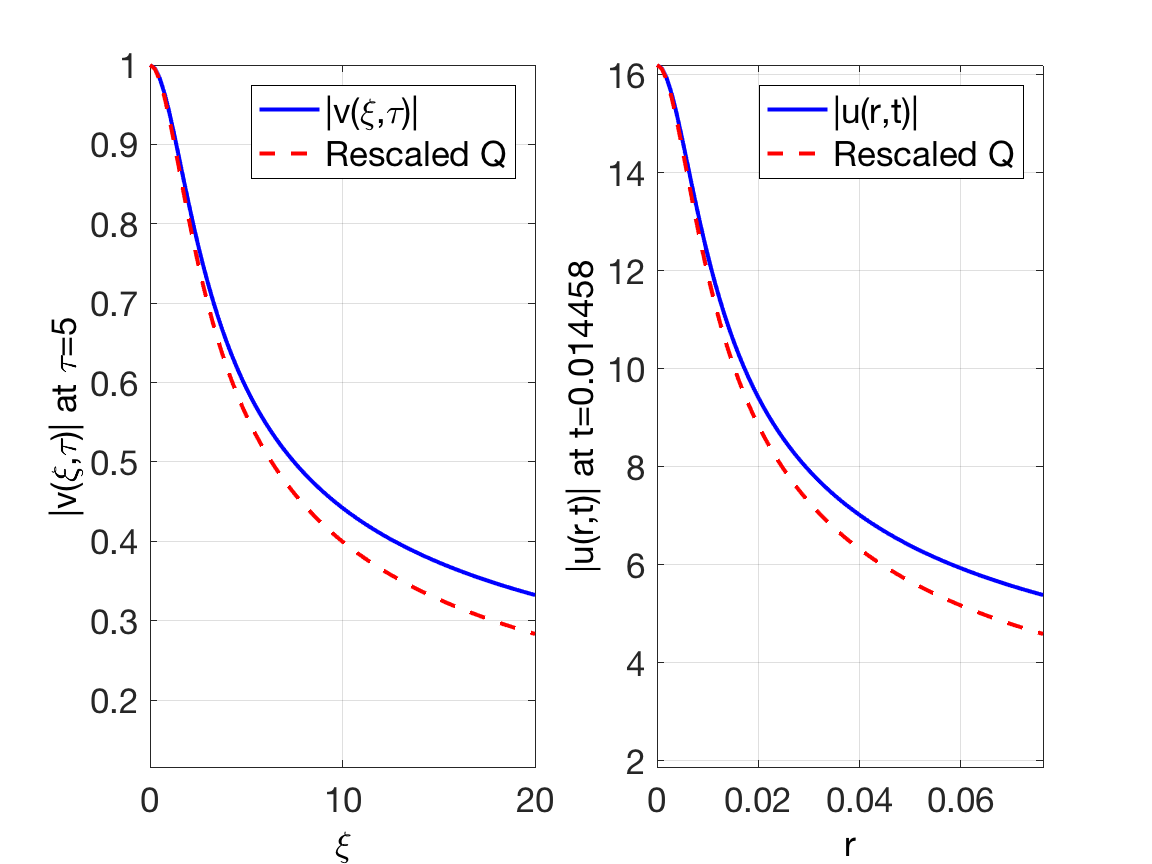}
\includegraphics[width=0.42\textwidth]{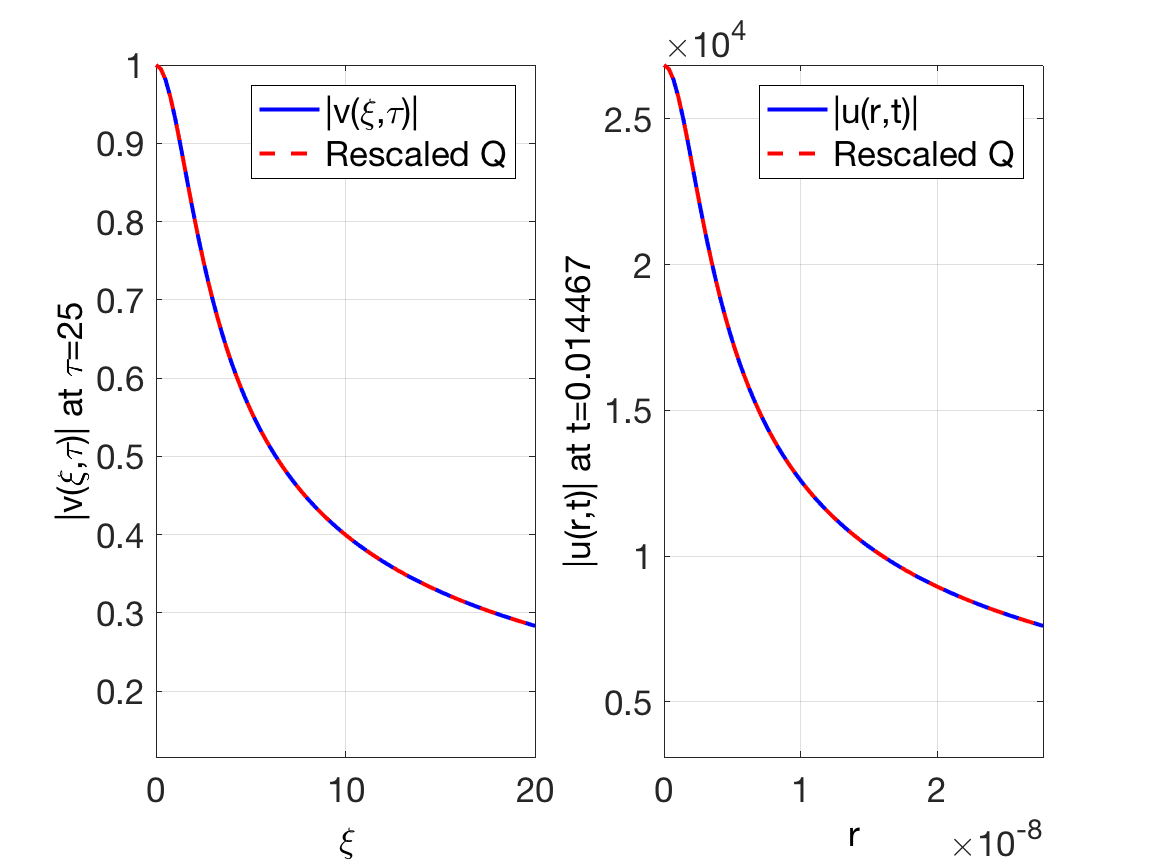}
\includegraphics[width=0.42\textwidth]{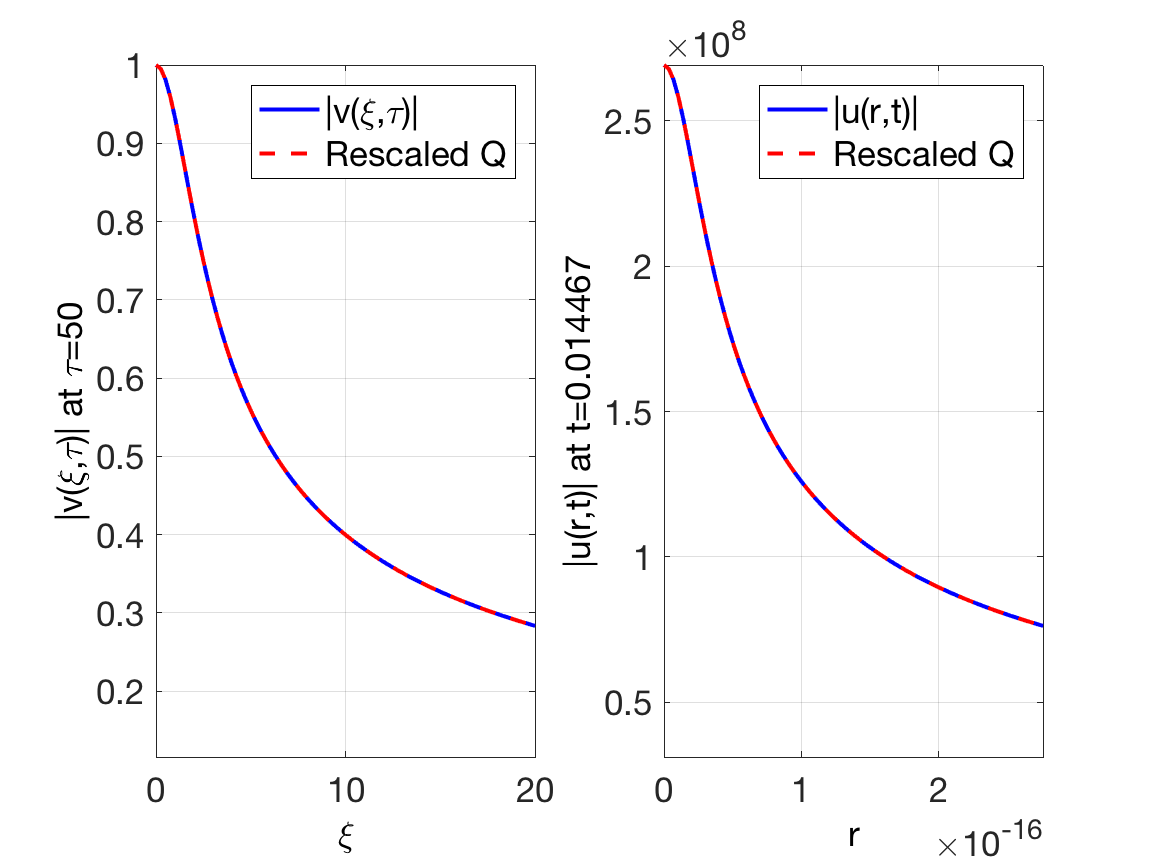}
\caption{ Blow-up profiles for the 4d quintic case at different time $\tau$ and $t$.}
\label{4d5p profiles}
%
\includegraphics[width=0.42\textwidth]{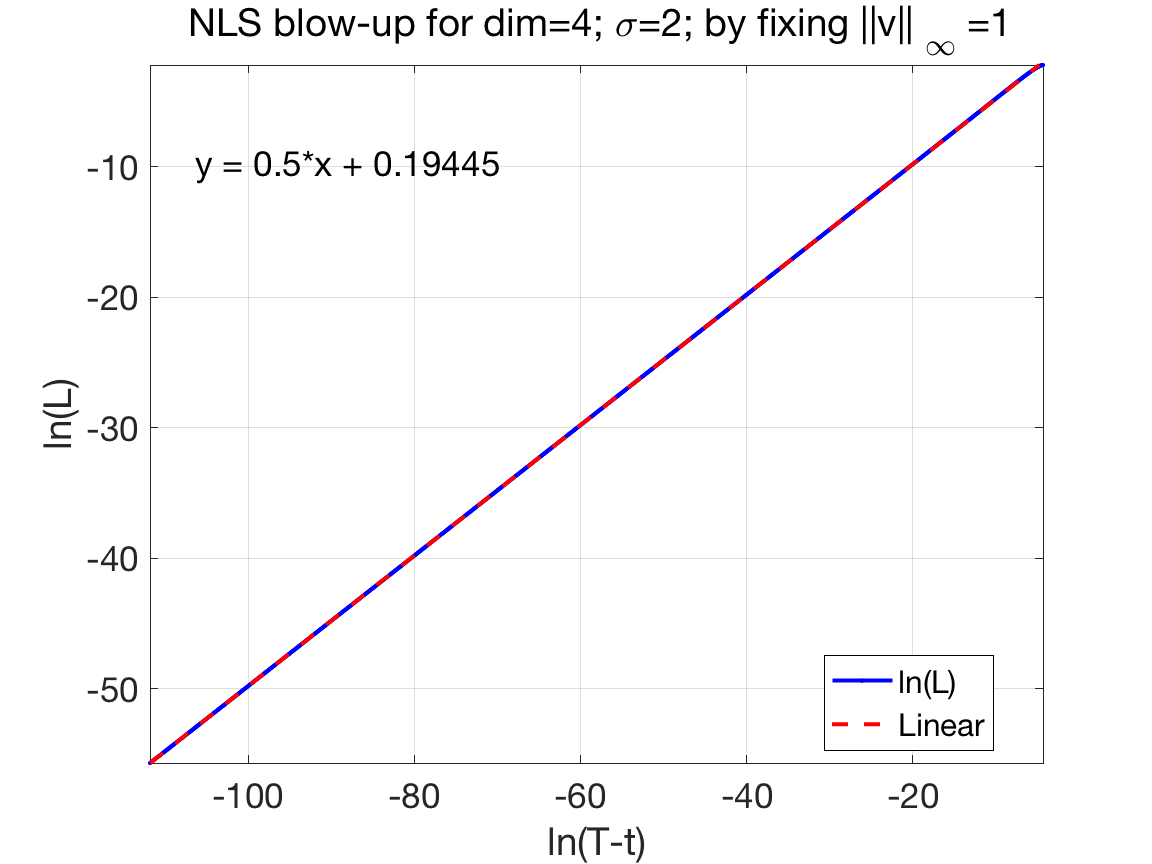}
\includegraphics[width=0.42\textwidth]{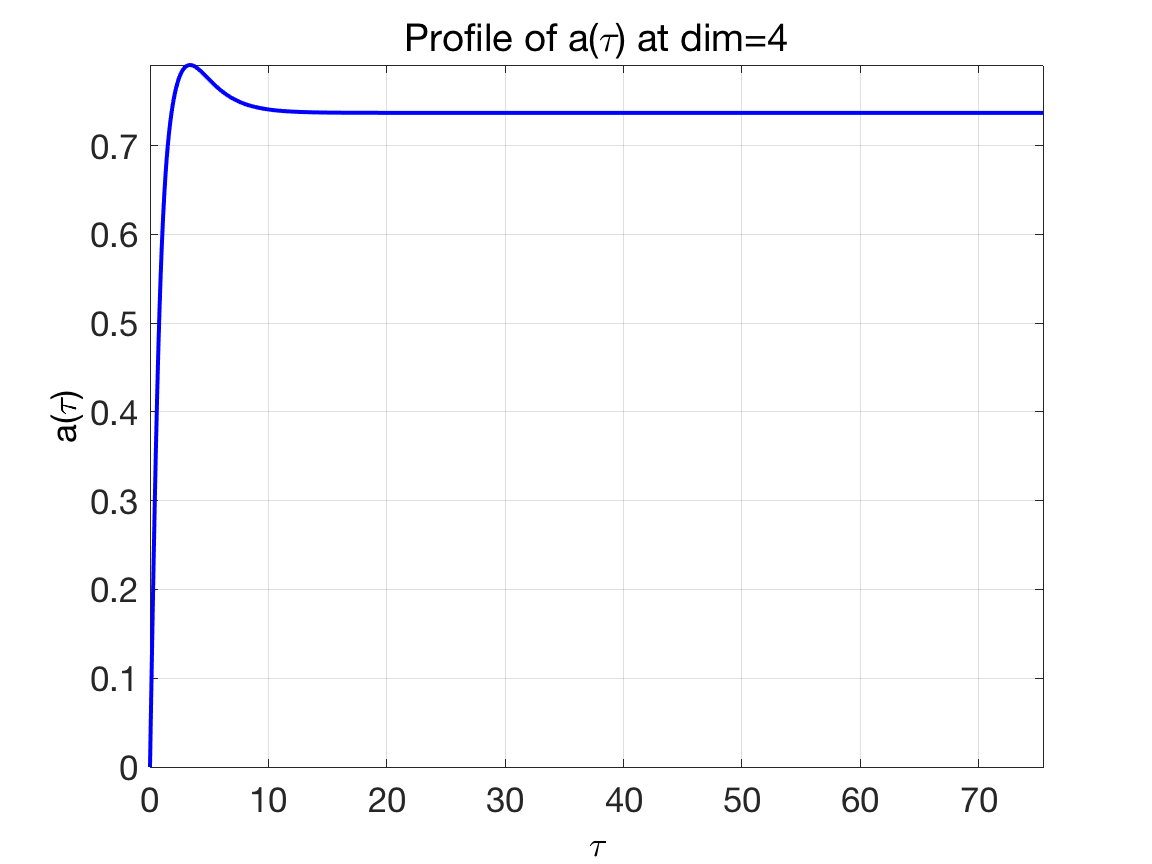}
\includegraphics[width=0.42\textwidth]{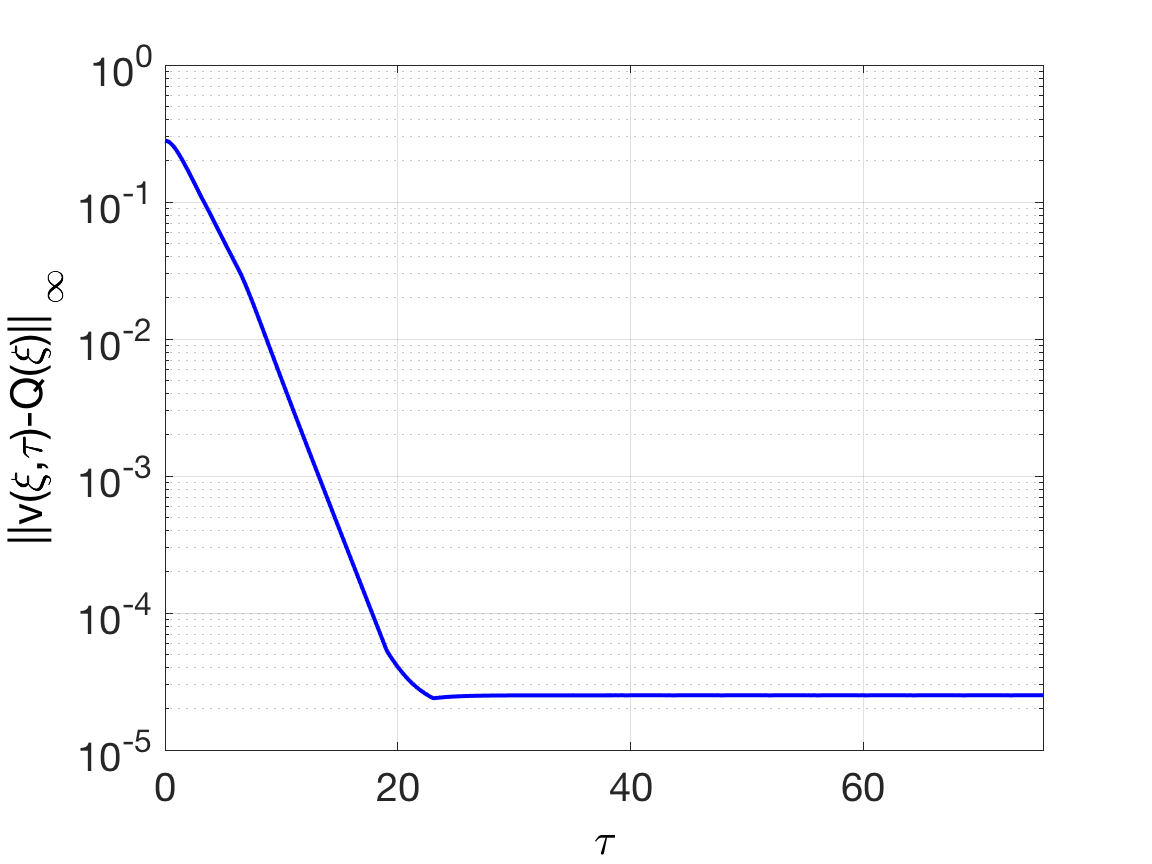}
\includegraphics[width=0.42\textwidth]{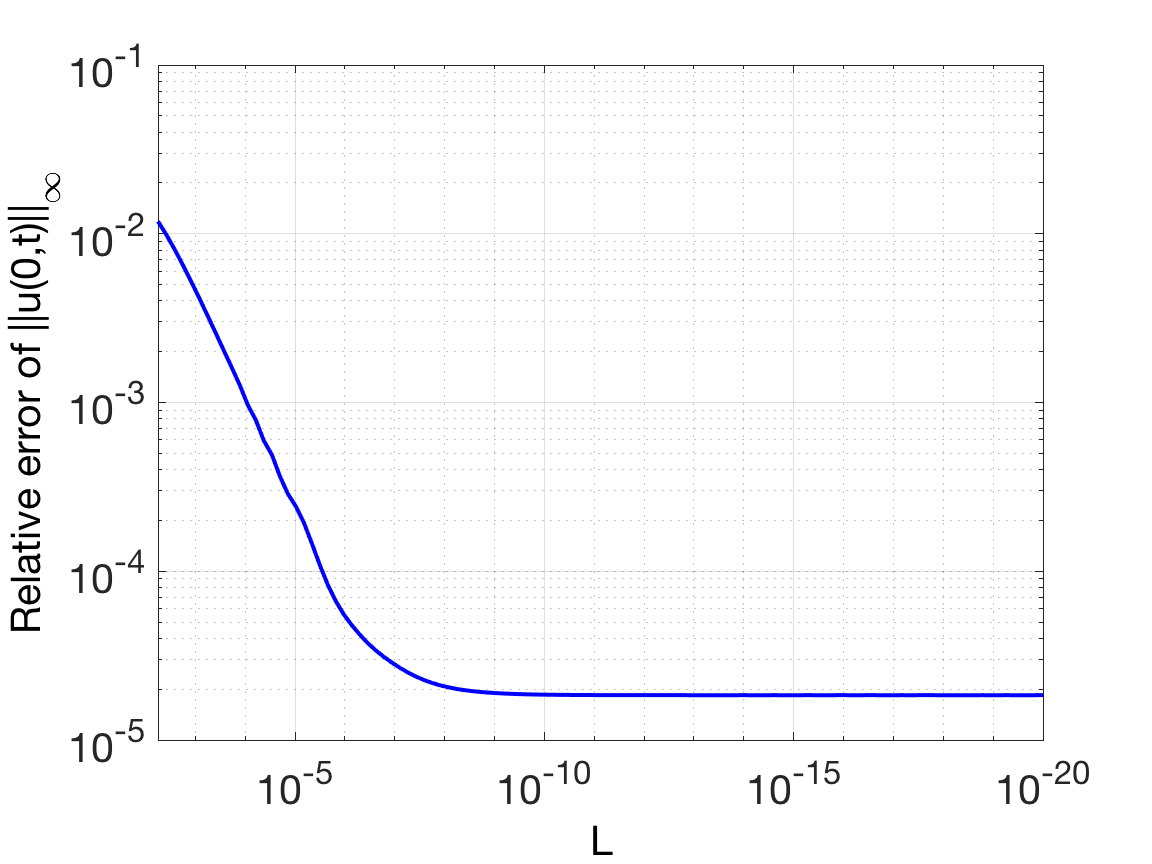}
\caption{ Blow-up data for the 4d quintic case: $\ln(T-t)$ vs. $\ln(L)$ (upper left), the quantity $a(\tau)$ (upper right), the distance between $Q$ and $v$ on time $\tau$ ($\| |v(\tau)| -|Q| \|_{L^{\infty}_{\xi}}$) (lower left), the relative error with respect to the predicted blow-up rate (lower right).  }
\label{4d5p data}
\end{center}
\end{figure}

\begin{figure}
\begin{center}
\includegraphics[width=0.42\textwidth]{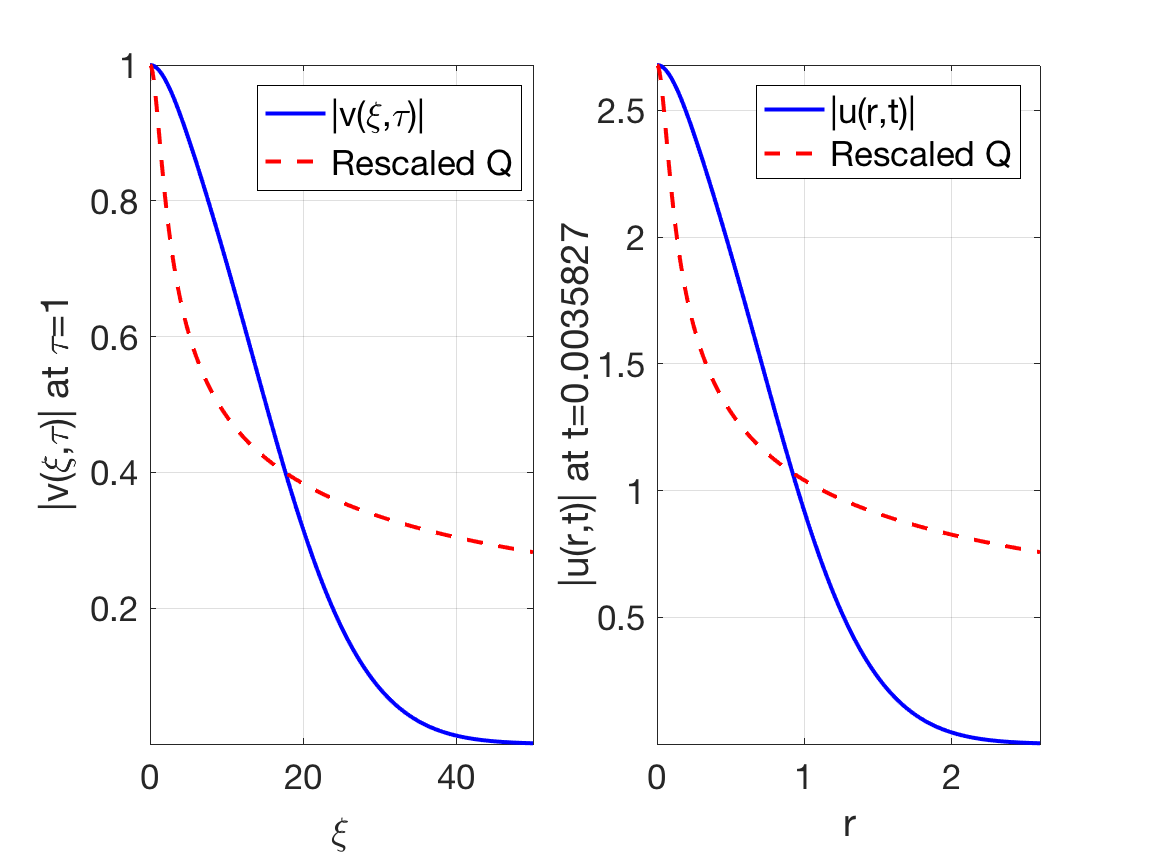}
\includegraphics[width=0.42\textwidth]{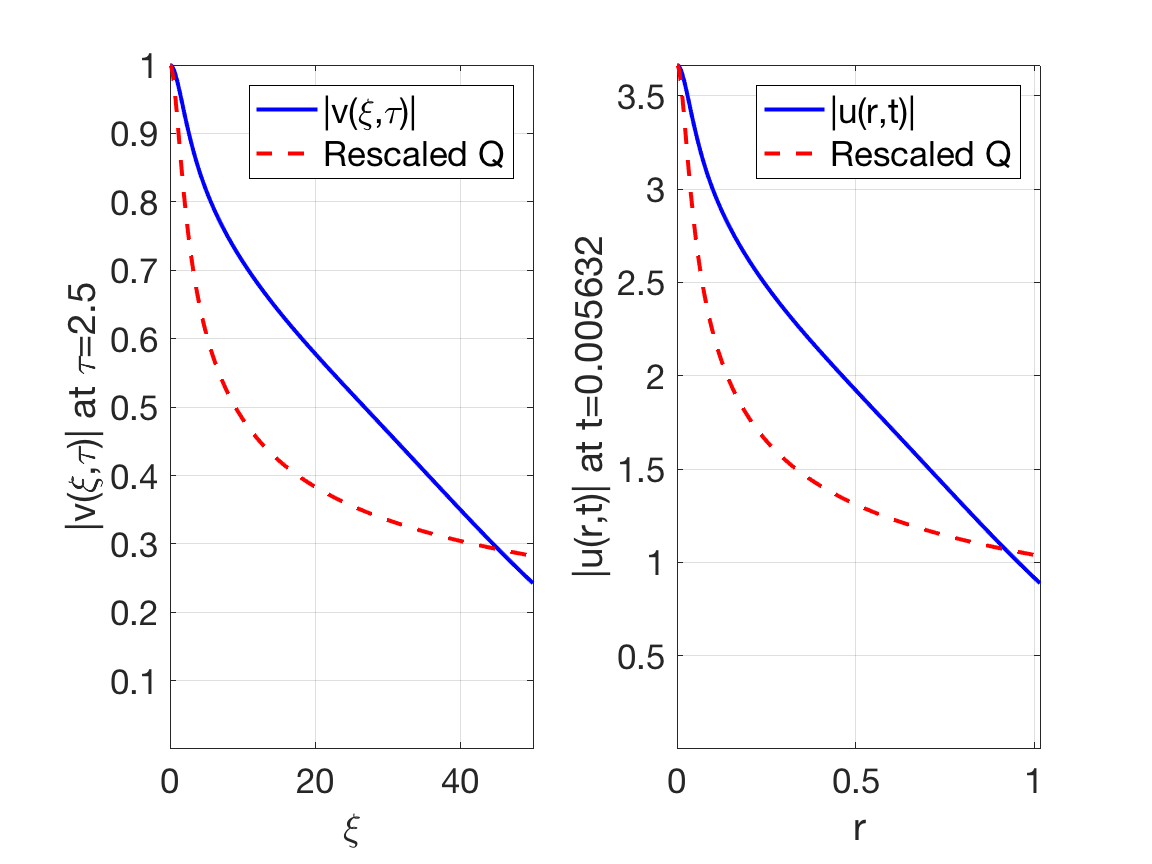}
\includegraphics[width=0.42\textwidth]{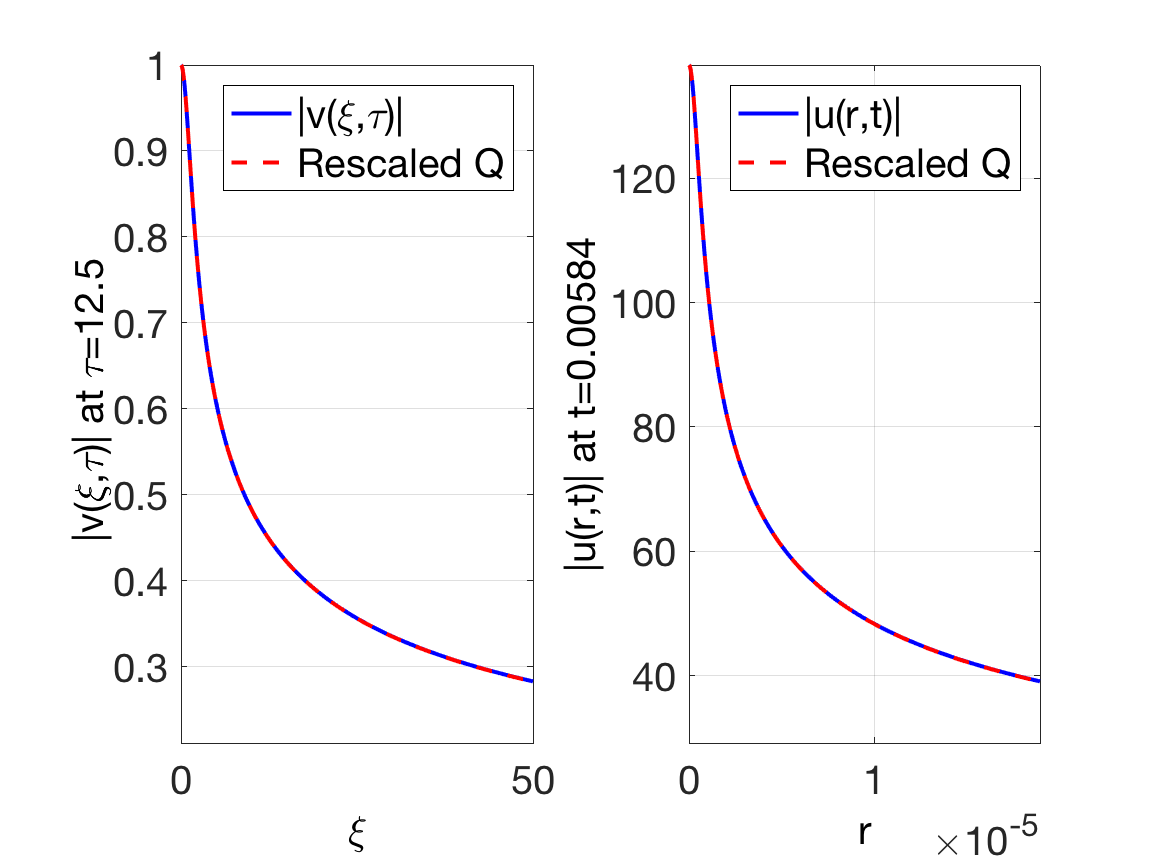}
\includegraphics[width=0.42\textwidth]{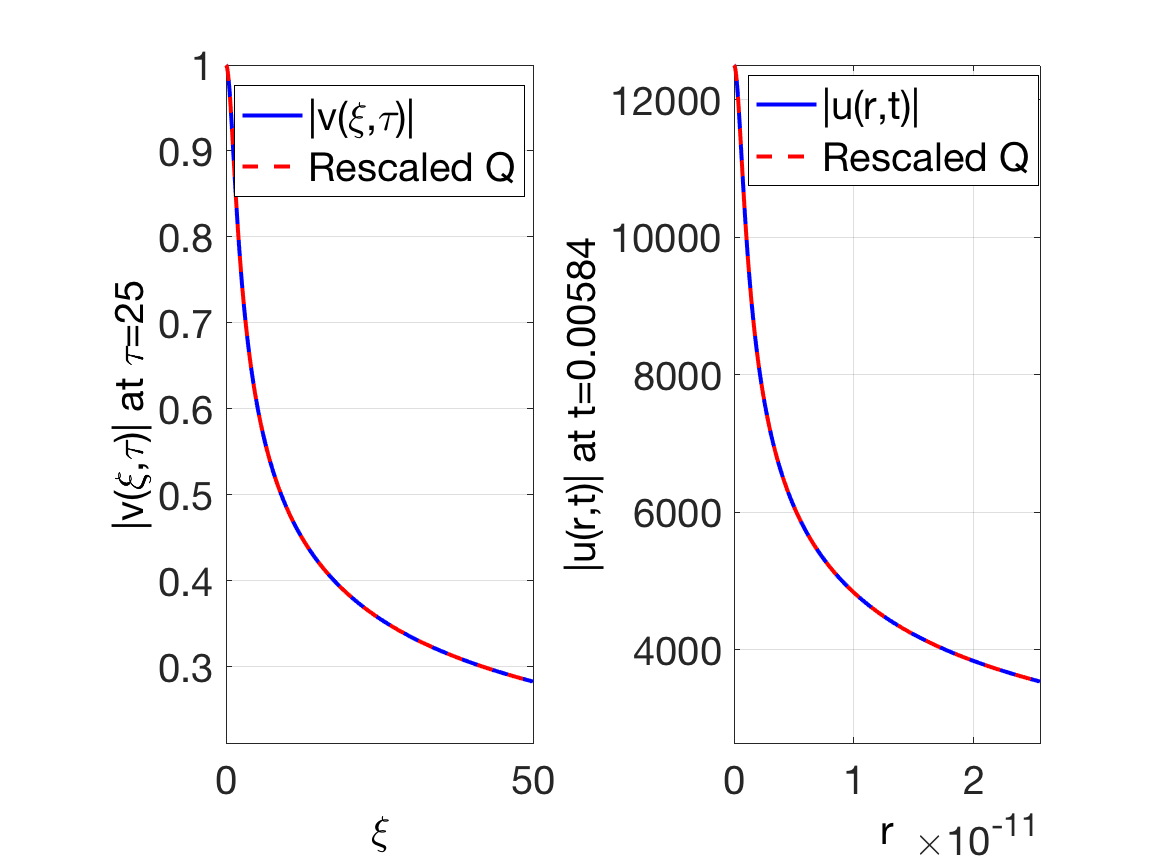}
\caption{ Blow-up profiles for the 3d septic case at different time $\tau$ and $t$.}
\label{3d7p profiles}
%
\includegraphics[width=0.42\textwidth]{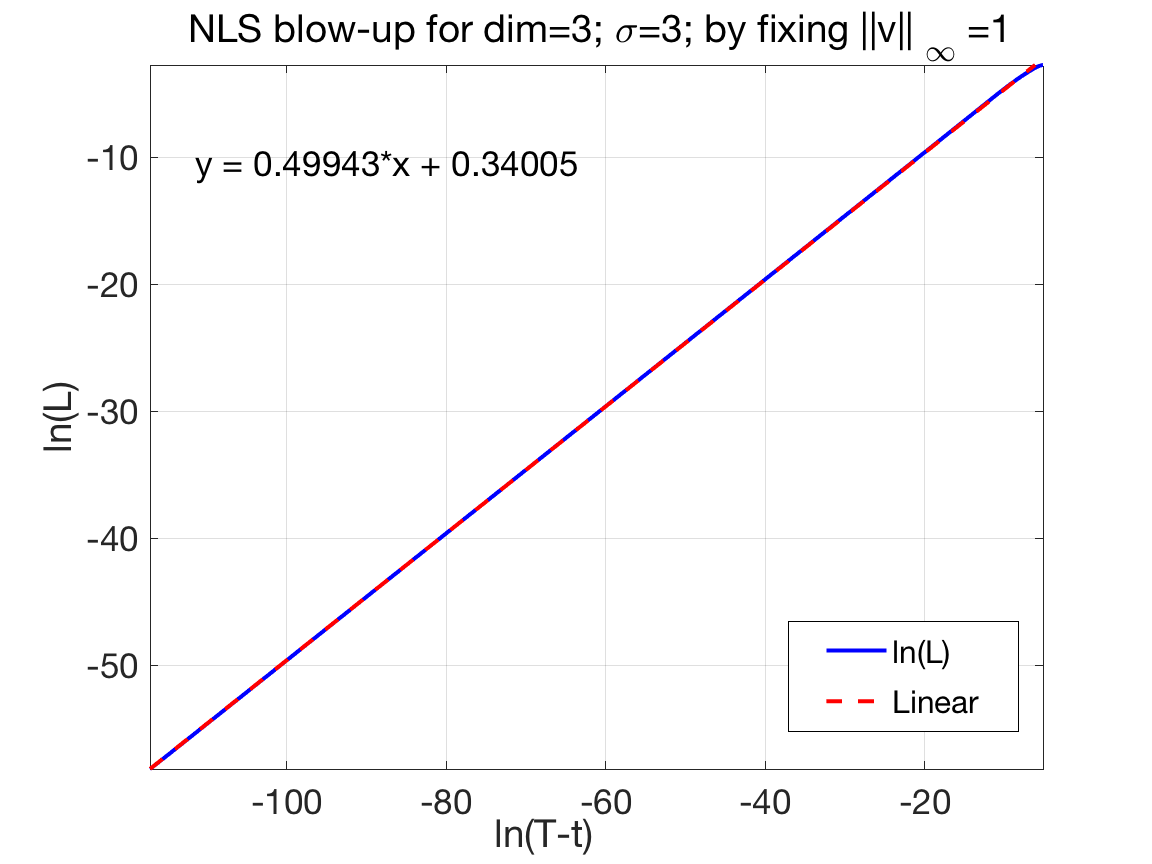}
\includegraphics[width=0.42\textwidth]{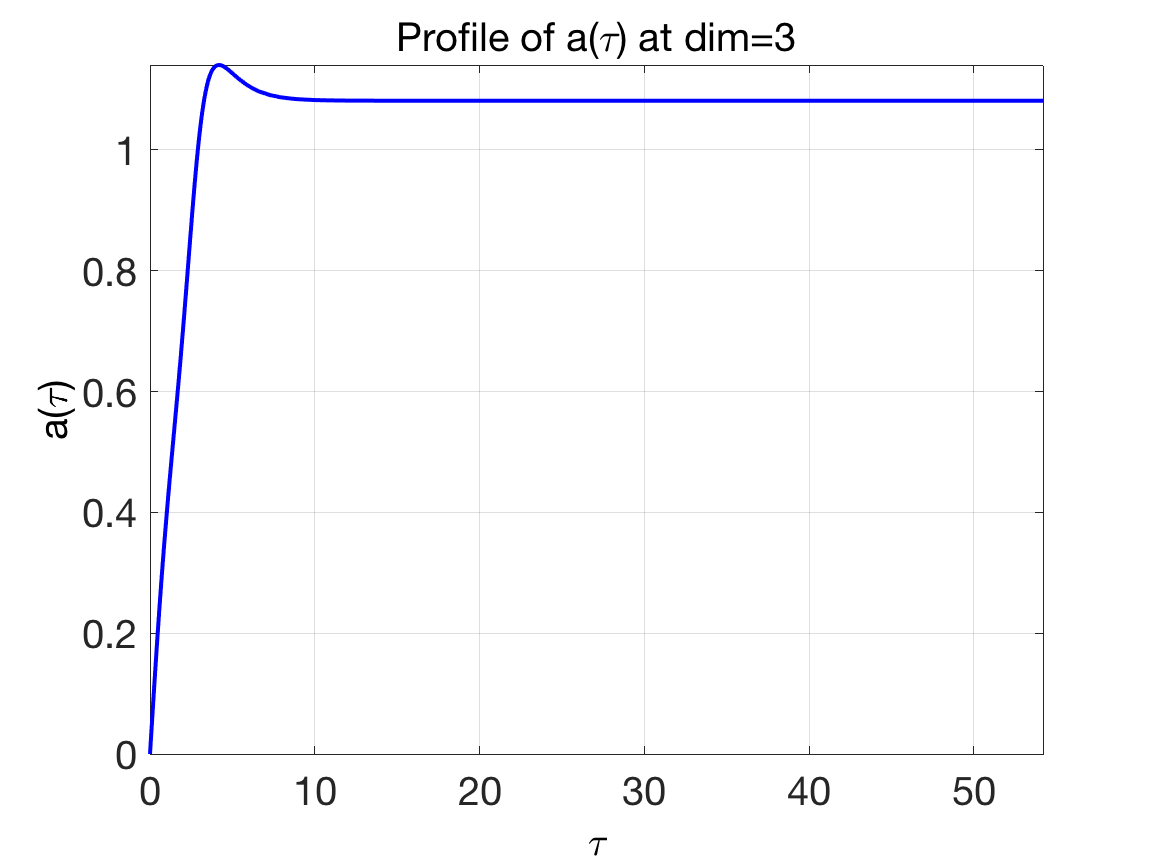}
\includegraphics[width=0.42\textwidth]{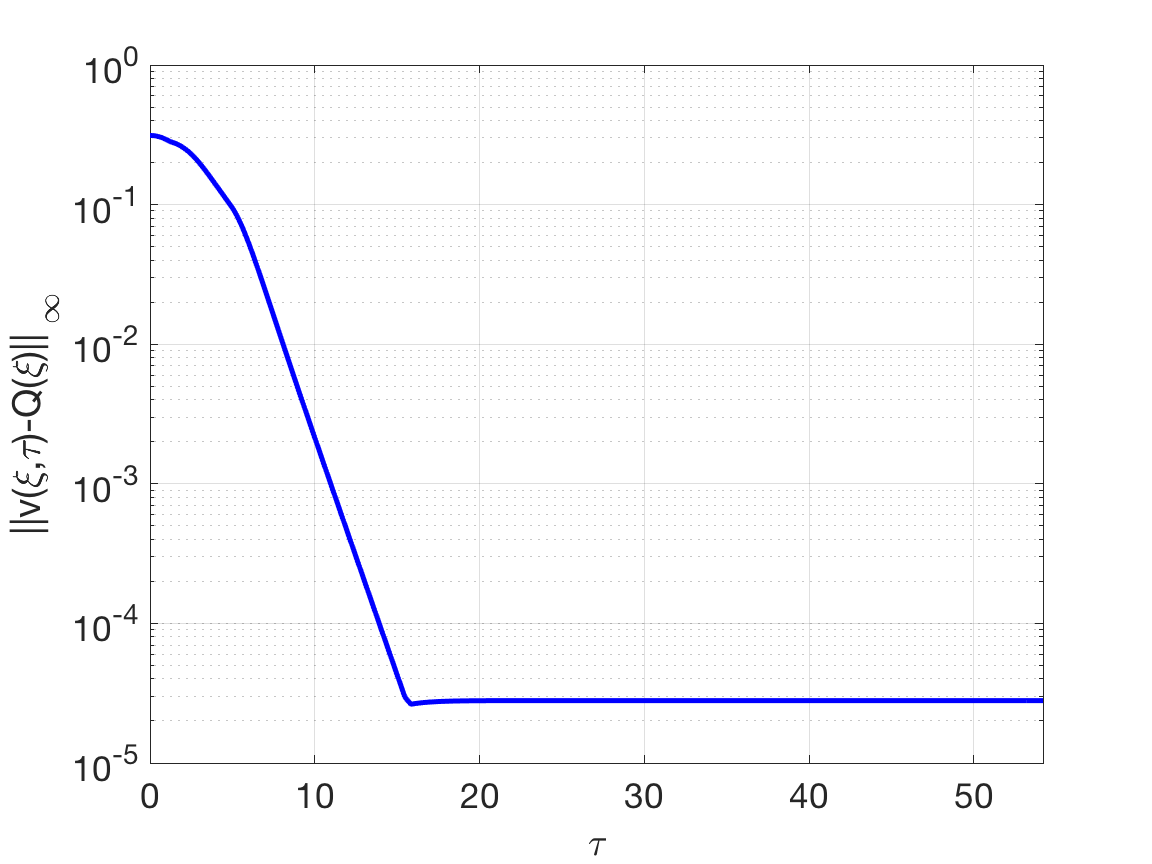}
\includegraphics[width=0.42\textwidth]{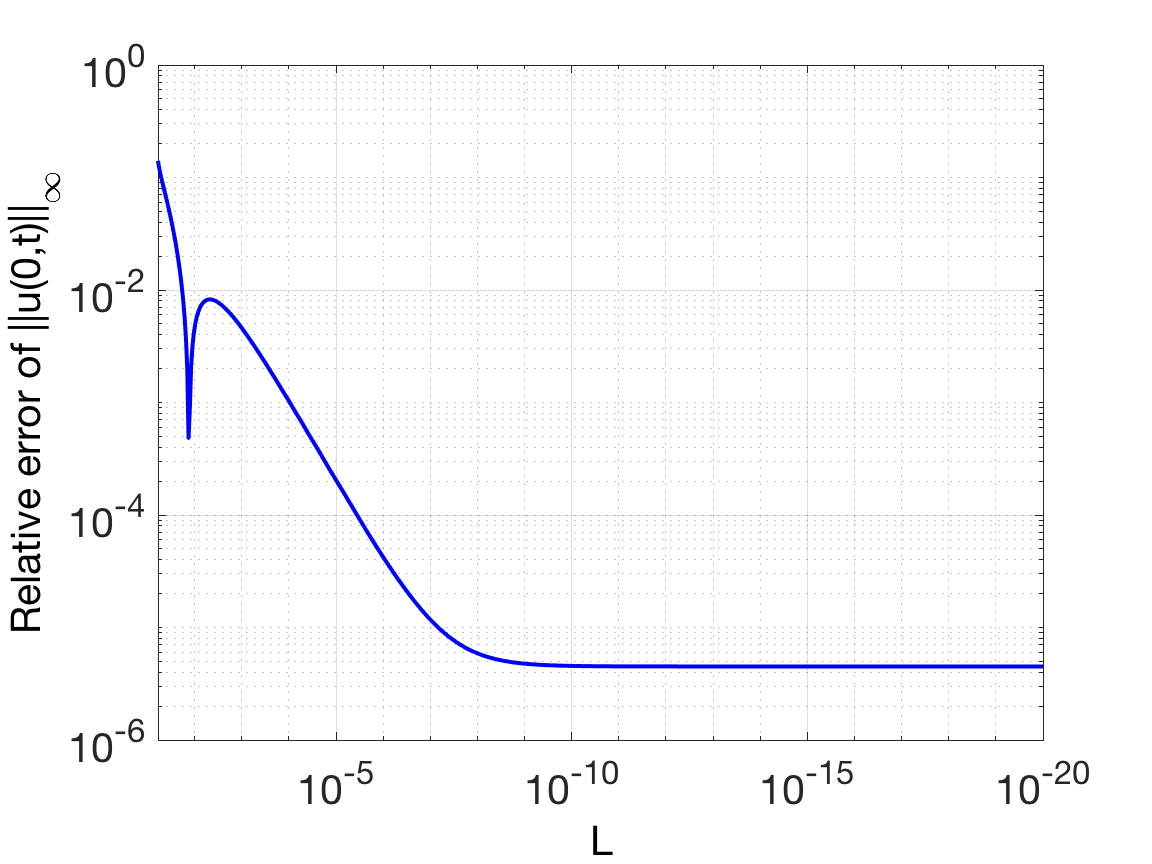}
\caption{ Blow-up data for the 3d septic case: $\ln(T-t)$ vs. $\ln(L)$ (upper left), the quantity $a(\tau)$ (upper right), the distance between $Q$ and $v$ on time $\tau$ ($\| |v(\tau)| -|Q| \|_{L^{\infty}_{\xi}}$) (lower left), the relative error with respect to the predicted blow-up rate (lower right).  }
\label{3d7p data}
\end{center}
\end{figure}




\clearpage

\bibliography{Kai_bib}

\begin{thebibliography}{10}

\bibitem{ADKM2003}
G.~D. Akrivis, V.~A. Dougalis, O.~A. Karakashian, and W.~R. McKinney.
\newblock Numerical approximation of blow-up of radially symmetric solutions of
  the nonlinear {S}chr\"odinger equation.
\newblock {\em SIAM J. Sci. Comput.}, 25(1):186--212, 2003.

\bibitem{BKW2006}
C.~Budd, O.~Koch, and E.~Weinm\"uller.
\newblock Computation of self-similar solution profiles for the nonlinear
  {S}chr\"odinger equation.
\newblock {\em Computing}, 77(4):335--346, 2006.

\bibitem{Budd2002}
C.~J. Budd.
\newblock Asymptotics of multibump blow-up self-similar solutions of the
  nonlinear schr\"odinger equation.
\newblock {\em SIAM Journal on Applied Mathematics}, 62(3):801--830, 2002.

\bibitem{BCR1999}
C.~J. Budd, S.~Chen, and R.~D. Russell.
\newblock New self-similar solutions of the nonlinear {S}chr\"odinger equation
  with moving mesh computations.
\newblock {\em J. Comput. Phys.}, 152(2):756--789, 1999.

\bibitem{BHR2009}
C.~J. Budd, W.~Huang, and R.~D. Russell.
\newblock Adaptivity with moving grids.
\newblock {\em Acta Numer.}, 18:111--241, 2009.

\bibitem{Bu1983}
T.~A. Burton.
\newblock {\em Volterra integral and differential equations}, volume 202 of
  {\em Mathematics in Science and Engineering}.
\newblock Elsevier B. V., Amsterdam, second edition, 2005.

\bibitem{Ca2003}
T.~Cazenave.
\newblock {\em Semilinear {S}chr\"odinger equations}, volume~10 of {\em Courant
  Lecture Notes in Mathematics}.
\newblock New York University, Courant Institute of Mathematical Sciences, New
  York; American Mathematical Society, Providence, RI, 2003.

\bibitem{DG2009}
A.~Ditkowski and N.~Gavish.
\newblock A grid redistribution method for singular problems.
\newblock {\em J. Comput. Phys.}, 228(7):2354--2365, 2009.

\bibitem{F2015}
G.~Fibich.
\newblock {\em The nonlinear {S}chr\"odinger equation}, volume 192 of {\em
  Applied Mathematical Sciences}.
\newblock Springer, Cham, 2015.
\newblock Singular solutions and optical collapse.

\bibitem{FGW2005}
G.~Fibich, N.~Gavish, and X.-P. Wang.
\newblock New singular solutions of the nonlinear {S}chr\"odinger equation.
\newblock {\em Phys. D}, 211(3-4):193--220, 2005.

\bibitem{FGW2007}
G.~Fibich, N.~Gavish, and X.-P. Wang.
\newblock Singular ring solutions of critical and supercritical nonlinear
  {S}chr\"odinger equations.
\newblock {\em Phys. D}, 231(1):55--86, 2007.

\bibitem{FMR2006}
G.~Fibich, F.~Merle, and P.~Rapha\"el.
\newblock Proof of a spectral property related to the singularity formation for
  the {$L^2$} critical nonlinear {S}chr\"odinger equation.
\newblock {\em Phys. D}, 220(1):1--13, 2006.

\bibitem{FP1998}
G.~Fibich and G.~Papanicolaou.
\newblock A modulation method for self-focusing in the perturbed critical
  nonlinear {S}chr\"odinger equation.
\newblock {\em Phys. Lett. A}, 239(3):167--173, 1998.

\bibitem{FP2000}
G.~Fibich and G.~Papanicolaou.
\newblock Self-focusing in the perturbed and unperturbed nonlinear
  {S}chr\"odinger equation in critical dimension.
\newblock {\em SIAM J. Appl. Math.}, 60(1):183--240, 2000.

\bibitem{GV1979aa}
J.~Ginibre and G.~Velo.
\newblock On a class of nonlinear {S}chr\"odinger equations. {I}. {T}he
  {C}auchy problem, general case.
\newblock {\em J. Funct. Anal.}, 32(1):1--32, 1979.

\bibitem{HR2007}
J.~Holmer and S.~Roudenko.
\newblock On blow-up solutions to the 3{D} cubic nonlinear {S}chr\"odinger
  equation.
\newblock {\em Appl. Math. Res. Express. AMRX}, (1):Art. ID abm004, 31, 2007.

\bibitem{KL1995}
N.~Kopell and M.~Landman.
\newblock Spatial structure of the focusing singularity of the nonlinear
  {S}chr\"odinger equation: a geometrical analysis.
\newblock {\em SIAM J. Appl. Math.}, 55(5):1297--1323, 1995.

\bibitem{KSZ1991}
S.~V. Kosmatov, N.E. and V.~Zakharov.
\newblock Computer simulation of wave collapses in the nonlinear
  {S}chr\"odinger equation.
\newblock {\em Physica D}, 52:16--35, 1991.

\bibitem{LPSS1988}
M.~J. Landman, G.~C. Papanicolaou, C.~Sulem, and P.-L. Sulem.
\newblock Rate of blowup for solutions of the nonlinear {S}chr\"odinger
  equation at critical dimension.
\newblock {\em Phys. Rev. A (3)}, 38(8):3837--3843, 1988.

\bibitem{LPSSW1991}
M.~J. Landman, G.~C. Papanicolaou, C.~Sulem, P.-L. Sulem, and X.~P. Wang.
\newblock Stability of isotropic singularities for the nonlinear schr\"odinger
  equation.
\newblock {\em Phys. D}, 47(3):393--415, 1991.

\bibitem{LePSS1987}
B.~LeMesurier, G.~Papanicolaou, C.~Sulem, and P.-L. Sulem.
\newblock The focusing singularity of the nonlinear {S}chr\"odinger equation.
\newblock In {\em Directions in partial differential equations ({M}adison,
  {WI}, 1985)}, volume~54 of {\em Publ. Math. Res. Center Univ. Wisconsin},
  pages 159--201. Academic Press, Boston, MA, 1987.

\bibitem{LePSS1988a}
B.~J. LeMesurier, G.~Papanicolaou, C.~Sulem, and P.-L. Sulem.
\newblock Focusing and multi-focusing solutions of the nonlinear
  {S}chr\"{o}dinger equation.
\newblock {\em Phys. D}, 31(1):78--102, 1988.

\bibitem{LePSS1988}
B.~J. LeMesurier, G.~C. Papanicolaou, C.~Sulem, and P.-L. Sulem.
\newblock Local structure of the self-focusing singularity of the nonlinear
  {S}chr\"odinger equation.
\newblock {\em Phys. D}, 32(2):210--226, 1988.

\bibitem{MPSS1986}
D.~McLaughlin, G.~Papanicolaou, C.~Sulem, and P.~Sulem.
\newblock Focusing singularity of the cubic schr\"odinger equation.
\newblock {\em Physical Review A}, 34:1200--1210, 1986.

\bibitem{Me1993}
F.~Merle.
\newblock Determination of blow-up solutions with minimal mass for nonlinear
  {S}chr\"odinger equations with critical power.
\newblock {\em Duke Math. J.}, 69(2):427--454, 1993.

\bibitem{MR2003}
F.~Merle and P.~Raphael.
\newblock Sharp upper bound on the blow-up rate for the critical nonlinear
  {S}chr\"odinger equation.
\newblock {\em Geom. Funct. Anal.}, 13(3):591--642, 2003.

\bibitem{MR2005}
F.~Merle and P.~Raphael.
\newblock Profiles and quantization of the blow up mass for critical nonlinear
  {S}chr\"odinger equation.
\newblock {\em Comm. Math. Phys.}, 253(3):675--704, 2005.

\bibitem{MRS2010}
F.~Merle, P.~Rapha\"el, and J.~Szeftel.
\newblock Stable self-similar blow-up dynamics for slightly $l^2$
  super-critical nls equations.
\newblock {\em Geom. Funct. Anal.}, 20(4):1028--1071, 2010.

\bibitem{Pe2001}
G.~Perelman.
\newblock On the blow up phenomenon for the critical nonlinear {S}chr\"odinger
  equation in 1{D}.
\newblock In {\em Nonlinear dynamics and renormalization group ({M}ontreal,
  {QC}, 1999)}, volume~27 of {\em CRM Proc. Lecture Notes}, pages 147--164.
  Amer. Math. Soc., Providence, RI, 2001.

\bibitem{Pe2001b}
G.~Perelman.
\newblock On the formation of singularities in solutions of the critical
  nonlinear {S}chr\"odinger equation.
\newblock {\em Ann. Henri Poincar\'e}, 2(4):605--673, 2001.

\bibitem{Ra2005}
P.~Raphael.
\newblock Stability of the log-log bound for blow up solutions to the critical
  non linear {S}chr\"odinger equation.
\newblock {\em Math. Ann.}, 331(3):577--609, 2005.

\bibitem{RW2000}
W.~Ren and X.-P. Wang.
\newblock An iterative grid redistribution method for singular problems in
  multiple dimensions.
\newblock {\em J. Comput. Phys.}, 159(2):246--273, 2000.

\bibitem{KR-2002}
V.~Rottsch\"{a}fer and T.~J. Kaper.
\newblock Blowup in the nonlinear {S}chr\"{o}dinger equation near critical
  dimension.
\newblock {\em J. Math. Anal. Appl.}, 268(2):517--549, 2002.

\bibitem{KR-2003}
V.~Rottsch\"{a}fer and T.~J. Kaper.
\newblock Geometric theory for multi-bump, self-similar, blowup solutions of
  the cubic nonlinear {S}chr\"{o}dinger equation.
\newblock {\em Nonlinearity}, 16(3):929--961, 2003.

\bibitem{STL2011}
J.~Shen, T.~Tang, and L.-L. Wang.
\newblock {\em Spectral methods}, volume~41 of {\em Springer Series in
  Computational Mathematics}.
\newblock Springer, Heidelberg, 2011.
\newblock Algorithms, analysis and applications.

\bibitem{SS1999}
C.~Sulem and P.-L. Sulem.
\newblock {\em The nonlinear {S}chr\"odinger equation}, volume 139 of {\em
  Applied Mathematical Sciences}.
\newblock Springer-Verlag, New York, 1999.
\newblock Self-focusing and wave collapse.

\bibitem{Tr2000}
L.~N. Trefethen.
\newblock {\em Spectral methods in {MATLAB}}, volume~10 of {\em Software,
  Environments, and Tools}.
\newblock Society for Industrial and Applied Mathematics (SIAM), Philadelphia,
  PA, 2000.

\bibitem{Wang1990}
X.~P. Wang.
\newblock {\em On singular solutions of the nonlinear {S}chroedinger and
  {Z}akharov equations}.
\newblock ProQuest LLC, Ann Arbor, MI, 1990.
\newblock Thesis (Ph.D.)--New York University.

\bibitem{We1989}
M.~I. Weinstein.
\newblock The nonlinear {S}chr\"odinger equation---singularity formation,
  stability and dispersion.
\newblock In {\em The connection between infinite-dimensional and
  finite-dimensional dynamical systems ({B}oulder, {CO}, 1987)}, volume~99 of
  {\em Contemp. Math.}, pages 213--232. Amer. Math. Soc., Providence, RI, 1989.

\bibitem{RYZ2017}
K.~Yang, S.~Roudenko, and Y.~Zhao.
\newblock Blow-up dynamics and spectral property in the {$L^2$}-critical
  nonlinear {S}chr\"{o}dinger equation in high dimensions.
\newblock {\em Nonlinearity}, 31(9):4354--4392, 2018.

\bibitem{ZK1986}
V.~Zakharov and E.~Kuznetsov.
\newblock Quasi-classical theory of three-dimensional wave collapse.
\newblock {\em Zh. Eksp. Teor. Fiz.}, 91:1310--1324, 1986.

\bibitem{Za1984}
V.~E. Zakharov.
\newblock Collapse and self-focusing of langmuir waves.
\newblock In A.~A. {Galeev} and R.~N. {Sudan}, editors, {\em Basic Plasma
  Physics: Selected Chapters, Handbook of Plasma Physics, Volume 1}, page~81,
  1984.

\end{thebibliography}
\bibliographystyle{abbrv}

\end{document}